\numberwithin{equation}{section}
\newcommand{\R}{\mathbb{R}}
\newcommand{\Z}{\mathbb{Z}}
\newcommand{\T}{\mathbb{T}}
\newcommand{\A}{\mathcal{A}}
\newcommand{\E}{\mathcal{E}}
\newcommand{\D}{\mathcal{D}}
\newcommand{\bu}{\bm{u}}
\newcommand{\bx}{\bm{x}}
\newcommand{\X}{\bm{X}}
\newcommand{\be}{\bm{e}}
\newcommand{\bg}{\bm{g}}
\newcommand{\bv}{\bm{v}}
\newcommand{\bz}{\bm{z}}
\newcommand{\p}{\partial}
\renewcommand{\div}{{\rm{div}\,}}
\newcommand{\abs}[1]{\left\lvert #1 \right\rvert}
\newcommand{\norm}[1]{\left\lVert #1 \right\rVert}
\newcommand{\wh}[1]{\widehat{#1}}
\newcommand{\wt}[1]{\widetilde{#1}}
\newcommand{\mc}[1]{\mathcal{#1}}
\newtheorem{theorem}{Theorem}[section]
\newtheorem{lemma}[theorem]{Lemma}
\newtheorem{proposition}[theorem]{Proposition}
\newtheorem{corollary}[theorem]{Corollary}
\newtheorem{definition}[theorem]{Definition}
\theoremstyle{definition}
\newtheorem{remark}[theorem]{Remark}
\begin{document}
\title{An error bound for the slender body approximation of a thin, rigid fiber sedimenting in Stokes flow}
\author{Yoichiro Mori, Laurel Ohm
\footnote{This research was supported in part by NSF grant DMS-1620316, DMS-1516978, and DMS-1907583, awarded to Y.M., and a Torske Klubben Fellowship, awarded to L.O. L.O. also thanks Dallas Albritton for helpful discussion. }\\ 
\textit{\small School of Mathematics, University of Minnesota, Minneapolis, MN 55455}}
\date{}

\maketitle

%%%%%%%%%%
\begin{abstract}
We investigate the motion of a thin rigid body in Stokes flow and the corresponding slender body approximation used to model sedimenting fibers. In particular, we derive a rigorous error bound comparing a regularized version of the rigid slender body approximation to the classical PDE for rigid motion in the case of a closed loop with constant radius. Our main tool is the slender body PDE framework established by the authors and D. Spirn in \cite{closed_loop,free_ends}, which we adapt to the rigid setting.
\end{abstract}

%%%%%%%%%%%%%%%%%%%%%%%%%%%%%%%%%%%%%%%%%%%%%%%%%%%%%%%%%%%%%%
\section{Introduction}
Determining the motion of a three-dimensional rigid body sedimenting in a Stokesian fluid is an important problem in both theoretical and computational fluid mechanics. This motion is described by a classical PDE \cite{corona2017integral,galdi1999steady,weinberger1972variational}, which we write below in the case of a thin rigid body. We use $\E(\bu) = \frac{1}{2}(\nabla\bu+(\nabla\bu)^{\rm T})$ to denote the symmetric gradient, and $\bm{\sigma}=\bm{\sigma}(\bu,p) = 2\E(\bu)-p{\bf I}$ to denote the stress tensor. Let $\Sigma_\epsilon$ denote a closed loop slender body of radius $\epsilon>0$ (to be made precise in Section \ref{geometry}) and let $\Omega_\epsilon =\R^3 \setminus \overline{\Sigma_\epsilon}$ and $\Gamma_\epsilon=\p\Sigma_\epsilon$ (see Figure \ref{fig:coord_sys}). For simplicity, we take the center of mass of the body to be at the origin. The full PDE description of a slender body undergoing a rigid motion in Stokes flow may be written as follows:
\begin{equation}\label{rigid}
\begin{aligned}
-\Delta \bu^{\rm r} +\nabla p^{\rm r} &=0   \hspace{2.6cm} \text{ in } \Omega_\epsilon \\
\div \bu^{\rm r} &= 0  \hspace{2.6cm} \text{ in } \Omega_\epsilon \\
\bu^{\rm r}(\bx) &= \bv^{\rm r} + \bm{\omega}^{\rm r}\times \bx, \qquad \bx \in \Gamma_\epsilon \\
\bu^{\rm r}(\bx) &\to 0  \hspace{2.6cm} \text{as }\abs{\bx}\to \infty 
\end{aligned}
\end{equation}
and
\begin{align*}
\int_{\Gamma_\epsilon} \bm{\sigma}^{\rm r}\bm{n} \; dS &= \bm{F}, \quad \int_{\Gamma_\epsilon} \bx \times (\bm{\sigma}^{\rm r}\bm{n}) \; dS = \bm{T} .
\end{align*}
We are interested in the \emph{mobility problem} \cite{corona2017integral}, where the total force $\bm{F}\in \R^3$ and torque $\bm{T}\in \R^3$ are given and we solve for the linear velocity $\bv^{\rm r}\in \R^3$ and angular velocity $\bm{\omega}^{\rm r}\in\R^3$ of the body. Note that the boundary value problem \eqref{rigid} is in fact valid for a rigid body of arbitrary shape, but for the purposes of this paper we specifically consider here a slender closed loop. Using the variational framework of \cite{galdi1999steady,gonzalez2004dynamics,weinberger1972variational}, it can be shown that \eqref{rigid} is a well-posed PDE.  \\

On the computational side, there has been much recent interest in numerical simulations of rigid particle sedimentation \cite{guazzelli2006sedimentation, guazzelli2011fluctuations}, and various tools have been developed to facilitate these simulations \cite{corona2017integral, jung2006periodic, mitchell2015sedimentation}. \\

%%%

\begin{figure}[!h]
\centering
\includegraphics[scale=0.6]{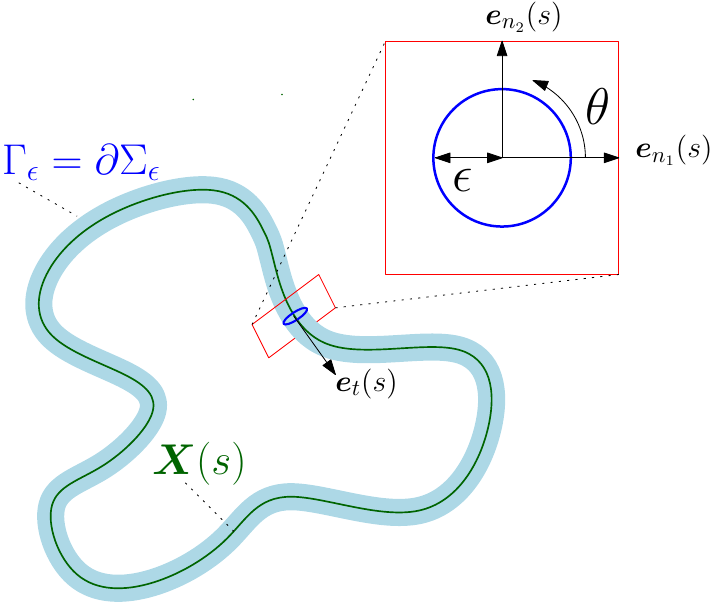}\\
\caption{The geometry of the rigid fiber may be parameterized with respect to the orthogonal frame $\be_t(s)$, $\be_{n_1}(s)$, $\be_{n_2}(s)$ defined in Section \ref{geometry}.}
\label{fig:coord_sys}
\end{figure}

For a thin rigid body, a commonly-used tool for simplifying simulations is slender body theory, which exploits the thin geometry of the body by approximating the filament as a one-dimensional force density distributed along the fiber centerline. Slender body theory is a popular method for modeling sedimentation of thin fibers, both rigid \cite{ butler2002dynamic, park2010cloud, saintillan2005smooth, shin2009structure} and semi-flexible \cite{li2013sedimentation,manikantan2014instability}. Here we will specifically consider the slender body theory established by Keller and Rubinow \cite{keller1976slender} and further developed in \cite{gotz2000interactions, johnson1980improved, tornberg2004simulating}. This slender body theory is derived by integrating the fundamental solution to the Stokes equations (the Stokeslet) and higher order corrections along the fiber centerline, yielding an expression valid only away from the fiber centerline. A limiting expression valid on the centerline itself \eqref{SBT_expr} is then obtained via a matched asymptotic expansion.\\

Let $\X : \T\equiv \R / \Z \to \R^3$ denote the coordinates of the slender body centerline, parameterized by arclength $s$ and defined more precisely in Section \ref{geometry}. Given a line force density $\bm{f}(s)$, $s\in \T$, the slender body approximation yields a direct expression approximating the velocity of the fiber, given by \cite{shelley2000stokesian}:
\begin{equation}\label{SBT_expr}
\begin{aligned}
\bu^{\rm s}_{\rm C}(s) &= \bm{\Lambda}[\bm{f}](s) + \bm{K}[\bm{f}](s),\\
\bm{\Lambda}[\bm{f}](s) &:=  \frac{1}{8\pi}\big[({\bf I}- 3\be_t\be_t^{\rm T})-2({\bf I}+\be_t\be_t^{\rm T}) \log(\pi\epsilon/4) \big]{\bm f}(s) \\
\bm{K}[\bm{f}](s) &:= \frac{1}{8\pi}\int_{\T} \left[ \left(\frac{{\bf I}}{|\bm{R}_0|}+ \frac{\bm{R}_0\bm{R}_0^{\rm T}}{|\bm{R}_0|^3}\right){\bm f}(s') - \frac{{\bf I}+\be_t(s)\be_t(s)^{\rm T} }{|\sin (\pi(s-s'))/\pi|} {\bm f}(s)\right] \, ds'.
\end{aligned}
\end{equation}
Here $\be_t(s)$ is the unit tangent vector to $\X(s)$ and $\bm{R}_0(s,s') = \X(s) - \X(s')$. The slender body approximation generally allows for bending and flexing of the filament along its centerline and requires specifying the one-dimensional force density over the length of the fiber centerline. If the fiber is constrained to be fully rigid, only the total force $\bm{F}$ and torque $\bm{T}$ must be specified, where 
\begin{equation}\label{SBT_cond1}
 \int_{\T} \bm{f}(s) \, ds = \bm{F}, \quad \int_{\T} \X(s)\times \bm{f}(s) \, ds = \bm{T}.
 \end{equation}
These constraints give rise to a system of integral equations which must be solved to obtain the line force density along the slender body (see \cite{gustavsson2009gravity, tornberg2006numerical}; also \cite{ butler2002dynamic, park2010cloud, saintillan2005smooth, shin2009structure}). However, these integral equations are ill-posed. Specifically, a detailed spectral analysis by G\"otz \cite{gotz2000interactions} in the case of a straight slender body centerline shows that the slender body operator $(\bm{\Lambda}+\bm{K})$ in \eqref{SBT_expr} is not invertible for all small $\epsilon$. A similar result for a perfectly circular, planar centerline was shown by Shelley-Ueda in \cite{shelley2000stokesian}. For fibers with more general centerline curvature, a spectral analysis of the slender body integral operator is complicated, but it is expected that the ill-posedness remains.  \\

In practice, this ill-posedness is addressed by regularizing the integral operator $\bm{K}$ to remove the invertibility issues encountered with $(\bm{\Lambda}+\bm{K})$. Various regularizations are possible; see Section \ref{regulars} for one example. Therefore, to analyze the error in the rigid slender body approximation, we will instead consider the regularized expression 
\begin{equation}\label{SBT_reg}
\bu^{\rm s}_\text{reg}(s) = \bm{\Lambda}[\bm{f}](s) + \bm{K}[\bm{f}](s) + r_\epsilon[\bm{f}](s),
\end{equation}
where $r_\epsilon$ is a regularization such that the operator $(\bm{\Lambda}+\bm{K}+r_\epsilon)$ \emph{is} invertible. The regularization $r_\epsilon$ may be effected, for example, by regularizing the integral kernel $\bm{K}$: 
\begin{equation}\label{Kreg0} 
\begin{aligned}
r_\epsilon  &= \bm{K}_{\rm reg} - \bm{K}; \\
\bm{K}_\text{reg}[\bm{f}^{\rm s}](s)  &=  \frac{1}{4\pi}\big[\be_t\be_t^{\rm T}+({\bf I}+\be_t\be_t^{\rm T})\log(\pi\epsilon/4) \big]{\bm f}^{\rm s}(s) \\
&\quad + \frac{1}{8\pi}\int_{\T} \left(\frac{{\bf I}}{(|\bm{R}_0|^2+\epsilon^2)^{1/2}}+ \frac{\bm{R}_0\bm{R}_0^{\rm T}}{|\bm{R}_0|^2(|\bm{R}_0|^2+\epsilon^2)^{1/2}}\right){\bm f}^{\rm s}(s') \, ds'.
\end{aligned}
\end{equation}
This choice of regularization is explored in more detail in Section \ref{regulars}. Various other regularizations are possible, including numerical truncation of the continuous integral operator $\bm{K}$. To include room for other possible regularizations, we leave the particular form of $r_\epsilon$ unspecified for much of the analysis. However the regularization is chosen, the idea is that $r_\epsilon$ should be small in terms of $\epsilon$ so that \eqref{SBT_reg} is close to the expression \eqref{SBT_expr}. In particular, the $\bm{K}_{\rm reg}$ example explored in Section \ref{regulars} satisfies $\big\| r_\epsilon[\bm{f}^{\rm s}] \big\|_{L^2(\T)}\le C\epsilon\abs{\log\epsilon}\|\bm{f}^{\rm s}\|_{C^1(\T)}$. \\

Combined with the conditions \eqref{SBT_cond1} and the constraint that the velocity of the slender body centerline is a rigid motion, i.e.
\begin{equation}\label{SBT_cond2}
\bu^{\rm s}_\text{reg}(s) = \bv^{\rm s}+\bm{\omega}^{\rm s}\times \X(s), \quad  \bv^{\rm s}, \bm{\omega}^{\rm s} \in \R^3,
\end{equation}
 the regularized expression \eqref{SBT_reg} with an appropriate choice of $r_\epsilon$ likely gives rise to a well-posed rigid slender body approximation. A general solution theory for such regularized equations will require a detailed spectral analysis of the regularized integral operator, which is beyond the scope of this paper. \\

%%%%%%%%

Instead, the aim of this paper is to establish an {\it a posteriori} error bound between the regularized slender body approximation for rigid motion in Stokes flow \eqref{SBT_cond1}-\eqref{SBT_cond2} and the classical PDE \eqref{rigid}. We take for granted that the regularized slender body approximation \eqref{SBT_reg} gives rise to $\bm{f}^{\rm s}\in C^1(\T)$ satisfying \eqref{SBT_cond1} and \eqref{SBT_cond2}. This $\bm{f}^{\rm s}$ must then appear in the final error bound, giving rise to a type of {\it a posteriori} error estimate, similar to the type of estimates commonly used in finite element analysis \cite{ainsworth2011posteriori}. To obtain an {\it a priori} bound, we would need a general solution theory for  \eqref{SBT_cond1}-\eqref{SBT_cond2} to then able to say that such an $\bm{f}^{\rm s}$ is then bounded by the given $\bm{F}$ and $\bm{T}$. We show the following theorem. 
\begin{theorem}\label{rigid_theorem}
Let $\Sigma_\epsilon$ be a slender body as defined in Section \ref{geometry}. Suppose the total force $\bm{F}\in \R^3$ and torque $\bm{T}\in \R^3$ are given, and assume that regularized rigid slender body approximation \eqref{SBT_cond1}-\eqref{SBT_cond2} is satisfied by some $\bm{f}^{\rm s}\in C^1(\T)$. Then the difference $\bv^{\rm r}-\bv^{\rm s}$, $\bm{\omega}^{\rm r}-\bm{\omega}^{\rm s}$ between the linear and angular velocities of true rigid motion \eqref{rigid} and the regularized slender body approximation \eqref{SBT_cond1}-\eqref{SBT_cond2} satisfies 
\begin{equation}\label{rigid_err}
\abs{\bv^{\rm r}-\bv^{\rm s}} + \abs{\bm{\omega}^{\rm r}-\bm{\omega}^{\rm s}} \le C\bigg(\sqrt{\epsilon}\abs{\log\epsilon}^{3/2}\big(\norm{\bm{f}^{\rm s}}_{C^1(\T)}+ \abs{\bm{F}}+\abs{\bm{T}}\big) + \epsilon^{-1/2}\big\| r_\epsilon[\bm{f}^{\rm s}] \big\|_{L^2(\T)}\bigg)
\end{equation}
for $C$ depending on $c_\Gamma$, $\kappa_{\max}$, and $\xi_{\max}$. 
\end{theorem}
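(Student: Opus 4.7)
The plan is to manufacture from the line force $\bm{f}^{\rm s}$ an explicit Stokes flow $(\bu^{\rm s},p^{\rm s})$ on $\Omega_\epsilon$, prove that its trace on $\Gamma_\epsilon$ is approximately the rigid motion $\bv^{\rm s}+\bm{\omega}^{\rm s}\times\bx$ prescribed by \eqref{SBT_cond2}, and then recover the velocity/angular-velocity error from an energy estimate for $\bw := \bu^{\rm r}-\bu^{\rm s}$. The natural comparison field is the centerline Stokeslet distribution
\[
\bu^{\rm s}(\bx) := \frac{1}{8\pi}\int_{\T}\left(\frac{\bf I}{|\bx-\X(s')|} + \frac{(\bx-\X(s'))(\bx-\X(s'))^{\rm T}}{|\bx-\X(s')|^3}\right)\bm{f}^{\rm s}(s')\,ds',
\]
with the matching Stokeslet pressure. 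This pair solves Stokes in $\R^3\setminus\X(\T)$, decays at infinity, and by \eqref{SBT_cond1} carries exactly $(\bm{F},\bm{T})$ through any surface enclosing the fiber; hence $\bw$ automatically has vanishing total force and torque on $\Gamma_\epsilon$.

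The core analytic step, and the principal obstacle, is a sharp pointwise asymptotic expansion of $\bu^{\rm s}$ on $\Gamma_\epsilon$. For $\bx = \X(s)+\epsilon(\cos\theta\,\be_{n_1}(s)+\sin\theta\,\be_{n_2}(s))$, I would split the integral into a local part $|s-s'|\lesssim\sqrt\epsilon$ and a nonlocal part and Taylor-expand $\X$ and $\bm{f}^{\rm s}$ about $s' = s$; the local part reproduces the Keller--Rubinow contribution $\bm{\Lambda}[\bm{f}^{\rm s}](s)$ after an explicit inner integration over $s'$, while the nonlocal part is $\bm{K}[\bm{f}^{\rm s}](s)$ up to a curvature-controlled remainder. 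Carrying out this expansion with \emph{$C^1$-sharp} tracking of $\bm{f}^{\rm s}$ (no spurious higher-derivative norms) and with remainder constants depending only on $c_\Gamma,\kappa_{\max},\xi_{\max}$ is where the real work lies. Combining the result with \eqref{SBT_cond2}, and absorbing the $\bm{\omega}^{\rm s}\times\epsilon(\cdot)$ shift incurred by moving from the centerline to $\Gamma_\epsilon$ via the slender-body resistance bound $|\bv^{\rm s}|+|\bm{\omega}^{\rm s}|\lesssim|\log\epsilon|(|\bm{F}|+|\bm{T}|)$, gives the residual
\[
\bm{r}_\epsilon := \bu^{\rm s}|_{\Gamma_\epsilon} - (\bv^{\rm s}+\bm{\omega}^{\rm s}\times\bx),
\]
with $\|\bm{r}_\epsilon\|_{L^2(\Gamma_\epsilon)}$ small, carrying a $|\log\epsilon|^{1/2}\|\bm{f}^{\rm s}\|_{C^1}+|\bm{F}|+|\bm{T}|$ constant.

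To close, note that $(\bw,q):=(\bu^{\rm r}-\bu^{\rm s},p^{\rm r}-p^{\rm s})$ is a decaying Stokes field in $\Omega_\epsilon$ with boundary data $\tilde{\bv}+\tilde{\bm{\omega}}\times\bx-\bm{r}_\epsilon$, where $\tilde{\bv}=\bv^{\rm r}-\bv^{\rm s}$ and $\tilde{\bm{\omega}}=\bm{\omega}^{\rm r}-\bm{\omega}^{\rm s}$, and with zero net force and torque. Integration by parts gives
\[
2\int_{\Omega_\epsilon}|\E(\bw)|^2\,d\bx = \int_{\Gamma_\epsilon}\bm{r}_\epsilon\cdot(\bm{\sigma}(\bw)\bm{n})\,dS,
\]
since the rigid-motion portion of the trace pairs against $\bm{\sigma}(\bw)\bm{n}$ to give zero by the vanishing-force-and-torque property. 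Cauchy--Schwarz together with a trace/duality estimate adapted to the slender domain controls the right-hand side by $\|\bm{r}_\epsilon\|_{L^2(\Gamma_\epsilon)}$ and $\|\E(\bw)\|_{L^2(\Omega_\epsilon)}$. Finally, splitting $\bw = \bw_0+\bw_1$ with $\bw_0$ the Stokes field generated by the rigid boundary data $\tilde{\bv}+\tilde{\bm{\omega}}\times\bx$ and using the slender-body resistance lower bound $\|\E(\bw_0)\|_{L^2}^2 \gtrsim |\log\epsilon|^{-1}(|\tilde{\bv}|^2+|\tilde{\bm{\omega}}|^2)$ converts the energy estimate into the bound \eqref{rigid_err}, with the total $|\log\epsilon|^{3/2}$ accumulating from the resistance-lower-bound loss and the slender-geometry trace inequality.
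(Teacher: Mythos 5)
Your proposal takes a genuinely different route from the paper --- you try to compare the true rigid solution $\bu^{\rm r}$ directly to an explicit centerline singularity distribution, whereas the paper routes through two intermediary boundary value problems (the rigid slender body PDE \eqref{SB_PDE} and the force-prescribed PDE \eqref{SB_PDE_ps}), chaining Lemmas~\ref{true_vs_SB}, \ref{ups_up_err}, and the known estimate \eqref{centerline_err}. However, there are two concrete gaps.

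First, the ansatz $\bu^{\rm s}$ as a pure Stokeslet distribution does \emph{not} reproduce the Keller--Rubinow expression $\bm{\Lambda}[\bm{f}^{\rm s}]+\bm{K}[\bm{f}^{\rm s}]$ on $\Gamma_\epsilon$. Writing $\bm{R}\approx \overline s\,\be_t+\epsilon\be_\rho$ near the diagonal, the term $\bm{R}\bm{R}^{\rm T}/|\bm{R}|^3$ produces an $O(1)$, $\theta$-dependent contribution proportional to $\be_\rho\be_\rho^{\rm T}$; in the Keller--Rubinow derivation this is cancelled by adding the $\epsilon^2$-weighted source-doublet kernel. Without the doublet, the local expansion does not yield the $\theta$-independent matrix $({\bf I}-3\be_t\be_t^{\rm T})-2({\bf I}+\be_t\be_t^{\rm T})\log(\pi\epsilon/4)$, and your claim that the local integral ``reproduces $\bm{\Lambda}[\bm{f}^{\rm s}](s)$'' is false. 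This can be repaired (and the doublets carry no net force or torque, so your $\bw$ still has vanishing totals), but then the ``core analytic step'' you defer is essentially the content of the asymptotic machinery behind \eqref{centerline_err}, now needed in $L^2(\Gamma_\epsilon)$ rather than $L^2(\T)$.

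Second, and more seriously, the closing estimate
\[
2\int_{\Omega_\epsilon}|\E(\bw)|^2\,d\bx=\int_{\Gamma_\epsilon}\bm{r}_\epsilon\cdot(\bm{\sigma}(\bw)\bm{n})\,dS
\]
cannot be bounded by Cauchy--Schwarz against $\norm{\bm{r}_\epsilon}_{L^2(\Gamma_\epsilon)}\norm{\E(\bw)}_{L^2(\Omega_\epsilon)}$: the normal traction $\bm{\sigma}(\bw)\bm{n}$ of a $D^{1,2}$ Stokes field lies only in $H^{-1/2}(\Gamma_\epsilon)$, so the pairing requires an $H^{1/2}(\Gamma_\epsilon)$ estimate on $\bm{r}_\epsilon$ (which you do not have) or an $L^2(\Gamma_\epsilon)$ bound on $\bm{\sigma}(\bw)\bm{n}$ (which requires $H^2$ regularity of $\bw$). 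This is exactly the obstruction the paper flags in the Remark following the variational estimate in Section~\ref{1stLemma}, and it is why the paper proves the $\epsilon$-quantified higher regularity estimate of Lemma~\ref{high_reg}, at the cost of a factor $1/\epsilon$. That factor is precisely what downgrades the naive $\epsilon|\log\epsilon|^{3/2}$ boundary error to the final $\sqrt\epsilon$ in the theorem, so it cannot be elided. In addition, the ``resistance bound'' $|\bv^{\rm s}|+|\bm{\omega}^{\rm s}|\lesssim|\log\epsilon|(|\bm{F}|+|\bm{T}|)$ you invoke is not available a priori --- the whole point of the a posteriori formulation is that one cannot control $\bm{f}^{\rm s}$ (and hence $\bv^{\rm s},\bm{\omega}^{\rm s}$) by $\bm{F},\bm{T}$ without solvability of the slender body integral equation; what one can say is $|\bv^{\rm s}|+|\bm{\omega}^{\rm s}|\lesssim|\log\epsilon|\norm{\bm{f}^{\rm s}}$, via \eqref{SBT_expr} and Lemma~\ref{v_omega_bd}.
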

The constants $c_\Gamma$, $\kappa_{\max}$, and $\xi_{\max}$ have to do only with the shape of the fiber centerline and are defined in Section \ref{geometry}. Note that in order to obtain a convergence result, the regularization $r_\epsilon$ must be chosen to be sufficiently small, e.g. such that $\big\| r_\epsilon[\bm{f}^{\rm s}] \big\|_{L^2(\T)}\le C\epsilon\abs{\log\epsilon}\|\bm{f}^{\rm s}\|_{C^1(\T)}$ (see Section \ref{regulars}). It may be possible to improve the $\sqrt{\epsilon}$ bound given a more complete solution theory for the slender body approximation.  \\

In order to prove Theorem \ref{rigid_theorem}, we introduce an intermediary PDE which we will call the {\it slender body PDE for rigid motion}. The idea follows from the notion of slender body PDE proposed by the authors and D. Spirn in \cite{closed_loop} and \cite{free_ends} as a framework for analyzing the error introduced by the Keller-Rubinow slender body approximation for closed-loop and open-ended fibers, respectively. To construct the rigid slender body PDE, we impose that the velocity of the slender body is uniform over each cross section $s$ of the fiber. In particular, we approximate $\bx \in \Gamma_\epsilon$ as its $L^2$ projection onto the fiber centerline $\X(s)$, thereby ignoring slight differences in torque across the slender body. Note that the slender body geometry is defined in Section \ref{geometry} such that this projection onto the fiber centerline is unique; i.e. the notion of ``fiber cross section" is well-defined. We define the slender body PDE for rigid motion as follows:
\begin{equation}\label{SB_PDE}
\begin{aligned}
-\Delta \bu^{\rm p} +\nabla p^{\rm p} &=0  \hspace{3.27cm} \text{in } \Omega_\epsilon  \\
 \div \bu^{\rm p} &=0  \hspace{3.27cm} \text{in } \Omega_\epsilon  \\
\bu^{\rm p}(\bx) &= \bv^{\rm p} + \bm{\omega}^{\rm p}\times \X(s) \qquad \text{on } \Gamma_\epsilon  \\
\bu^{\rm p}(\bx) &\to 0 \hspace{3.23cm} \text{as }\abs{\bx}\to \infty
\end{aligned}
\end{equation}
and
\begin{align*}
\int_{\Gamma_\epsilon} \bm{\sigma}^{\rm p}\bm{n} \; \mc{J}_\epsilon(s,\theta) \, d\theta \, ds &= \bm{F}, \qquad
\int_{\T} \X(s)\times \bigg(\int_0^{2\pi} \bm{\sigma}^{\rm p}\bm{n} \, \mc{J}_\epsilon(s,\theta) d\theta \bigg) ds = \bm{T}.
\end{align*}
Here we have written $dS = \mc{J}_\epsilon(s,\theta)\, d\theta\, ds$, where $\mc{J}_\epsilon$ is the Jacobian factor on the slender body surface, which we parameterize as a tube about $\X(s)$ using surface angle $\theta$ (see Section \ref{geometry} and expression \eqref{jac_fac}). We show that for a closed filament, the rigid slender body PDE is in fact close to the classical PDE for rigid motion \cite{galdi1999steady,weinberger1972variational} -- in particular, the variation in torque over any cross section of the slender body is higher order in $\epsilon$. \\

In the case of a flexible filament with a prescribed force density per unit length along the centerline, the slender body PDE of \cite{closed_loop,free_ends} is well-posed, and the difference between the slender body approximation and the PDE solution can be estimated in terms of the slender body radius and the given line force density. We aim to use the existing error analysis in \cite{closed_loop} to bound the difference between the rigid slender body approximation and the rigid slender body PDE solution. The rigid case is complicated by the fact that the existing error bound relies on knowledge of the line force density along the filament, while only $\bm{F}$ and $\bm{T}$ are specified. Below we outline our treatment of this and other complications arising in the proof of Theorem \ref{rigid_theorem}.

%%%%%%%%%%%%%%%%%%%%%%%%%%
%%%%%%%%%%%%%%%%%%%%%%%%%%
%%%%%%%%%%%%%%%%%%%%%%%%%%

\subsection{Outline of the proof of Theorem \ref{rigid_theorem}}
The strategy for proving Theorem \ref{rigid_theorem} is to show that, given $\bm{F}$ and $\bm{T}$, the solution to the rigid slender body PDE \eqref{SB_PDE} is close to both the classical rigid PDE solution \eqref{rigid} and the rigid slender body approximation \eqref{SBT_cond1} - \eqref{SBT_cond2}. \\

First, we must show that the rigid slender body PDE is well-posed. Using Definition \ref{rigid_weak_p} of a weak solution to the rigid slender body PDE \eqref{SB_PDE}, where the function space $\mc{R}_\epsilon^{\div}$ is as defined after \eqref{Repsilon}, we show the following. 
\begin{theorem}\label{SB_PDE_well}
Let $\Sigma_\epsilon$ be a slender body as defined in Section \ref{geometry}. Given $\bm{F}$ and $\bm{T}\in\R^3$, there exists a unique weak solution $(\bu^{\rm p},p^{\rm p})\in \mc{R}_\epsilon^{\div} \times L^2(\Omega_\epsilon)$ to the slender body PDE for rigid motion \eqref{SB_PDE} satisfying the estimate
\begin{equation}\label{totalPest}
\norm{\nabla\bu^{\rm p}}_{L^2(\Omega_\epsilon)} + \norm{p^{\rm p}}_{L^2(\Omega_\epsilon)} \le C\abs{\log\epsilon}^{1/2} ( \abs{\bm{F}}+\abs{\bm{T}} )
\end{equation}
for $C$ depending on $c_\Gamma$ and $\kappa_{\max}$.
\end{theorem}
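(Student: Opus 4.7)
The plan is to formulate \eqref{SB_PDE} variationally on the space $\mc{R}_\epsilon^{\div}$ of divergence-free, decaying-at-infinity velocity fields on $\Omega_\epsilon$ whose trace on $\Gamma_\epsilon$ has the rigid form $\bv + \bm{\omega}\times\X(s)$, and then apply Lax--Milgram. Testing \eqref{SB_PDE} against $\bw\in\mc{R}_\epsilon^{\div}$, using $\div\bw=0$ to absorb the pressure and the rigid structure of the trace to collapse the boundary integrals of $\bm{\sigma}^{\rm p}\bm{n}$ into a finite-dimensional Euclidean pairing with the given force and torque, yields the weak formulation
\begin{equation*}
a(\bu^{\rm p},\bw) \;:=\; \int_{\Omega_\epsilon} 2\,\E(\bu^{\rm p}):\E(\bw)\,d\bx \;=\; \bm{F}\cdot\bv_{\bw} + \bm{T}\cdot\bm{\omega}_{\bw} \;=:\; \ell(\bw).
\end{equation*}
The bilinear form $a$ is continuous in $\norm{\nabla\cdot}_{L^2(\Omega_\epsilon)}$ and coercive by Korn's inequality on the exterior domain combined with the decay at infinity, which eliminates global infinitesimal rigid motions.

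The only nontrivial ingredient is the continuity of $\ell$ with the correct $\epsilon$-scaling, equivalent to the trace-type estimate
\begin{equation*}
\abs{\bv_{\bw}} + \abs{\bm{\omega}_{\bw}} \;\le\; C\abs{\log\epsilon}^{1/2}\,\norm{\nabla\bw}_{L^2(\Omega_\epsilon)} \qquad \text{for every } \bw\in\mc{R}_\epsilon^{\div}.
\end{equation*}
This inequality encodes the slender-body resistance scaling: the minimum Dirichlet energy of a decaying divergence-free extension of a unit rigid motion on $\Gamma_\epsilon$ is of order $1/\abs{\log\epsilon}$. Granted the inequality, Lax--Milgram delivers a unique $\bu^{\rm p}\in\mc{R}_\epsilon^{\div}$ satisfying $\norm{\nabla\bu^{\rm p}}_{L^2(\Omega_\epsilon)} \le C\abs{\log\epsilon}^{1/2}(\abs{\bm{F}}+\abs{\bm{T}})$. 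The pressure $p^{\rm p}\in L^2(\Omega_\epsilon)$ is then recovered by testing the weak formulation against Bogovskii-type right inverses of $\div$ on bounded subdomains of $\Omega_\epsilon$, producing the matching bound in \eqref{totalPest}; uniqueness of $(\bu^{\rm p},p^{\rm p})$ follows from coercivity of $a$ and the closed-range property of $\div$.

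The main obstacle is the capacity-type trace inequality, and the plan is to establish it by direct analysis in the tubular coordinates $(s,\rho,\theta)$ from Section \ref{geometry}. For each fixed $(s,\theta)$ and each unit $\hat{\bm e}\in\R^3$, the scalar $w_{\hat{\bm e}}(\rho) := \bw(s,\rho,\theta)\cdot\hat{\bm e}$ has boundary value $w_{\hat{\bm e}}(\epsilon) = \bigl(\bv_\bw+\bm{\omega}_\bw\times\X(s)\bigr)\cdot\hat{\bm e}$, so the fundamental theorem of calculus paired with Cauchy--Schwarz against the measure $\rho\,d\rho$ on $[\epsilon,R]$ for a fixed $O(1)$ outer radius $R$ gives
\begin{equation*}
\abs{w_{\hat{\bm e}}(\epsilon) - w_{\hat{\bm e}}(R)}^2 \;\le\; \log(R/\epsilon)\int_\epsilon^R \abs{\partial_\rho w_{\hat{\bm e}}}^2\,\rho\,d\rho.
\end{equation*}
Averaging in $\theta\in[0,2\pi]$ and $s\in\T$ against the surface measure, extracting the six rigid parameters as linear functionals of the cross-sectional averages of the trace, and absorbing the tail term $\abs{w_{\hat{\bm e}}(R)}^2$ into $\norm{\nabla\bw}_{L^2(\Omega_\epsilon\setminus\text{tube})}^2$ via the Hardy inequality in $\R^3\setminus B_R$, produces the trace inequality with the claimed $\abs{\log\epsilon}^{1/2}$ scaling and constants depending only on $c_\Gamma$ and $\kappa_{\max}$ through the Jacobian $\mc{J}_\epsilon$, completing the proof.
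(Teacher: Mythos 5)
Your overall architecture matches the paper's: Lax--Milgram on $\mc{R}_\epsilon^{\div}$, coercivity via Korn, boundedness of the linear functional via a trace-type inequality that produces the $\abs{\log\epsilon}^{1/2}$, and a Bogovskii-type construction for the pressure. The radial-integration argument for the logarithmic trace bound is also essentially the paper's Lemma \ref{trace1}.

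The gap is in the step you describe as ``extracting the six rigid parameters as linear functionals of the cross-sectional averages of the trace.'' The radial integration and the Hardy tail control give you $\norm{\bv_\bw+\bm{\omega}_\bw\times\X}_{L^2(\T)} \le C\abs{\log\epsilon}^{1/2}\norm{\nabla\bw}_{L^2(\Omega_\epsilon)}$, but passing from the $L^2(\T)$ norm of the rigid-motion trace to the pointwise norms $\abs{\bv_\bw}+\abs{\bm{\omega}_\bw}$ is not a consequence of the Jacobian estimates or the tubular coordinates at all. It is a statement about the map $(\bv,\bm{\omega})\mapsto \bv+\bm{\omega}\times\X(\cdot)$ from $\R^6$ to $L^2(\T)$ being injective with a uniformly bounded inverse over the admissible class of centerlines, and this genuinely uses that $\X$ is a closed curve with nonvanishing curvature. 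For a straight centerline the estimate is false: taking $\bm{\omega}$ parallel to the tangent and $\bv=-\bm{\omega}\times\X(0)$ gives $\bv+\bm{\omega}\times\X\equiv 0$ with $(\bv,\bm{\omega})\neq 0$. The paper isolates this as Lemma \ref{v_omega_bd} and proves it by a compactness argument over the class of curves satisfying the constraints of Section \ref{geometry}, which is precisely what produces a constant depending only on $c_\Gamma$ and $\kappa_{\max}$. Your proposal needs either that compactness argument or an explicit quantitative lower bound on the Gram matrix $\int_\T (\bm{\omega}\times\X)\otimes(\bm{\omega}\times\X)\,ds$ in terms of $c_\Gamma$ and $\kappa_{\max}$; without one of these, the claimed uniformity of the extraction constant is unsupported and the proof is incomplete.
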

Theorem \ref{SB_PDE_well} can be established using many of the same tools from the well-posedness theory in \cite{closed_loop}. In addition, we will make use of the following bound along the slender body centerline $\X(s)$:
 \begin{lemma}\label{v_omega_bd}
 Let $\X$ be as in Section \ref{geometry} and consider constant vectors $\bv$, $\bm{\omega} \in \R^3$. Then
 \begin{equation}
 \abs{\bv}+\abs{\bm{\omega}} \le C\norm{\bv+ \bm{\omega}\times \X}_{L^2(\T)}
 \end{equation}
 for $C$ depending only on $c_\Gamma$ and $\kappa_{\max}$. 
 \end{lemma}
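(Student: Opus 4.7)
The map $T:\R^6 \to L^2(\T)$ given by $T(\bv,\bm{\omega}) := \bv + \bm{\omega}\times\X$ is linear with a finite-dimensional domain, so the desired lower bound is equivalent to injectivity of $T$, with the constant $C$ the reciprocal of the smallest singular value. My plan is to make this singular value quantitative by reducing to a $6 \times 6$ quadratic form whose positive definiteness comes from the non-degeneracy of the closed curve $\X$.

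First, I would let $\overline{\X} := \int_\T \X(s)\,ds$ denote the centroid, set $\wt{\X}(s) := \X(s) - \overline{\X}$, and introduce the shifted linear velocity $\bv' := \bv + \bm{\omega}\times\overline{\X}$. Since $\int_\T \wt{\X}\,ds = 0$, expanding the square and using that the cross term vanishes yields the identity
\begin{equation*}
\norm{\bv + \bm{\omega}\times \X}_{L^2(\T)}^2 \;=\; \abs{\bv'}^2 + \bm{\omega}^{\rm T} M \bm{\omega}, \qquad M := \int_\T \bigl(\abs{\wt{\X}}^2 {\bf I} - \wt{\X}\wt{\X}^{\rm T}\bigr)\,ds,
\end{equation*}
so the problem reduces to showing that the moment-of-inertia tensor $M$ of the centerline is positive definite with a quantitative lower bound.

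Qualitative positive definiteness is easy: if $\bm{\omega}^{\rm T} M\bm{\omega}=0$ for some $\bm{\omega}\neq 0$, then $\bm{\omega}\times\wt{\X}(s) = 0$ for all $s$, hence $\wt{\X}(s)\parallel\bm{\omega}$ and so $\be_t(s) = \wt{\X}'(s)\parallel\bm{\omega}$. Continuity of $\be_t$ together with the unit-length constraint $\abs{\be_t}=1$ then forces $\be_t \equiv \pm\bm{\omega}/\abs{\bm{\omega}}$, making $\X$ a straight segment and contradicting the closed-loop condition $\X(1)=\X(0)$. Assembling the lemma is then immediate: the identity gives $\abs{\bv'}^2 + \lambda_{\min}(M)\abs{\bm{\omega}}^2 \le \norm{\bv+\bm{\omega}\times\X}_{L^2(\T)}^2$, and the bound on $|\bv|$ follows from $\abs{\bv} \le \abs{\bv'} + \abs{\bm{\omega}}\abs{\overline{\X}}$ with $\abs{\overline{\X}}$ controlled by the fiber geometry.

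The main obstacle is the quantitative step: converting the qualitative non-degeneracy argument into an explicit lower bound on $\lambda_{\min}(M)$ in terms of $c_\Gamma$ and $\kappa_{\max}$. This is really a uniform non-degeneracy statement within the admissible class of centerlines from Section~\ref{geometry}, and it is at this point that the hypotheses on the minimum separation (encoded in $c_\Gamma$) and bounded curvature (encoded in $\kappa_{\max}$) must be invoked to rule out centerlines that come arbitrarily close to a straight segment. Once $\lambda_{\min}(M) \geq c(c_\Gamma,\kappa_{\max}) > 0$ is known, the estimate with the advertised dependence of $C$ on $c_\Gamma$ and $\kappa_{\max}$ is immediate.
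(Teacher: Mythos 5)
Your reduction to positive definiteness of the centerline's moment-of-inertia tensor $M = \int_\T \bigl(\abs{\wt\X}^2\mathbf{I} - \wt\X\wt\X^{\rm T}\bigr)\,ds$ is a clean reformulation, and the qualitative argument (a zero eigenvalue forces $\wt\X \parallel \bm{\omega}$, hence $\be_t \parallel \bm{\omega}$, hence $\X$ is a straight segment, contradicting closedness) is correct. However, you have identified but not actually closed the gap: the lemma lives or dies on the quantitative bound $\lambda_{\min}(M) \geq c(c_\Gamma,\kappa_{\max}) > 0$, and your proposal stops at ``once this is known, the rest is immediate.'' That is the entire content of the lemma; without it you have only a qualitative injectivity statement for a single fixed curve, with no control over how the constant depends on the geometry.

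The paper resolves precisely this point by a compactness-and-contradiction argument: it supposes a sequence $(\bv_k, \bm{\omega}_k, \X_k)$ with $\abs{\bv_k}^2 + \abs{\bm{\omega}_k}^2 = 1$ and $\norm{\bv_k + \bm{\omega}_k\times\X_k}_{L^2(\T)} \to 0$, extracts limits $(\bv_\infty,\bm{\omega}_\infty)$ and (via the uniform $C^2$ bound $\abs{\X_k''}\le\kappa_{\max}$ and Arzel\`a--Ascoli, after normalizing $\int_\T \X_k\,ds = 0$) a $C^1$-convergent subsequence $\X_k\to\X_\infty$, and derives $\bm{\omega}_\infty\times\X_\infty\equiv -\bv_\infty$, which contradicts $\X_\infty$ being a closed curve. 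You could deploy exactly the same compactness argument in your formulation, applied to a sequence of admissible curves $\X_k$ with $\lambda_{\min}(M_k)\to 0$; the $C^1$ limit curve $\X_\infty$ would have $\lambda_{\min}(M_\infty)=0$ yet still be closed, contradicting your qualitative step. The compactness argument is precisely how one invokes the hypotheses $c_\Gamma$, $\kappa_{\max}$ without producing an explicit constant, and it is the step your write-up is missing. One further small point to tidy: your bound $\abs{\bv}\le\abs{\bv'}+\abs{\bm{\omega}}\,\abs{\overline\X}$ requires $\abs{\overline\X}$ to be controlled, which is only true after fixing a normalization for the position of $\X$ in space (the paper normalizes $\int_\T\X\,ds=0$); you should make this explicit rather than appealing vaguely to ``the fiber geometry.''
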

 We will first prove Lemma \ref{v_omega_bd} in Section \ref{2ndLemma}; then Theorem \ref{SB_PDE_well} quickly follows using some of the key inequalities collected in Section \ref{ineq}. \\

With the variational framework for \eqref{SB_PDE}, comparing \eqref{rigid} to \eqref{SB_PDE} is relatively straightforward. Using Lemma \ref{v_omega_bd}, we show that the difference between the true rigid motion \eqref{rigid} and the slender body PDE description \eqref{SB_PDE} satisfies the following lemma.
\begin{lemma}\label{true_vs_SB}
Let $\X$ be as in Section \ref{geometry}. Given $\bm{F}$ and $\bm{T}\in \R^3$, let $(\bv^{\rm r},\bm{\omega}^{\rm r})$ be the corresponding boundary values satisfying \eqref{rigid} and let $(\bv^{\rm p},\bm{\omega}^{\rm p})$ be the boundary values satisfying \eqref{SB_PDE}. Then
\begin{equation}\label{true_vs_SB_bd}
\abs{\bm{\omega}^{\rm r}-\bm{\omega}^{\rm p}}+ \abs{\bv^{\rm r}-\bv^{\rm p}} \le \epsilon\abs{\log\epsilon}C( \abs{\bm{T}}+ \abs{\bm{F}})
\end{equation}
where $C$ depends only on $c_\Gamma$ and $\kappa_{\max}$. 
\end{lemma}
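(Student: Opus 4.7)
The plan is to use a reciprocal cross-testing of the weak formulations of \eqref{rigid} and \eqref{SB_PDE}. Both $\bu^{\rm r}$ and $\bu^{\rm p}$ are divergence-free $H^1$ Stokes solutions decaying at infinity, so each serves as an admissible test function for the other's system. Pairing each equation with the other's solution and using the symmetry of $\int_{\Omega_\epsilon}2\E(\bu^{\rm r}):\E(\bu^{\rm p})\,dx$ gives the boundary identity $\int_{\Gamma_\epsilon}\bm{\sigma}^{\rm r}\bm{n}\cdot\bu^{\rm p}\,dS = \int_{\Gamma_\epsilon}\bm{\sigma}^{\rm p}\bm{n}\cdot\bu^{\rm r}\,dS$. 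Substituting the rigid boundary data $\bu^{\rm r}|_{\Gamma_\epsilon}=\bv^{\rm r}+\bm{\omega}^{\rm r}\times\bx$ and $\bu^{\rm p}|_{\Gamma_\epsilon}=\bv^{\rm p}+\bm{\omega}^{\rm p}\times\X(s)$, the matching force constraints produce exact cancellation of all $\bm{F}$-terms, while the mismatch between the two torque conditions -- about $\bx$ in \eqref{rigid} versus about $\X(s)$ in \eqref{SB_PDE} -- leaves only residual first moments of $\bx-\X(s)$. Rearranging yields the identity
\begin{equation*}
(\bv^{\rm r}-\bv^{\rm p})\cdot\bm{F} + (\bm{\omega}^{\rm r}-\bm{\omega}^{\rm p})\cdot\bm{T} = -\bm{\omega}^{\rm p}\cdot\int_{\Gamma_\epsilon}(\bx-\X(s))\times\bm{\sigma}^{\rm r}\bm{n}\,dS - \bm{\omega}^{\rm r}\cdot\int_{\Gamma_\epsilon}(\bx-\X(s))\times\bm{\sigma}^{\rm p}\bm{n}\,dS,
\end{equation*}
whose right-hand side is $O(\epsilon)$ since $\abs{\bx-\X(s)}\le\epsilon$ on $\Gamma_\epsilon$.

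The next step is to bound each boundary integral by duality. Writing $\bm{\omega}\cdot\int(\bx-\X)\times\bm{\sigma}\bm{n}\,dS = \int(\bm{\omega}\times(\bx-\X(s)))\cdot\bm{\sigma}\bm{n}\,dS$, I construct a divergence-free extension $\bm{\Phi}$ of $\bm{\omega}\times(\bx-\X(s))$ into $\Omega_\epsilon$ by taking $\bm{\Phi}(\bx) = \phi(r)(\bm{\omega}\times(\bx-\X(s(\bx))))$, with $\phi$ a smooth radial cutoff supported in the tubular annulus $r\in[\epsilon,2\epsilon]$, followed by a Bogovskii correction to enforce $\div\bm{\Phi}=0$. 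Since $\abs{\bx-\X(s)}\le 2\epsilon$ in the support and the annular volume is $O(\epsilon^2)$, geometric bookkeeping yields $\norm{\nabla\bm{\Phi}}_{L^2(\Omega_\epsilon)}\le C\epsilon\abs{\bm{\omega}}$. Integration by parts against the Stokes equation then converts the boundary pairing into an interior dissipation term, giving
\begin{equation*}
\bigg|\int_{\Gamma_\epsilon}\bm{\Phi}\cdot\bm{\sigma}\bm{n}\,dS\bigg|\le 2\norm{\nabla\bu}_{L^2}\norm{\nabla\bm{\Phi}}_{L^2}\le C\epsilon\abs{\bm{\omega}}\,\abs{\log\epsilon}^{1/2}(\abs{\bm{F}}+\abs{\bm{T}}),
\end{equation*}
using \eqref{totalPest} for $\bu=\bu^{\rm p}$ and the entirely analogous energy bound for \eqref{rigid}.

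To convert the scalar estimate for $(\bv^{\rm r}-\bv^{\rm p})\cdot\bm{F}+(\bm{\omega}^{\rm r}-\bm{\omega}^{\rm p})\cdot\bm{T}$ into the pointwise bound \eqref{true_vs_SB_bd}, I would repeat the cross-testing derivation with two independent data pairs $(\bm{F}_1,\bm{T}_1)$ and $(\bm{F}_2,\bm{T}_2)$, obtaining the bilinear identity $(M^{\rm r}-M^{\rm p})(\bm{F}_1,\bm{T}_1)\cdot(\bm{F}_2,\bm{T}_2) = (\text{boundary RHS})$, where $M^{\rm r}$ and $M^{\rm p}$ are the corresponding mobility maps $\R^6\to\R^6$. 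Bounding the boundary RHS by the estimate of the previous paragraph combined with the a priori control on $\abs{\bm{\omega}^{\rm p}_i}$ and $\abs{\bm{\omega}^{\rm r}_i}$ that comes from Lemma \ref{v_omega_bd}, the thin-body trace estimate from Section \ref{ineq}, and \eqref{totalPest}, one obtains a quadratic bound on the bilinear form. Both $M^{\rm r}$ and $M^{\rm p}$ are symmetric, as inverses of symmetric positive definite resistance operators, so $M^{\rm r}-M^{\rm p}$ is symmetric and its operator norm equals the supremum of its diagonal quadratic form; this yields $\norm{M^{\rm r}-M^{\rm p}}\le C\epsilon\abs{\log\epsilon}$ and hence \eqref{true_vs_SB_bd}.

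The main technical obstacle is the construction of the extension $\bm{\Phi}$ together with its Bogovskii correction on the thin annular neighborhood: the correction must restore divergence-freeness without degrading the estimate $\norm{\nabla\bm{\Phi}}_{L^2}\le C\epsilon\abs{\bm{\omega}}$, and with constants depending only on the centerline geometry ($c_\Gamma$ and $\kappa_{\max}$) rather than on $\epsilon$. A secondary subtlety is the sharpness of the a priori bound on $\abs{\bm{\omega}^{\rm p}}+\abs{\bm{\omega}^{\rm r}}$: a naive application of Lemma \ref{v_omega_bd}, the trace estimate, and \eqref{totalPest} only produces $\abs{\bm{\omega}}\le C\abs{\log\epsilon}(\abs{\bm{F}}+\abs{\bm{T}})$, which would give an overall rate of $\epsilon\abs{\log\epsilon}^{3/2}$; recovering the stated $\epsilon\abs{\log\epsilon}$ rate requires exploiting the vanishing cross-sectional average $\int_0^{2\pi}(\bx(s,\theta)-\X(s))\,d\theta=0$ of the circular cross-section, so that only the $\theta$-oscillating part of the traction $\bm{\sigma}\bm{n}$ couples to $\bm{\omega}\times(\bx-\X)$, thereby saving a factor of $\abs{\log\epsilon}^{1/2}$ in the dual pairing.
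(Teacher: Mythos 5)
Your reciprocity-plus-polarization route is a genuinely different argument from the paper's. The paper forms the difference $\overline\bu=\bu^{\rm r}-\bu^{\rm p}$, writes down the PDE it satisfies, tests it with $\overline\bu$ itself to get an energy identity $\int 2\abs{\E(\overline\bu)}^2 = -\epsilon\overline{\bm\omega}\cdot\int\be_\rho\times\bm\sigma^{\rm p}\bm n\,dS + \epsilon\bm\omega^{\rm p}\cdot\int\be_\rho\times\overline{\bm\sigma}\bm n\,dS$, and bounds the two torque-residual integrals by $C\norm{\E(\cdot)}_{L^2}$ using a cutoff supported in the $O(\epsilon)$-thick shell together with the divergence theorem for $\div\overline{\bm\sigma}=0$. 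You instead cross-test (Lorentz reciprocity) to obtain the scalar identity for the quadratic form, then polarize and use symmetry of the mobility matrices to upgrade to an operator-norm bound. Your identity is correct (I checked the signs), the mobility matrices are indeed symmetric by reciprocity applied to pairs of solutions of the same PDE, and the polarization/quadratic-form argument is a clean way to recover a pointwise bound. What the paper's approach buys is that it avoids the polarization entirely: testing the \emph{difference} PDE with $\overline\bu$ makes $\norm{\E(\overline\bu)}_{L^2}$ appear quadratically and $\overline{\bm\omega},\overline\bv$ are then extracted via Lemma \ref{v_omega_bd} and a trace inequality, rather than through an operator-norm argument. What your approach buys is that the reciprocity identity isolates the torque mismatch ($\bx$ vs.\ $\X$) cleanly on the right-hand side without ever forming the difference PDE.

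Two technical points. First, the Bogovskii correction to make $\bm\Phi$ divergence-free is both unnecessary and delicate: Bogovskii constants degenerate on thin domains, and you'd have to verify $\epsilon$-uniformity of the constant on the annular shell $\{\epsilon\le\rho\le 2\epsilon\}$, which the paper's Appendix A.2.5 result (for the $\epsilon$-independent region $\mc O$) does not directly give. You can avoid it: since $\div\bm\sigma=0$, integrating by parts with a non-solenoidal $\bm\Phi$ produces the extra term $\int p\,\div\bm\Phi\,d\bx$, and Lemma \ref{pressure} controls $\norm{p}_{L^2(\Omega_\epsilon)}\le C\norm{\E(\bu)}_{L^2(\Omega_\epsilon)}$, so the estimate goes through unchanged. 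This is in effect what the paper does with its cutoff/divergence-theorem trick.

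Second, the log bookkeeping. You correctly identify that the a priori bounds $\abs{\bm\omega^{\rm p}},\abs{\bm\omega^{\rm r}}\le C\abs{\log\epsilon}(\abs{\bm F}+\abs{\bm T})$ and $\norm{\E(\bu)}_{L^2}\le C\abs{\log\epsilon}^{1/2}(\abs{\bm F}+\abs{\bm T})$ give a quadratic-form bound of $C\epsilon\abs{\log\epsilon}^{3/2}(\abs{\bm F}+\abs{\bm T})^2$, hence a pointwise rate of $\epsilon\abs{\log\epsilon}^{3/2}$, not the $\epsilon\abs{\log\epsilon}$ stated in the lemma. But tracing the paper's own proof through (3.9) and the conversion step, using Lemma \ref{v_omega_bd} together with Lemma \ref{trace1}, one also lands at $\epsilon\abs{\log\epsilon}^{3/2}$ (the first inequality of (3.8) as written cannot hold with an $\epsilon$-independent constant, since the $L^2(\Gamma_\epsilon)$ norm of an $O(1)$ function scales like $\sqrt\epsilon$). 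So your route matches the rate the paper's argument actually delivers, and the extra $\abs{\log\epsilon}^{1/2}$ is immaterial to Theorem \ref{rigid_theorem}, whose bottleneck is the $\sqrt\epsilon$ coming from Lemma \ref{ups_up_err}. Your proposed refinement via the vanishing cross-sectional mean $\int_0^{2\pi}\be_\rho\,d\theta=0$ is not convincing as stated: the weighted mean against $\mc J_\epsilon$ is only $O(\epsilon^2)$ rather than zero, and more importantly the $\abs{\log\epsilon}$ loss is coming from the a priori bound on $\abs{\bm\omega^{\rm p}}$ (which is sharp for a slender body), not from the cross-section pairing, so subtracting the $\theta$-mean of the traction does not obviously buy back a half log. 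I would drop that speculative fix and simply state the rate your method produces.

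One small wording correction: you say the matching force constraints produce exact cancellation of all $\bm F$-terms, but in fact the $\bm F$-terms survive as $(\bv^{\rm r}-\bv^{\rm p})\cdot\bm F$ on the left; what cancels is the \emph{leading} torque contribution $\bm T$ in the two expansions $\int\bx\times\bm\sigma\bm n$ versus $\int\X\times\bm\sigma\bm n$, leaving only the first-moment residuals. Your displayed identity is correct; it is only the surrounding prose that misstates what cancels. Finally, to run the argument you do need the classical analogue of Theorem \ref{SB_PDE_well} and Corollary \ref{omegaP} for \eqref{rigid} to control $\abs{\bm\omega^{\rm r}}$ and $\norm{\E(\bu^{\rm r})}_{L^2}$; this is standard from the variational theory in \cite{galdi1999steady,weinberger1972variational} but should be stated.
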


%%%%%%%%%%

The main difficulties in proving Theorem \ref{rigid_theorem} arise in comparing \eqref{SB_PDE} to \eqref{SBT_cond1} - \eqref{SBT_cond2}. As discussed, we assume that we are considering a rigid slender body approximation \eqref{SBT_cond1} - \eqref{SBT_cond2} that gives rise to a force density $\bm{f}^{\rm s}\in C^1(\T)$. A difficulty is that in order to use the error analysis framework of \cite{closed_loop}, the line force density along the slender body must be the same for both the slender body approximation and the slender body PDE. Therefore we need to define yet another intermediary PDE. \\

Given $\bm{f}^{\rm s}\in C^1(\T)$ satisfying \eqref{SBT_cond1} - \eqref{SBT_cond2} for given $\bm{F}$ and $\bm{T}\in \R^3$, we define $\bu^{\rm p,s}$ as the solution to the PDE: 
\begin{equation}\label{SB_PDE_ps}
\begin{aligned}
-\Delta \bu^{\rm p,s} +\nabla p^{\rm p,s} &=0  \hspace{2.75cm} \text{in } \Omega_\epsilon  \\
 \div \bu^{\rm p,s} &=0 \hspace{2.75cm} \text{in } \Omega_\epsilon  \\
\int_0^{2\pi} (\bm{\sigma}^{\rm p,s}\bm{n}) \; \mc{J}_\epsilon(s,\theta) \, d\theta &= \bm{f}^{\rm s}(s)  \hspace{2.05cm} \text{on }\Gamma_\epsilon \\
\bu^{\rm p,s}\big|_{\Gamma_\epsilon} &= {\rm Tr}(\bu^{\rm p,s})(s), \qquad \text{ unknown but independent of } \theta \\
\bu^{\rm p,s}&\to 0  \hspace{2.65cm} \text{as } \abs{\bx}\to\infty.
\end{aligned}
\end{equation}
Here ${\rm Tr}(\bu^{\rm p,s})(s)$ denotes the trace of $\bu^{\rm p,s}$ on $\Gamma_\epsilon$, and $\theta$ refers to the parameterization of $\Gamma_\epsilon$ as a tube about $\X(s)$ (see Section \ref{geometry}). By \cite{closed_loop}, we know that a (weak) solution $(\bu^{\rm p,s},p^{\rm p,s})$ exists and is unique. Now, ${\rm Tr}(\bu^{\rm p,s})(s)$ may not be precisely a rigid motion, but we can show that it is close. In particular, by Theorem 1.3 in \cite{closed_loop}, we may bound the difference between ${\rm Tr}(\bu^{\rm p,s})(s)$ and the non-regularized slender body approximation $\bu^{\rm s}_{\rm C}(s)$ \eqref{SBT_expr} by
\begin{equation}\label{noreg_err} 
\norm{{\rm Tr}( \bu^{\rm p,s}) - \bu^{\rm s}_{\rm C} }_{L^2(\T)} \le C\epsilon \abs{\log\epsilon}^{3/2}\norm{\bm{f}^{\rm s}}_{C^1(\T)} 
\end{equation}
for $C$ depending only on $c_\Gamma$ and $\kappa_{\max}$. The regularized slender body approximation $\bu^{\rm s}_{\text{reg}}(s)= \bu^{\rm s}_{\rm C}(s) - r_\epsilon[\bm{f}^{\rm s}](s) = \bv^{\rm s}+\bm{\omega}^{\rm s} \times \X(s)$ \eqref{SBT_reg} then satisfies
\begin{equation}\label{centerline_err} 
\norm{{\rm Tr}( \bu^{\rm p,s}) - (\bv^{\rm s}+\bm{\omega}^{\rm s} \times \X) }_{L^2(\T)} \le C\epsilon \abs{\log\epsilon}^{3/2}\norm{\bm{f}^{\rm s}}_{C^1(\T)} + \big\| r_\epsilon[\bm{f}^{\rm s}] \big\|_{L^2(\T)}.
\end{equation}

A further technical issue arises in comparing \eqref{SB_PDE_ps} to \eqref{SB_PDE}. In order to obtain a useful estimate of the difference between $(\bu^{\rm p,s}-\bu^{\rm p},p^{\rm p,s}-p^{\rm p})$ in terms of only $\bm{F}$, $\bm{T}$, and $\bm{f}^{\rm s}(s)$, we will need a careful characterization of the $\epsilon$-dependence in a higher regularity estimate for solutions to \eqref{SB_PDE} (see Lemma \ref{high_reg}). Note that for a (sufficiently smooth) sedimenting rigid body, once well-posedness of the PDE has been established, higher regularity of the solution follows by standard arguments for a Stokes Dirichlet boundary value problem. In our case, the novelty is determining how the higher regularity bound scales with $\epsilon$. Our proof (see Appendix \ref{reg_lem}) makes use of the local coordinate system valid near the slender body. We obtain commutator estimates for the tangential derivatives along the slender body surface and use an integration by parts argument, along with the form of the Stokes equations in local coordinates, to show that the bound for an additional derivative of the rigid slender body PDE solution scales like $1/\epsilon$, up to logarithmic corrections. \\ 

Now, using the variational framework for the slender body PDE along with this higher regularity lemma, we can show the following estimate.
\begin{lemma}\label{ups_up_err}
Let $\bu^{\rm p,s}$ satisfy \eqref{SB_PDE_ps} and let $(\bv^{\rm p},\bm{\omega}^{\rm p})$ denote the rigid slender body PDE boundary values satisfying \eqref{SB_PDE}. Then 
\begin{equation}
\|{\rm Tr}( \bu^{\rm p,s}) - ( \bv^{\rm p} + \bm{\omega}^{\rm p}\times \X)\|_{L^2(\T)} \le C\bigg(\sqrt{\epsilon}\abs{\log\epsilon}^{3/2}\big(\norm{\bm{f}^{\rm s}}_{C^1(\T)}+ \abs{\bm{F}}+\abs{\bm{T}}\big) + \epsilon^{-1/2}\big\| r_\epsilon[\bm{f}^{\rm s}] \big\|_{L^2(\T)}\bigg)
\end{equation}
for $C$ depending on $c_\Gamma$, $\kappa_{\max}$, and $\xi_{\max}$. 
\end{lemma}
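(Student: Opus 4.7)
The plan is to study the difference $\bw = \bu^{\rm p,s} - \bu^{\rm p}$ via an energy argument. Its trace on $\Gamma_\epsilon$ is $\theta$-independent (since both component traces are), so $\bw$ is an admissible test function in the weak formulation of \eqref{SB_PDE_ps}. Testing \eqref{SB_PDE_ps} against $\bw$ and integrating by parts against $\bu^{\rm p}$ (which solves Stokes strongly in $\Omega_\epsilon$, so $\div\bm{\sigma}^{\rm p}=0$), I obtain, up to a sign,
\[
2\int_{\Omega_\epsilon}\abs{\E(\bw)}^2\,dx \;=\; \int_{\T} \big(\bm{f}^{\rm s}-\bm{f}^{\rm p}\big)\cdot {\rm Tr}(\bw)(s)\,ds, \qquad \bm{f}^{\rm p}(s):=\int_0^{2\pi}(\bm{\sigma}^{\rm p}\bm{n})\,\mc{J}_\epsilon(s,\theta)\,d\theta.
\]
Crucially, $\bm{f}^{\rm s}$ and $\bm{f}^{\rm p}$ both deliver the same total force $\bm{F}$ and total torque $\bm{T}$, by \eqref{SBT_cond1} and the constraints in \eqref{SB_PDE} respectively, so $\bm{f}^{\rm s}-\bm{f}^{\rm p}$ is $L^2(\T)$-orthogonal to every rigid motion $\bv+\bm{\omega}\times\X$. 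Writing
\[
{\rm Tr}(\bw)=\big[{\rm Tr}(\bu^{\rm p,s})-\bv^{\rm s}-\bm{\omega}^{\rm s}\times\X\big] + \big[(\bv^{\rm s}-\bv^{\rm p})+(\bm{\omega}^{\rm s}-\bm{\omega}^{\rm p})\times\X\big],
\]
the second (rigid) bracket drops out, and \eqref{centerline_err} bounds the remaining non-rigid piece in $L^2(\T)$ by $C\epsilon\abs{\log\epsilon}^{3/2}\norm{\bm{f}^{\rm s}}_{C^1(\T)}$. Cauchy--Schwarz then yields
\[
\int_{\Omega_\epsilon}\abs{\E(\bw)}^2\,dx \;\le\; C\epsilon\abs{\log\epsilon}^{3/2}\norm{\bm{f}^{\rm s}}_{C^1(\T)}\,\norm{\bm{f}^{\rm s}-\bm{f}^{\rm p}}_{L^2(\T)}.
\]

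Next I convert this energy bound into an $L^2(\T)$ estimate on ${\rm Tr}(\bw)$. Since $\bw\to 0$ at infinity and $\div\bw=0$, a Korn-type inequality gives $\norm{\nabla \bw}_{L^2(\Omega_\epsilon)} \le C\norm{\E(\bw)}_{L^2(\Omega_\epsilon)}$, and the capacity-type trace inequality from Section \ref{ineq}, applied to the $\theta$-independent trace ${\rm Tr}(\bw)$, gives $\norm{{\rm Tr}(\bw)}_{L^2(\T)} \le C\abs{\log\epsilon}^{1/2}\norm{\nabla\bw}_{L^2(\Omega_\epsilon)}$. Combining,
\[
\norm{{\rm Tr}(\bw)}_{L^2(\T)}^2 \;\le\; C\epsilon\abs{\log\epsilon}^{5/2}\norm{\bm{f}^{\rm s}}_{C^1(\T)}\,\norm{\bm{f}^{\rm s}-\bm{f}^{\rm p}}_{L^2(\T)}.
\]
Using $\norm{\bm{f}^{\rm s}}_{L^2(\T)}\le\norm{\bm{f}^{\rm s}}_{C^1(\T)}$ and a weighted Young inequality to redistribute powers across the product of $\norm{\bm{f}^{\rm s}}_{C^1}^{1/2}$ and $\norm{\bm{f}^{\rm s}-\bm{f}^{\rm p}}_{L^2}^{1/2}$, the desired bound will follow once $\norm{\bm{f}^{\rm p}}_{L^2(\T)}$ is controlled in terms of $\abs{\bm{F}}+\abs{\bm{T}}$ with at most $\abs{\log\epsilon}^{1/2}$ loss.

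Estimating $\norm{\bm{f}^{\rm p}}_{L^2(\T)}$ is the main obstacle, and is precisely where Lemma \ref{high_reg} enters. Cauchy--Schwarz in the $\theta$ integral, using $\mc{J}_\epsilon=O(\epsilon)$, gives $\norm{\bm{f}^{\rm p}}_{L^2(\T)}^2 \le C\epsilon\norm{\bm{\sigma}^{\rm p}}_{L^2(\Gamma_\epsilon)}^2$. A scaled trace inequality applied to $\bm{\sigma}^{\rm p}$ on a tubular $\epsilon$-neighborhood of $\Gamma_\epsilon$ then produces a bound of the form
\[
\norm{\bm{\sigma}^{\rm p}}_{L^2(\Gamma_\epsilon)}^2 \;\le\; C\epsilon^{-1}\norm{\bm{\sigma}^{\rm p}}_{L^2(\Omega_\epsilon)}^2 + C\epsilon\norm{\nabla\bm{\sigma}^{\rm p}}_{L^2(\Omega_\epsilon)}^2.
\]
The first term is handled by Theorem \ref{SB_PDE_well}; the second is controlled by Lemma \ref{high_reg} with its $\epsilon^{-1}$ scaling (up to $\abs{\log\epsilon}^{1/2}$), so that the factor $\epsilon^2$ from the prefactor and the Jacobian exactly cancels the $\epsilon^{-2}$ from the higher-regularity bound, yielding $\norm{\bm{f}^{\rm p}}_{L^2(\T)}\le C\abs{\log\epsilon}^{1/2}(\abs{\bm{F}}+\abs{\bm{T}})$. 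Substituting back into the bound from the preceding paragraph and balancing the $\abs{\log\epsilon}$ powers via Young's inequality gives the claimed estimate. The genuinely hard input is Lemma \ref{high_reg}: without its sharp $\epsilon$-scaling, the cancellation described above fails and one obtains a strictly weaker rate than $\sqrt{\epsilon}\abs{\log\epsilon}^{3/2}$.
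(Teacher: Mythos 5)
Your proof is correct in structure and follows essentially the same route as the paper: defining $\wt\bu=\bu^{\rm p,s}-\bu^{\rm p}$, testing the weak form of \eqref{SB_PDE_ps} against it, using that $\bm{f}^{\rm s}-\bm{f}^{\rm p}$ has vanishing total force and torque so the rigid part of ${\rm Tr}(\wt\bu)$ drops out, invoking \eqref{centerline_err} on the remaining piece $\bm{R}$, converting the energy bound to an $L^2(\T)$ trace bound via Korn and the $\abs{\log\epsilon}^{1/2}$ trace inequality, and finally controlling $\norm{\bm{f}^{\rm p}}_{L^2(\T)}$ via the higher-regularity lemma. The one genuine difference from the paper is the trace inequality you apply to $\bm{\sigma}^{\rm p}$ on $\Gamma_\epsilon$: you propose a standard scaled trace bound of the form $\norm{\bm{\sigma}^{\rm p}}_{L^2(\Gamma_\epsilon)}^2 \le C\epsilon^{-1}\norm{\bm{\sigma}^{\rm p}}_{L^2}^2 + C\epsilon\norm{\nabla\bm{\sigma}^{\rm p}}_{L^2}^2$, whereas the paper applies Lemma \ref{trace2}, the $D^{1,2}$-type inequality $\norm{\bm{\sigma}^{\rm p}}_{L^2(\Gamma_\epsilon)}^2 \le C\epsilon\abs{\log\epsilon}\norm{\nabla\bm{\sigma}^{\rm p}}_{L^2}^2$. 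Your variant, combined with Theorem \ref{SB_PDE_well} for the $L^2$ term, would in fact yield the slightly sharper bound $\norm{\bm{f}^{\rm p}}_{L^2(\T)}\le C\abs{\log\epsilon}(\abs{\bm{F}}+\abs{\bm{T}})$, compared to the paper's Lemma \ref{fp_FT}, which gives $C\abs{\log\epsilon}^{3/2}(\abs{\bm{F}}+\abs{\bm{T}})$. Either power is sufficient to absorb into the claimed estimate after the final weighted Young step, so the discrepancy does not affect the conclusion.

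One arithmetic slip to note: your claimed $\norm{\bm{f}^{\rm p}}_{L^2(\T)}\le C\abs{\log\epsilon}^{1/2}(\abs{\bm{F}}+\abs{\bm{T}})$ is too optimistic and does not follow from what you cite. The higher-regularity estimate gives $\norm{\nabla\bm{\sigma}^{\rm p}}_{L^2}\le \tfrac{C}{\epsilon}\abs{\log\epsilon}^{1/2}(\norm{\nabla\bu^{\rm p}}_{L^2}+\norm{p^{\rm p}}_{L^2})$, but one must still apply Theorem \ref{SB_PDE_well} to bound $\norm{\nabla\bu^{\rm p}}_{L^2}+\norm{p^{\rm p}}_{L^2}\le C\abs{\log\epsilon}^{1/2}(\abs{\bm{F}}+\abs{\bm{T}})$, which contributes a further $\abs{\log\epsilon}^{1/2}$. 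The resulting power with your trace inequality is $\abs{\log\epsilon}$, not $\abs{\log\epsilon}^{1/2}$; the claimed lemma still follows.
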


Combining estimate \eqref{centerline_err} with Lemma \ref{ups_up_err} and using Lemma \ref{v_omega_bd} with $\bv^{\rm p}-\bv^s$ and $\bm{\omega}^{\rm p}-\bm{\omega}^{\rm s}$ in place of $\bv$ and $\bm{\omega}$, we obtain the following bound for the difference between the regularized slender body approximation \eqref{SBT_cond1}-\eqref{SBT_cond2} and the slender body PDE \eqref{SB_PDE}: 
\begin{equation}\label{PDE_vs_SBT}
\abs{\bv^{\rm p}-\bv^{\rm s}} + \abs{\bm{\omega}^{\rm p}-\bm{\omega}^{\rm s}} \le C\bigg(\sqrt{\epsilon}\abs{\log\epsilon}^{3/2}\big(\norm{\bm{f}^{\rm s}}_{C^1(\T)}+ \abs{\bm{F}}+\abs{\bm{T}}\big) + \epsilon^{-1/2}\big\| r_\epsilon[\bm{f}^{\rm s}] \big\|_{L^2(\T)}\bigg).
\end{equation}

Finally, combining the estimate \eqref{PDE_vs_SBT} with Lemma \ref{true_vs_SB} yields Theorem \ref{rigid_theorem}. The remainder of this paper is thus devoted to showing Lemmas \ref{v_omega_bd} - \ref{ups_up_err}. We will begin by introducing the variational framework for \eqref{SB_PDE} and noting some key inequalities in Section \ref{variational0}. In Section \ref{2ndLemma}, we show Lemma \ref{v_omega_bd} and use it to derive estimates for $(\bu^{\rm p},p^{\rm p},\bv^{\rm p},\bm{\omega}^{\rm p})$ satisfying \eqref{SB_PDE}. These estimates can then be used to show Theorem \ref{SB_PDE_well}. In Section \ref{3rdLemma}, we use the variational framework for the rigid slender body PDE to prove Lemma \ref{true_vs_SB}. Finally, in Section \ref{1stLemma}, we prove Lemma \ref{ups_up_err} to complete the proof of Theorem \ref{rigid_theorem}.  

%%%%%%%%%%%%%%%%%%%%%%%%%%%%%%%%%%%%%%%%%%%%%%%%%%%%%%%%%%%%%%
%%%%%%%%%%%%%%%%%%%%%%%%%%%%%%%%%%%%%%%%%%%%%%%%%%%%%%%%%%%%%%
%%%%%%%%%%%%%%%%%%%%%%%%%%%%%%%%%%%%%%%%%%%%%%%%%%%%%%%%%%%%%%
%%%%%%%%%%%%%%%%%%%%%%%%%%%%%%%%%%%%%%%%%%%%%%%%%%%%%%%%%%%%%%
%%%%%%%%%%%%%%%%%%%%%%%%%%%%%%%%%%%%%%%%%%%%%%%%%%%%%%%%%%%%%%

\section{Geometry and variational framework}\label{variational0}
We begin in Section \ref{geometry} with a precise definition of the slender body geometry. In Section \ref{variational}, we introduce the variational form of the slender body PDE for rigid motion \eqref{SB_PDE}, which, along with the variational form of \eqref{SB_PDE_ps}, will provide the framework for obtaining Theorem \ref{rigid_theorem}. Finally, in Section \ref{ineq}, we make note of some key inequalities that will be used throughout the remainder of this paper. 

%%%%%
\subsection{Slender body geometry}\label{geometry}
As in \cite{closed_loop}, we let $\X : \T\equiv \R / \Z \to \R^3$ denote the coordinates of a closed, non-self-intersecting $C^3$ curve in $\R^3$, parameterized by arclength $s$. We require that
\begin{equation}\label{cgamma}
 \inf_{s\neq s'}\frac{\abs{\X(s)-\X(s')}}{\abs{s-s'}} \ge c_\Gamma 
 \end{equation}
for some constant $c_\Gamma>0$. \\

Along $\X(s)$ we consider the orthonormal frame $(\be_t(s),\be_{n_1}(s),\be_{n_2}(s))$ defined in \cite{closed_loop}. Here $\be_t(s) = \frac{d\X}{ds}$ is the unit tangent vector to $\X(s)$ and $(\be_{n_1}(s),\be_{n_2}(s))$ span the plane normal to $\be_t(s)$. The frame satisfies the ODEs 
\begin{equation}\label{C2_frame}
\frac{d}{ds}\be_t = \kappa_1\be_{n_1}+\kappa_2\be_{n_2}, \quad \frac{d}{ds}\be_{n_1} = -\kappa_1\be_t+\kappa_3\be_{n_2}, \quad  \frac{d}{ds}\be_{n_2} = -\kappa_2\be_t-\kappa_3\be_{n_1}
\end{equation}
where $\kappa_1^2(s) +\kappa_2^2(s) = \kappa^2(s)$, the fiber curvature, and $\kappa_3$ is a constant satisfying $\abs{\kappa_3}\le \pi$. We require the orthonormal frame to be $C^2$ and denote
 \begin{equation}
 \kappa_{\max} := \max_{s\in\T} \abs{\kappa(s)}, \quad \xi_{\max} = \max_{s\in\T} \abs{\frac{\p^3\X}{\p s^3}} .
 \end{equation}
Note that $\abs{\p \kappa_1 /\p s} +\abs{\p \kappa_2/\p s} \le \xi_{\max}+2(\kappa_{\max}+\pi)$. \\

We define 
\[ \be_\rho(s,\theta) := \cos\theta\be_{n_1}(s)+\sin\theta\be_{n_2}(s) \]
and, for some $r_{\max}=r_{\max}(c_\Gamma,\kappa_{\max})\le \frac{1}{2\kappa_{\max}}$, we can uniquely parameterize points $\bx$ within a neighborhood ${\rm dist}(\bx,\X)< r_{\max}$ of $\X(s)$ as 
\[ \bx = \X(s) + \rho\be_\rho(s,\theta), \quad 0\le \rho <r_{\max}. \]

For $\epsilon<r_{\max}/4$, we may then define a slender body of uniform radius $\epsilon$ as
\begin{equation}\label{SB_def} 
\Sigma_\epsilon:= \big\{\bx\in \R^3 \; : \; \bx = \X(s) + \rho\be_\rho(s,\theta), \; \rho <\epsilon, \; 0\le \theta<2\pi \big\}.
\end{equation}
We parameterize the slender body surface $\Gamma_\epsilon = \p \Sigma_\epsilon$ as
\[ \Gamma_\epsilon =  \X(s) + \epsilon\be_\rho(s,\theta).\]
In addition, we may parameterize the Jacobian factor $\mc{J}_\epsilon(s,\theta)$ on the slender body surface as
\begin{equation}\label{jac_fac}
\mc{J}_\epsilon(s,\theta) = \epsilon\big(1-\epsilon(\kappa_1(s)\cos\theta+\kappa_2(s)\sin\theta) \big).
\end{equation}

%%%%%%%%%
\subsection{Variational form of \eqref{SB_PDE}}\label{variational}
Letting $\Omega_\epsilon=\R^3\backslash\overline{\Sigma_\epsilon}$ for $\Sigma_\epsilon$ as in Section \ref{geometry}, we recall the following function spaces, used in \cite{closed_loop} to study a slender body PDE of the form \eqref{SB_PDE_ps}. We use $D^{1,2}(\Omega_\epsilon)$ to denote the homogeneous Sobolev space
\begin{equation}\label{D12}
D^{1,2}(\Omega_\epsilon) = \big\{ \bu\in L^6(\Omega_\epsilon) \; : \; \nabla \bu \in L^2(\Omega_\epsilon) \big\}, 
\end{equation}
which, due to the Sobolev inequality in $\Omega_\epsilon \subset \R^3$ (see Lemma \ref{sobolev}), is a Hilbert space with norm $\norm{\nabla \bu}_{L^2(\Omega_\epsilon)}$. We define $D^{1,2}_0(\Omega_\epsilon)$ as the closure of $C_0^\infty(\Omega_\epsilon)$ (smooth, compactly supported test functions) in $D^{1,2}(\Omega_\epsilon)$.  \\

We also recall the space $\A_\epsilon$, the subspace of $D^{1,2}(\Omega_\epsilon)$ with $\theta$-independent boundary values: 
\begin{equation}\label{Aepsilon}
\A_\epsilon = \big\{ \bu\in D^{1,2}(\Omega_\epsilon) \; : \; \bu\big|_{\Gamma_\epsilon} = \bu(s) \big\}.
\end{equation}
Here the boundary value $\bu\big|_{\Gamma_\epsilon} = \bu(s)$ is not directly specified but is required to be independent of the surface angle $\theta$. We define $\A_\epsilon^\div$ to be the divergence-free subspace of $\A_\epsilon$. \\

We also recall the variational form of \eqref{SB_PDE_ps}, examined in detail in \cite{closed_loop}.
\begin{definition}[Weak solution to \eqref{SB_PDE_ps}]
A weak solution $\bu^{\rm p,s}\in \A_\epsilon^{\div}$ to \eqref{SB_PDE_ps} satisfies
\begin{equation}\label{weak_no_p}
\int_{\Omega_\epsilon} 2 \E(\bu^{\rm p,s}): \E(\bv) \, d\bx = \int_\T \bv(s)\cdot\bm{f}^{\rm s}(s) \, ds 
\end{equation}
for any $\bv\in\A_\epsilon^{\div}$. In addition, for $\bu^{\rm p,s}$ satisfying \eqref{weak_no_p}, there exists a unique pressure $p^{\rm p,s}\in L^2(\Omega_\epsilon)$ satisfying
\begin{equation}\label{weak_with_p}
\int_{\Omega_\epsilon} \big( 2 \E(\bu^{\rm p,s}): \E(\bv) - p^{\rm p,s}\, \div\bv \big)\, d\bx = \int_\T \bv(s)\cdot\bm{f}^{\rm s}(s) \, ds
\end{equation}
for any $\bv\in \A_\epsilon$.
\end{definition}

To study \eqref{SB_PDE}, we define the following subspace of $\A_\epsilon$, where we further restrict the boundary value to be a rigid motion:
\begin{equation}\label{Repsilon}
\mc{R}_\epsilon =  \big\{ \bu\in D^{1,2}(\Omega_\epsilon) \; : \; \bu\big|_{\Gamma_\epsilon} = \bv + \bm{\omega}\times\X(s) \text{ for }\bv,\, \bm{\omega}\in \R^3  \big\}.
\end{equation}
Again, $\bv$ and $\bm{\omega}$ are not directly specified but are required to be constant vectors in $\R^3$. We let $\mc{R}_\epsilon^\div$ denote the divergence-free subspace of $\mc{R}_\epsilon$. \\

We then define a weak solution to the rigid motion slender body PDE as follows.
\begin{definition}[Weak solution to \eqref{SB_PDE}]\label{rigid_weak}
A weak solution $\bu^{\rm p}\in \mc{R}_\epsilon^\div$ to \eqref{SB_PDE} satisfies
\begin{equation}
\int_{\Omega_\epsilon} 2\, \E(\bu^{\rm p}): \E(\bm{\varphi}) \, d\bx = \bv_\varphi \cdot\bm{F} + \bm{\omega}_\varphi \cdot\bm{T}
\end{equation}
for any $\bm{\varphi}\in \mc{R}_\epsilon^\div$, where we denote $\bm{\varphi}\big|_{\Gamma_\epsilon} =  \bv_\varphi + \bm{\omega}_\varphi \times \X(s)$. 
\end{definition}

Given the existence and uniqueness of $\bu^{\rm p}$ satisfying Definition \ref{rigid_weak}, using an essentially identical proof to that in Section 2.2 of \cite{closed_loop}, we can establish an equivalent notion of weak solution that includes a corresponding weak pressure $p^{\rm p}\in L^2(\Omega_\epsilon)$ and removes the divergence-free restriction on test functions $\bm{\varphi}$.
\begin{definition}[Weak solution to \eqref{SB_PDE} with pressure]\label{rigid_weak_p}
Given $\bu^{\rm p}\in \mc{R}_\epsilon^\div$ satisfying Definition \ref{rigid_weak}, there exists a unique $p^{\rm p}\in L^2(\Omega_\epsilon)$ satisfying
\begin{equation}
\int_{\Omega_\epsilon} \big(2\, \E(\bu^{\rm p}): \E(\bm{\varphi}) - p\, \div\bm{\varphi} \big) \, d\bx = \bv_\varphi \cdot\bm{F} + \bm{\omega}_\varphi\cdot \bm{T}
\end{equation}
for any $\bm{\varphi}\in \mc{R}_\epsilon$. Here we again denote $\bm{\varphi}\big|_{\Gamma_\epsilon} =  \bv_\varphi + \bm{\omega}_\varphi \times \X(s)$. 
\end{definition}

\subsection{Important inequalities}\label{ineq}
In addition to the definitions of Section \ref{variational}, we collect the statements of various inequalities that are used throughout the paper, keeping track of the $\epsilon$-dependence in any constants that arise. The proofs of these inequalities are mostly contained in \cite{closed_loop}, with the exception of Lemma \ref{trace2}, which appears in Appendix \ref{appendix}.  \\

First, we note the following pair of trace inequalities. The first holds for functions $\bu\in \A_\epsilon$ due to $\theta$-independence on $\Gamma_\epsilon$. As a slight abuse of notation, the trace operator ${\rm Tr}$, when applied to $\A_\epsilon$ functions, will be considered as both a function on $\Gamma_\epsilon$ and on $\T$. Note that for $\bu\in \A_\epsilon$, we have 
\begin{align*}
 \|{\rm Tr}(\bu)\|_{L^2(\Gamma_\epsilon)}^2 &= \int_\T \int_0^{2\pi} |{\rm Tr}(\bu)(s)|^2 \, \mc{J}_\epsilon(s,\theta) d\theta \,ds \\
 &=  \int_\T |{\rm Tr}(\bu)(s)|^2 \int_0^{2\pi} \mc{J}_\epsilon(s,\theta) d\theta \,ds = 2\pi \epsilon \|{\rm Tr}(\bu)\|_{L^2(\T)}^2, 
 \end{align*}
where we have used that $\int_0^{2\pi} \mc{J}_\epsilon(s,\theta) d\theta =2\pi\epsilon$ by \eqref{jac_fac}. For  $\bu\in \A_\epsilon$, the following lemma holds.
\begin{lemma}\emph{($L^2(\T)$ trace inequality)}\label{trace1} 
Let $\Omega_{\epsilon}=\R^3 \backslash \overline{\Sigma_{\epsilon}}$ be as in Section \ref{geometry}. Then any 
$\bu\in \A_\epsilon$ satisfies
\begin{equation}
\|{\rm Tr}(\bu)\|_{L^2(\T)} \le C |\log\epsilon|^{1/2} \| \nabla \bu\|_{L^2(\Omega_{\epsilon})}, 
\end{equation}
where the constant $C$ depends on $\kappa_{\max}$ and $c_{\Gamma}$ but is independent of $\epsilon$. 
\end{lemma}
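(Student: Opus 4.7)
The plan is to exploit the $\theta$-independence of the boundary value $\mathbf{u}|_{\Gamma_\epsilon}$ together with the fundamental theorem of calculus in the radial direction of tubular coordinates, picking up the logarithmic factor from a weighted Cauchy--Schwarz estimate. Concretely, I would work in the local coordinates $(s,\rho,\theta)$ valid on $\{\text{dist}(\mathbf{x},\mathbf{X})<r_{\max}\}$ from Section \ref{geometry}, in which the Jacobian has the form $\rho(1-\rho(\kappa_1\cos\theta+\kappa_2\sin\theta))$ and is bounded between constants times $\rho$, with constants depending only on $c_\Gamma$ and $\kappa_{\max}$.

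First, pick a fixed radius $r_0 \in (2\epsilon,r_{\max}/2)$ that depends only on $c_\Gamma,\kappa_{\max}$, and consider any smooth $\mathbf{u}\in \mathcal{A}_\epsilon$ (then pass to the closure). Because $\mathbf{u}|_{\Gamma_\epsilon}$ does not depend on $\theta$, we may write, for each fixed $s$,
\begin{equation*}
\mathbf{u}(s) \;=\; \frac{1}{2\pi}\int_0^{2\pi}\mathbf{u}(s,\epsilon,\theta)\,d\theta \;=\; \frac{1}{2\pi}\int_0^{2\pi}\mathbf{u}(s,r_0,\theta)\,d\theta \;-\; \frac{1}{2\pi}\int_0^{2\pi}\!\!\int_\epsilon^{r_0}\partial_\rho \mathbf{u}(s,\rho,\theta)\,d\rho\,d\theta.
\end{equation*}
Squaring and using $(a+b)^2\le 2a^2+2b^2$ splits the problem into an \emph{outer} contribution and an \emph{interior} contribution.

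For the outer contribution, Cauchy--Schwarz in $\theta$ gives $|\mathbf{u}(s)|^2 \lesssim \int_0^{2\pi}|\mathbf{u}(s,r_0,\theta)|^2 d\theta$, and integrating over $s\in\mathbb{T}$ yields an $L^2$ trace on the fixed tube of radius $r_0$; since $r_0$ is $\epsilon$-independent, a standard 3D trace inequality (applied, say, to a smooth truncation supported on a fixed neighborhood of $\{\rho\le r_0\}\setminus\{\rho\le r_0/2\}$ where the geometry is non-degenerate) controls this by $C\|\nabla\mathbf{u}\|_{L^2(\Omega_\epsilon)}^2$ with no logarithmic factor. For the interior contribution, I apply weighted Cauchy--Schwarz in $\rho$:
\begin{equation*}
\Bigl|\int_\epsilon^{r_0}\partial_\rho \mathbf{u}(s,\rho,\theta)\,d\rho\Bigr|^2 \;\le\; \Bigl(\int_\epsilon^{r_0}\tfrac{d\rho}{\rho}\Bigr)\Bigl(\int_\epsilon^{r_0}|\partial_\rho\mathbf{u}|^2\,\rho\,d\rho\Bigr) \;\le\; |\log(r_0/\epsilon)|\int_\epsilon^{r_0}|\partial_\rho\mathbf{u}|^2\,\rho\,d\rho.
\end{equation*}
Averaging over $\theta$, integrating over $s$, and recognizing that $\rho\,d\rho\,d\theta\,ds$ agrees with the volume element in $\Omega_\epsilon$ up to a factor bounded by constants depending on $c_\Gamma,\kappa_{\max}$, this is bounded by $C|\log\epsilon|\,\|\nabla\mathbf{u}\|_{L^2(\Omega_\epsilon)}^2$. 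Combining both contributions and taking the square root produces the stated bound.

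The only real obstacle is bookkeeping for the geometric Jacobians: one must verify that $\mathcal{J}_\epsilon$ and the interior volume element $(1-\rho\kappa_1\cos\theta-\rho\kappa_2\sin\theta)\rho$ are comparable to $\rho$ uniformly on $\rho\le r_{\max}$ with constants depending only on $c_\Gamma,\kappa_{\max}$ (which follows from $r_{\max}\le 1/(2\kappa_{\max})$), and that the tubular coordinates are genuinely a diffeomorphism on $\{\rho<r_{\max}\}$ (guaranteed by the non-self-intersection condition \eqref{cgamma}). Everything else is standard, and because the $\theta$-averaging step exactly eliminates the angular direction, no additional $\epsilon$-powers creep in, leaving only the $|\log\epsilon|^{1/2}$ produced by the radial weighted Cauchy--Schwarz.
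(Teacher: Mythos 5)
Your argument is correct and rests on exactly the same mechanism the paper's cited proof uses: the radial fundamental theorem of calculus combined with the weighted Cauchy--Schwarz bound $\int_\epsilon^{r_0}\rho^{-1}\,d\rho\le|\log\epsilon|$ is what produces the logarithm, while the $\theta$-independence of the boundary trace is precisely what lets one work in $L^2(\T)$ without paying the $\sqrt{\epsilon}$ Jacobian factor that appears in Lemma~\ref{trace2}. The only packaging difference is that the reference (Appendix A.2.1 of \cite{closed_loop}, whose outline the present paper sketches in the proof of Lemma~\ref{trace2}) reduces to straight cylinder shells via a partition of unity and $\epsilon$-independent diffeomorphisms and enforces vanishing on the outer shell boundary by a cutoff, whereas you work directly in the global tubular coordinates and dispose of the outer contribution with a fixed-radius trace plus the Sobolev embedding of Lemma~\ref{sobolev}; both are equally valid.
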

The proof of this lemma appears in Appendix A.2.1 of \cite{closed_loop}. \\

On the other hand, for general $D^{1,2}(\Omega)$ functions, the following trace inequality holds over the surface $\Gamma_\epsilon$:
\begin{lemma}\emph{($L^2(\Gamma_\epsilon)$ trace inequality)}\label{trace2} 
Let $\Omega_{\epsilon}=\R^3 \backslash \overline{\Sigma_{\epsilon}}$ be as in Section \ref{geometry}. Then any $\bu\in D^{1,2}(\Omega_\epsilon)$ satisfies
\begin{equation}
\|{\rm Tr}(\bu)\|_{L^2(\Gamma_\epsilon)} \le C \sqrt{\epsilon}|\log\epsilon|^{1/2} \| \nabla \bu\|_{L^2(\Omega_{\epsilon})}, 
\end{equation}
where the constant $C$ depends on $\kappa_{\max}$ and $c_{\Gamma}$ but is independent of $\epsilon$. 
\end{lemma}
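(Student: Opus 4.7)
\textbf{Proof proposal for Lemma \ref{trace2}.}

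The plan is to exploit the local coordinate system $(s,\theta,\rho)$ from Section \ref{geometry} and prove the bound by a radial fundamental-theorem-of-calculus argument out to a fixed distance $R_0$ from the centerline, picking up $\sqrt{\epsilon}$ from the Jacobian on $\Gamma_\epsilon$ and $|\log\epsilon|^{1/2}$ from a Cauchy--Schwarz estimate with weight $1/\rho$. Fix $R_0 := r_{\max}/2$, which depends only on $c_\Gamma$ and $\kappa_{\max}$, and define the fixed tubular surface $\Gamma_{R_0} = \{\X(s)+R_0\be_\rho(s,\theta)\}$ together with the annular region $A_\epsilon = \{\X(s)+\rho\be_\rho(s,\theta):\epsilon<\rho<R_0\}$.

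First, for a point $\bx_0 = \X(s)+\epsilon\be_\rho(s,\theta)\in\Gamma_\epsilon$ I would write, via the fundamental theorem of calculus along the radial direction,
\begin{equation*}
\bu(\bx_0) = \bu(\X(s)+R_0\be_\rho(s,\theta)) - \int_\epsilon^{R_0} \p_\rho\bu(\X(s)+\rho\be_\rho(s,\theta))\, d\rho.
\end{equation*}
Squaring and applying Cauchy--Schwarz with the weight $1/\rho$ gives
\begin{equation*}
|\bu(\bx_0)|^2 \le 2|\bu(\X(s)+R_0\be_\rho)|^2 + 2\log(R_0/\epsilon)\int_\epsilon^{R_0} |\p_\rho\bu|^2\rho\, d\rho,
\end{equation*}
where $\log(R_0/\epsilon)\le C|\log\epsilon|$. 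Next, integrate this inequality against $\mc{J}_\epsilon(s,\theta)\, d\theta\, ds$ over $\T\times[0,2\pi)$; since $\mc{J}_\epsilon \le C\epsilon$ by \eqref{jac_fac}, the left-hand side yields $\|{\rm Tr}(\bu)\|_{L^2(\Gamma_\epsilon)}^2$ and the right-hand side gains an overall factor of $\epsilon$. The volume element in local coordinates on $A_\epsilon$ is $\rho(1-\rho(\kappa_1\cos\theta+\kappa_2\sin\theta))\, d\rho\, d\theta\, ds$, which is comparable to $\rho\, d\rho\, d\theta\, ds$ thanks to $R_0\kappa_{\max}\le 1/2$, so the double radial integral converts directly to $\|\nabla\bu\|_{L^2(A_\epsilon)}^2 \le \|\nabla\bu\|_{L^2(\Omega_\epsilon)}^2$.

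It remains to control the boundary term at the fixed surface $\Gamma_{R_0}$. Since $\mc{J}_{R_0}(s,\theta)\ge R_0/2$, one has
\begin{equation*}
\int_\T\int_0^{2\pi} |\bu(\X(s)+R_0\be_\rho)|^2\, d\theta\, ds \le C\|\bu\|_{L^2(\Gamma_{R_0})}^2,
\end{equation*}
and $\Gamma_{R_0}$ is a smooth compact surface whose geometric constants depend only on $c_\Gamma$ and $\kappa_{\max}$, not on $\epsilon$. Choosing a bounded neighborhood $U$ of $\Gamma_{R_0}$ with $U\subset\Omega_\epsilon$ (independent of $\epsilon$), a standard trace inequality yields $\|\bu\|_{L^2(\Gamma_{R_0})}\le C\|\bu\|_{H^1(U)}$; by H\"older's inequality on the bounded set $U$ and the Sobolev inequality $\|\bu\|_{L^6(\Omega_\epsilon)}\le C\|\nabla\bu\|_{L^2(\Omega_\epsilon)}$ guaranteed by the $D^{1,2}$ definition, we get $\|\bu\|_{L^2(U)}\le C\|\nabla\bu\|_{L^2(\Omega_\epsilon)}$. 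Combining, this boundary contribution is bounded by $C\epsilon\|\nabla\bu\|_{L^2(\Omega_\epsilon)}^2$, which is dominated by $C\epsilon|\log\epsilon|\|\nabla\bu\|_{L^2(\Omega_\epsilon)}^2$. Taking square roots gives the claim.

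The main obstacle I anticipate is the $\Gamma_{R_0}$ boundary term: one must pass from a $D^{1,2}$ function (not assumed $L^2$) to an $L^2$ trace on a fixed surface without introducing $\epsilon$-dependent constants. The resolution above uses the global Sobolev embedding $D^{1,2}\hookrightarrow L^6$ on the exterior domain together with a trace inequality on the fixed surface $\Gamma_{R_0}$ whose constants depend only on $c_\Gamma$ and $\kappa_{\max}$; this must be done carefully so that neither the tubular-neighborhood geometry nor the Sobolev constant degenerates as $\epsilon\to 0$.
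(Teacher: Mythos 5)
Your proof is correct, and it takes a genuinely different route from the paper's. The paper reduces the problem (via the $\epsilon$-independent $C^2$-diffeomorphisms from \cite{closed_loop}) to a straight cylindrical shell and then restricts attention to the space $D^{1,2}_\Gamma$, i.e.\ functions vanishing on the outer cylinder; this kills the outer boundary term entirely, and the remaining step is exactly the FTC-plus-weighted-Cauchy--Schwarz computation (estimate (A.4) of \cite{closed_loop}) followed by multiplication by the $O(\epsilon)$ surface element. You instead work directly in the curved $(s,\theta,\rho)$ coordinates without cutting off, which is more self-contained and avoids invoking the diffeomorphism machinery, but forces you to control an explicit boundary contribution on the fixed surface $\Gamma_{R_0}$. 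Your resolution — a trace inequality on the $\epsilon$-independent surface $\Gamma_{R_0}$ together with H\"older on a fixed annular neighborhood and the $D^{1,2}\hookrightarrow L^6$ Sobolev embedding (Lemma \ref{sobolev}, with constant depending only on $c_\Gamma,\kappa_{\max}$) — handles this cleanly, and the resulting $O(\epsilon)$ term is absorbed into the $O(\epsilon|\log\epsilon|)$ main term. The price you pay relative to the paper's proof is a slightly heavier reliance on the Sobolev embedding; what you gain is that the argument is elementary and visible in one place rather than delegated to a reduction. Two minor points worth making explicit in a final write-up: the FTC along radial rays should be justified for $\bu\in D^{1,2}$ via density of smooth functions (standard), and one should note $|\p_\rho\bu|\le|\nabla\bu|$ pointwise and that $1-\rho\wh\kappa\ge 3/4$ on $A_\epsilon$ (since $R_0\kappa_{\max}\le 1/4$), so the curved volume element is uniformly comparable to $\rho\,d\rho\,d\theta\,ds$.
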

The proof of Lemma \ref{trace2} appears in Appendix \ref{appendix}. \\

We will also need the following Korn inequality.
\begin{lemma}\emph{(Korn inequality)}\label{korn}
Let $\Omega_{\epsilon}=\R^3 \backslash \overline{\Sigma_{\epsilon}}$ be as in Section \ref{geometry}. Then any $\bu\in D^{1,2}(\Omega_{\epsilon})$ satisfies 
\begin{equation}
 \|\nabla \bu\|_{L^2(\Omega_{\epsilon})} \le C\|\E(\bu)\|_{L^2(\Omega_{\epsilon})}, 
 \end{equation}
 where the constant $C$ depends only on $\kappa_{\max}$ and $c_{\Gamma}$.
\end{lemma}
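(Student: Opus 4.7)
My plan is to reduce to the Korn inequality on all of $\R^3$ via an extension of $\bu\in D^{1,2}(\Omega_\epsilon)$ to $\tilde\bu\in D^{1,2}(\R^3)$ whose strain is controlled by that of $\bu$ with constants depending only on $c_\Gamma$ and $\kappa_{\max}$. On $\R^3$, Korn is elementary: for $\bu\in C_0^\infty(\R^3)$ two integrations by parts yield the identity $2\norm{\E(\bu)}_{L^2(\R^3)}^2 = \norm{\nabla\bu}_{L^2(\R^3)}^2 + \norm{\div\bu}_{L^2(\R^3)}^2$, extended to $D^{1,2}(\R^3)$ by density, with the Sobolev embedding $D^{1,2}(\R^3)\hookrightarrow L^6(\R^3)$ ensuring vanishing of boundary contributions at infinity. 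This gives $\norm{\nabla \bu}_{L^2(\R^3)} \le \sqrt{2}\,\norm{\E(\bu)}_{L^2(\R^3)}$, so the entire task reduces to producing the extension.

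To build $\tilde\bu$, I would work in the tubular coordinates $(s,\rho,\theta)$ of Section \ref{geometry}, valid on the fixed neighborhood $\{\bx:\mathrm{dist}(\bx,\X)<r_{\max}\}$ with $r_{\max}$ depending only on $c_\Gamma,\kappa_{\max}$. Inside $\Sigma_\epsilon=\{\rho<\epsilon\}$, define $\tilde\bu$ by radial reflection across $\Gamma_\epsilon$, namely $\tilde\bu(s,\rho,\theta)=\bu(s,2\epsilon-\rho,\theta)$, together with a smooth radial cutoff to resolve the coordinate degeneracy at the centerline $\rho=0$. Because the tubular Jacobian and metric coefficients differ from their Euclidean counterparts by factors of $1+O(\epsilon\kappa_{\max})$, the resulting $\tilde\bu$ lies in $D^{1,2}(\R^3)$ with $\norm{\nabla\tilde\bu}_{L^2(\R^3)}\le C(c_\Gamma,\kappa_{\max})\norm{\nabla\bu}_{L^2(\Omega_\epsilon)}$. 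However, $\E$ is \emph{not} literally preserved under a radial reflection in curved coordinates: one picks up correction terms involving Christoffel symbols of order $\kappa_{\max}$ acting on $\nabla\bu$, all supported on the reflection shell $\{\epsilon\le\rho\le 2\epsilon\}\subset\Omega_\epsilon$.

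The main obstacle is absorbing these correction terms back into $\norm{\E(\bu)}_{L^2(\Omega_\epsilon)}$ without introducing an $\epsilon$-dependent constant. My remedy is to subtract from $\bu$ its best-fit rigid motion $\bm{r}_\bu=\bv_\bu+\bm{\omega}_\bu\times\bx$ on the shell \emph{before} reflecting, using a local Korn--Poincar\'e inequality on the shell. Rescaling the shell to unit thickness by $\rho\mapsto\rho/\epsilon$ and $s\mapsto s/\epsilon$ turns it into a fixed annular tube whose curvature is at most $\epsilon\kappa_{\max}\le 1$, so the rescaled Korn--Poincar\'e constant depends only on the fixed aspect ratio and on $\kappa_{\max}$; returning to physical scale yields $\norm{\bu-\bm{r}_\bu}_{L^2(\text{shell})}\le C\epsilon\norm{\E(\bu)}_{L^2(\text{shell})}$. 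Reflecting $\bu-\bm{r}_\bu$ and adding $\bm{r}_\bu$ back on $\Sigma_\epsilon$ (which has $\E=0$) produces an extension with $\norm{\E(\tilde\bu)}_{L^2(\R^3)}\le C(c_\Gamma,\kappa_{\max})\norm{\E(\bu)}_{L^2(\Omega_\epsilon)}$, the Christoffel-type error terms absorbed via this bound on the shell residual together with the tubular volume element $\sim\epsilon\,d\rho\,d\theta\,ds$. Applying the $\R^3$-Korn inequality to $\tilde\bu$ gives $\norm{\nabla\bu}_{L^2(\Omega_\epsilon)} \le \norm{\nabla\tilde\bu}_{L^2(\R^3)}\le\sqrt{2}\,\norm{\E(\tilde\bu)}_{L^2(\R^3)} \le C\norm{\E(\bu)}_{L^2(\Omega_\epsilon)}$, as claimed.
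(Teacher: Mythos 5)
The decisive gap is in the shell Korn--Poincar\'e estimate $\|\bu-\bm{r}_\bu\|_{L^2(\text{shell})}\le C\epsilon\|\E(\bu)\|_{L^2(\text{shell})}$ with $C$ independent of $\epsilon$, on which the whole absorption step hinges; this estimate is false. The rescaling $\rho\mapsto\rho/\epsilon$, $s\mapsto s/\epsilon$ does \emph{not} produce a fixed domain: the arclength coordinate $s$ lives on $\T=\R/\Z$ of fixed circumference one, so the rescaled shell is an annular tube of unit cross-section but of arclength $1/\epsilon$, growing unboundedly as $\epsilon\to 0$. The Korn--Poincar\'e constant of a tube of length $L$ grows linearly in $L$ (a slowly varying torsion about the tube axis realizes this), so the rescaled constant is $\sim 1/\epsilon$ and undoing the scaling yields only $\|\bu-\bm{r}_\bu\|_{L^2(\text{shell})}\le C\|\E(\bu)\|_{L^2(\text{shell})}$, with no factor of $\epsilon$. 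Concretely, take mean-zero $\alpha\in C^\infty(\T)$ and set $\bu=\alpha(s)\rho\,\be_\theta(s,\theta)$ in the tubular neighborhood of $\X$, cut off smoothly for $\rho\sim r_{\max}$. On the shell $\{\epsilon\le\rho\le 2\epsilon\}$ the $(\rho,\theta)$-block of $\nabla\bu$ is purely antisymmetric, so $\E(\bu)$ comes only from the $\p_s$-direction and has size $\sim(\alpha'+\kappa\alpha)\rho$; one computes $\|\bu\|_{L^2(\text{shell})}\sim\epsilon^2\|\alpha\|_{L^2(\T)}$ while $\|\E(\bu)\|_{L^2(\text{shell})}\sim\epsilon^2\|\alpha'\|_{L^2(\T)}$, and since $\alpha$ has zero mean there is essentially no global rigid motion to subtract. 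Your claimed bound would then force $\|\alpha\|_{L^2(\T)}\le C\epsilon\|\alpha'\|_{L^2(\T)}$, which fails for $\alpha=\sin(2\pi s)$ once $\epsilon<1/(2\pi C)$. Without the $\epsilon$ gain, the cutoff term $\frac{1}{\epsilon}\chi'(\rho/\epsilon)(\bu-\bm{r}_\bu)$ contributes $\sim\frac{1}{\epsilon}\|\E(\bu)\|_{L^2(\Omega_\epsilon)}$ to $\|\E(\tilde\bu)\|$, and the extension's strain is not controlled uniformly in $\epsilon$.

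A secondary problem, present already with zero curvature: component-wise even reflection $\tilde u_j(\rho)=u_j(2\epsilon-\rho)$ sends $\E(\tilde\bu)_{\rho j}$ to $\frac12(\p_j u_\rho-\p_\rho u_j)$ evaluated at the reflected point, i.e.\ to the \emph{antisymmetric} part of $\nabla\bu$, which is not pointwise controlled by $\E(\bu)$. After subtracting the best-fit rigid motion you can control the shell-averaged antisymmetric part only through the shell's \emph{Korn} constant, which by the same torsion field scales like $1/\epsilon$ (here $\|\nabla\bu\|_{L^2(\text{shell})}\sim\epsilon\|\alpha\|$ but $\|\E(\bu)\|_{L^2(\text{shell})}\sim\epsilon^2\|\alpha'\|$). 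So both the cutoff and the reflection itself introduce exactly the factor $1/\epsilon$ that the lemma forbids, and the proposal does not close. The difficulty is genuine: an $\E$-preserving extension across the thin obstacle with $\epsilon$-uniform constant would essentially encode the Korn inequality one is trying to prove, so some other mechanism than reflection-plus-cutoff is required.
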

The proof of $\epsilon$-independence in the Korn constant is given in Appendix A.2.2 - A.2.3 in \cite{closed_loop}. \\

Finally, we make use of the following pressure estimate.
\begin{lemma}\label{pressure}
For $(\bu,p)$ satisfying the Stokes equations in $\Omega_\epsilon$, we have
\begin{equation}
\norm{p}_{L^2(\Omega_\epsilon)} \le C \norm{\E(\bu)}_{L^2(\Omega_\epsilon)}
\end{equation}
for $C$ independent of $\epsilon$.
\end{lemma}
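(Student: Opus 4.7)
The plan is to establish the estimate via the classical De Rham/Bogovski\u{\i} duality: produce, for a given $q \in L^2(\Omega_\epsilon)$, a test field $\bv_q \in D^{1,2}_0(\Omega_\epsilon)$ satisfying $\div \bv_q = q$ together with the $\epsilon$-uniform bound
\[
\norm{\nabla \bv_q}_{L^2(\Omega_\epsilon)} \le C \norm{q}_{L^2(\Omega_\epsilon)}.
\]
Once such an inf--sup (surjectivity of divergence) constant is in hand, taking $q = p$ and using $\bv_p$ as a test function in the weak form of the Stokes system immediately yields the lemma: since $\bv_p$ vanishes on $\Gamma_\epsilon$ and decays at infinity, no boundary contribution appears, and the identity $\int_{\Omega_\epsilon} p\, \div \bv_p\,d\bx = \int_{\Omega_\epsilon} 2\, \E(\bu):\E(\bv_p)\,d\bx$ gives $\norm{p}_{L^2}^2 \le 2\norm{\E(\bu)}_{L^2}\norm{\E(\bv_p)}_{L^2} \le C \norm{\E(\bu)}_{L^2}\norm{p}_{L^2}$, and we divide through.

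To construct $\bv_q$, I would first extend $q$ by zero to $\widetilde q \in L^2(\R^3)$ and solve $-\Delta \phi = \widetilde q$ in all of $\R^3$ by the Newtonian potential. Calder\'on--Zygmund theory gives $\norm{\nabla^2 \phi}_{L^2(\R^3)} \le C\norm{q}_{L^2(\Omega_\epsilon)}$, so $\bv_0 := -\nabla \phi$ verifies $\div \bv_0 = q$ on $\Omega_\epsilon$ and lies in $D^{1,2}(\Omega_\epsilon)$ with the right norm bound, but it need not vanish on $\Gamma_\epsilon$. I would then correct by $\bv_q := \bv_0 - \bw$, where $\bw$ is a divergence-free field with $\bw\big|_{\Gamma_\epsilon} = \bv_0\big|_{\Gamma_\epsilon}$ and $\bw \to 0$ at infinity, constructed by solving a local Stokes-type problem in a fixed tubular neighborhood of $\Gamma_\epsilon$ of radius on the order of $r_{\max}$ (independent of $\epsilon$) and cutting off smoothly outside.

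The key step, and the main obstacle, is showing that the correction $\bw$ can be built with a bound $\norm{\nabla \bw}_{L^2(\Omega_\epsilon)} \le C\norm{q}_{L^2(\Omega_\epsilon)}$ in which $C$ does not blow up as $\epsilon \to 0$. The strategy is to pass to the local coordinates $(s, \rho, \theta)$ of Section \ref{geometry} and rescale $\rho \mapsto \rho/\epsilon$, so that the problem for $\bw$ becomes one on a domain of fixed geometry (exterior of a unit tube in a straightened neighborhood), where a standard Bogovski\u{\i}-type construction yields $\epsilon$-independent estimates. The compatibility data on $\Gamma_\epsilon$ is controlled via the trace estimate of Lemma \ref{trace2}, which supplies $\norm{\bv_0\big|_{\Gamma_\epsilon}}_{L^2(\Gamma_\epsilon)} \le C\sqrt{\epsilon}\abs{\log\epsilon}^{1/2} \norm{\nabla^2\phi}_{L^2}$; this $\sqrt{\epsilon}$-smallness is exactly what compensates the geometric factors introduced by the rescaling, so the composite bound for $\bv_q$ stays $\epsilon$-uniform up to logarithmic losses that can be absorbed since the construction may be iterated or refined (the decisive estimate is the uniform Korn and Bogovski\u{\i} bound on the rescaled reference domain, which is essentially the same analysis that underlies the Korn constant in Lemma \ref{korn} and the well-posedness arguments of \cite{closed_loop}).

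Finally, once $\bv_q$ is in hand, pairing with the weak Stokes equation, absorbing the $\norm{p}_{L^2}$ factor, and noting that the argument does not use any specific boundary data for $\bu$ (only that $\bu$ satisfies Stokes in the interior of $\Omega_\epsilon$, so boundary terms in the integration by parts are handled by the test function vanishing on $\Gamma_\epsilon$ and at infinity) yields the desired estimate with $\epsilon$-independent constant.
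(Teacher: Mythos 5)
Your reduction is the right idea and matches what the paper does: reduce to an $\epsilon$-uniform right-inverse of the divergence (a Bogovski\u{\i}-type inf--sup estimate), then pair the weak Stokes relation against $\bv_p\in D^{1,2}_0(\Omega_\epsilon)$ with $\div\bv_p = p$; since $\bu$ is divergence-free and $\bv_p$ vanishes on $\Gamma_\epsilon$, the identity $\int_{\Omega_\epsilon} p^2\,d\bx = \int_{\Omega_\epsilon} 2\E(\bu):\E(\bv_p)\,d\bx$ closes the estimate once the divergence constant is uniform in $\epsilon$.

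The gap is that your construction of that right-inverse is not actually carried out. Starting from $\bv_0 = -\nabla\phi$ (right divergence, wrong trace), the decisive step is to produce a divergence-free correction $\bw$ with $\bw\big|_{\Gamma_\epsilon} = \bv_0\big|_{\Gamma_\epsilon}$ and $\norm{\nabla\bw}_{L^2(\Omega_\epsilon)}\le C\norm{q}_{L^2(\Omega_\epsilon)}$ uniformly in $\epsilon$, and none of the ingredients you list delivers this. Lemma \ref{trace2} controls the trace of $\bv_0$ only in $L^2(\Gamma_\epsilon)$, whereas lifting boundary data into $D^{1,2}(\Omega_\epsilon)$ requires $H^{1/2}(\Gamma_\epsilon)$ control, so the $\sqrt{\epsilon}\abs{\log\epsilon}^{1/2}$ smallness you invoke is in the wrong norm. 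Constructing $\bw$ by a local Stokes solve would itself require a uniform-in-$\epsilon$ $H^{1/2}(\Gamma_\epsilon)\to D^{1,2}(\Omega_\epsilon)$ extension bound, a statement of comparable difficulty that is not at hand. The rescaling $\rho\mapsto\rho/\epsilon$ does not map a collar of $\Gamma_\epsilon$ to a fixed reference domain -- the radial extent of the shell becomes $O(1/\epsilon)$ -- so one cannot simply quote a reference Bogovski\u{\i} constant after rescaling. And even granting the heuristics, you are left with residual $\abs{\log\epsilon}$ factors which you propose to ``absorb by iterating or refining''; the lemma asserts a genuinely $\epsilon$-independent constant, and nothing in your argument removes those logarithms. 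The clean route, and the one the paper takes, is to invoke directly the $\epsilon$-uniform solvability of $\div\bz = g$ with $\bz\in D^{1,2}_0$ already established in \cite{closed_loop} (Appendix A.2.5), rather than rederiving it via a Newtonian-potential-plus-correction argument whose correction step is exactly the missing hard part.
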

The proof of this lemma exactly follows the proof of estimate (2.17) in \cite{closed_loop}. 

 %%%%%%%%%%%%%%%%%%%%%%%%%%%%%%%%%%%%%%%%%%%%%%%%%%%%%%%%%%%%%%
%%%%%%%%%%%%%%%%%%%%%%%%%%%%%%%%%%%%%%%%%%%%%%%%%%%%%%%%%%%%%%
 %%%%%%%%%%%%%%%%%%%%%%%%%%%%%%%%%%%%%%%%%%%%%%%%%%%%%%%%%%%%%%
%%%%%%%%%%%%%%%%%%%%%%%%%%%%%%%%%%%%%%%%%%%%%%%%%%%%%%%%%%%%%% 
 %%%%%%%%%%%%%%%%%%%%%%%%%%%%%%%%%%%%%%%%%%%%%%%%%%%%%%%%%%%%%%
%%%%%%%%%%%%%%%%%%%%%%%%%%%%%%%%%%%%%%%%%%%%%%%%%%%%%%%%%%%%%%
\section{Proof of Lemma \ref{v_omega_bd} and a corollary}\label{2ndLemma} 
Here we prove Lemma \ref{v_omega_bd} and make note of a corollary which allows us to obtain a useful bound for functions in $\mc{R}_\epsilon$. This corollary, along with the Korn inequality (Lemma \ref{korn}) and pressure estimate (Lemma \ref{pressure}), then allows us to prove Theorem \ref{SB_PDE_well}.

\begin{proof}[Proof of Lemma \ref{v_omega_bd}:]
Note that Lemma \ref{v_omega_bd} is obviously true when $\bv=\bm{\omega}=0$; thus we can assume that at least one of $\bv,\bm{\omega}$ is nonzero. Suppose that Lemma \ref{v_omega_bd} does not hold. Then we may choose a sequence of triples $(\bv_k,\bm{\omega}_k,\X_k(s))$ such that the following properties hold for each $k=1,2,3,\dots$. First, $\bv_k,\bm{\omega}_k\in\R^3$ satisfy $\abs{\bv_k}^2+\abs{\bm{\omega}_k}^2=1$, and $\X_k(s)$ is a closed curve satisfying the geometric constraints of Section \ref{geometry} -- in particular, $\abs{\X_k''}\le \kappa_{\max}$. In addition,
 \[ 1 = \abs{\bv_k}^2+\abs{\bm{\omega}_k}^2 > k^2\int_{\T}\abs{\bv_k+\bm{\omega}_k\times \X_k(s)}^2 \, ds.\]
 Then 
 \[ \int_{\T}\abs{\bv_k+\bm{\omega}_k\times \X_k(s)}^2 \, ds < \frac{1}{k^2} \to 0\]
 as $k\to\infty$. Since $\bv_k,\bm{\omega}_k$ are just vectors in $\R^3$, some limit $\bv_\infty,\bm{\omega}_\infty$ exists. Furthermore, since each $\X_k$ is controlled in $C^2$ by $\kappa_{\max}$, we have that (passing to a subsequence) $\X_k\to \X_\infty$ in $C^1$ for some closed, unit length curve $\X_\infty(s)$. Thus
 \[ \int_{\T}\abs{\bv_\infty +\bm{\omega}_\infty \times \X_\infty(s)}^2 \, ds =0, \]
 and therefore $\bm{\omega}_\infty\times \X_\infty(s)\equiv -\bv_\infty$. But $\bm{\omega}_\infty$ and $\bv_\infty$ are both constant vectors with $\abs{\bv_\infty}^2+\abs{\bm{\omega}_\infty}^2=1$, while $\X_\infty(s)$ necessarily has nonzero curvature. Thus $\bm{\omega}_\infty\times \X_\infty(s)$ cannot identically equal the constant vector $-\bv_\infty$. Furthermore, because $\X_k$ was allowed to vary among curves satisfying the constraints of Section \ref{geometry}, the constant $C$ arising in Lemma \ref{v_omega_bd} depends only on $c_\Gamma$ and $\kappa_{\max}$. 
\end{proof}

%%%%%%%%%%%%%%%%%%%%%%%%
Given $\bm{F}$ and $\bm{T}$, as an immediate corollary to Lemma \ref{v_omega_bd} we obtain the following useful bound for any function $\bm{\varphi}\in\mc{R}_\epsilon$. 
\begin{corollary}\label{omegaP}
Consider $\bm{\varphi}\in \mc{R}_\epsilon$ with boundary value denoted by $\bv_\varphi+\bm{\omega}_\varphi\times \X(s)$. Then 
\begin{equation}\label{curved_est}
|\bv_\varphi|+|\bm{\omega}_\varphi| \le C |\log\epsilon|^{1/2} \norm{\E(\bm{\varphi})}_{L^2(\Omega_\epsilon)}
\end{equation}
where $C$ depends only on $c_\Gamma$ and $\kappa_{\max}$. 
\end{corollary}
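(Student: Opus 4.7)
The corollary should follow from chaining together three results already in hand: Lemma \ref{v_omega_bd}, the $L^2(\T)$ trace inequality (Lemma \ref{trace1}), and the Korn inequality (Lemma \ref{korn}). The observation that makes this work is that $\mc{R}_\epsilon \subset \A_\epsilon$, since the boundary value $\bv_\varphi + \bm{\omega}_\varphi \times \X(s)$ is manifestly independent of the surface angle $\theta$. This means the $\A_\epsilon$ trace inequality is available for any $\bm{\varphi} \in \mc{R}_\epsilon$.

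The plan is as follows. First, I would apply Lemma \ref{v_omega_bd} with $\bv = \bv_\varphi$ and $\bm{\omega} = \bm{\omega}_\varphi$ to obtain
\[
|\bv_\varphi| + |\bm{\omega}_\varphi| \le C \norm{\bv_\varphi + \bm{\omega}_\varphi \times \X}_{L^2(\T)},
\]
with $C$ depending only on $c_\Gamma$ and $\kappa_{\max}$. Next, I would identify the right-hand side as $\norm{{\rm Tr}(\bm{\varphi})}_{L^2(\T)}$, invoking the fact that $\bm{\varphi}\in \mc{R}_\epsilon$ has $\theta$-independent boundary trace equal to $\bv_\varphi + \bm{\omega}_\varphi \times \X(s)$. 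Then, since $\mc{R}_\epsilon \subset \A_\epsilon$, Lemma \ref{trace1} gives
\[
\norm{{\rm Tr}(\bm{\varphi})}_{L^2(\T)} \le C |\log\epsilon|^{1/2} \norm{\nabla \bm{\varphi}}_{L^2(\Omega_\epsilon)}.
\]
Finally, applying Lemma \ref{korn} to control $\norm{\nabla \bm{\varphi}}_{L^2(\Omega_\epsilon)}$ by $C\norm{\E(\bm{\varphi})}_{L^2(\Omega_\epsilon)}$ and chaining the three inequalities yields the bound \eqref{curved_est}, with the final constant depending only on $c_\Gamma$ and $\kappa_{\max}$.

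There is essentially no obstacle here: the work was done in Lemma \ref{v_omega_bd}, and the remaining steps are direct applications of previously stated estimates. The only point requiring a brief word is the inclusion $\mc{R}_\epsilon \subset \A_\epsilon$, which justifies applying the $\A_\epsilon$-trace inequality rather than the (weaker in this scaling) $L^2(\Gamma_\epsilon)$ trace inequality of Lemma \ref{trace2}. This distinction is what produces the mild $|\log\epsilon|^{1/2}$ factor rather than a factor with a negative power of $\epsilon$, and it is precisely this bound that will be used downstream in the proof of Theorem \ref{SB_PDE_well}.
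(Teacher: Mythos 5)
Your proof is correct and follows exactly the same chain as the paper: Lemma \ref{v_omega_bd}, then the $\A_\epsilon$ trace inequality (Lemma \ref{trace1}) applied via $\mc{R}_\epsilon \subset \A_\epsilon$, then the Korn inequality (Lemma \ref{korn}). The remark on why Lemma \ref{trace1} rather than Lemma \ref{trace2} is the right tool is accurate, though the paper leaves it implicit.
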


\begin{proof}
Using Lemma \ref{v_omega_bd} along with the slender body trace estimate (Lemma \ref{trace1}) and Korn inequality (Lemma \ref{korn}), we immediately obtain
\begin{align*} 
\abs{\bv_\varphi}+ \abs{\bm{\omega}_\varphi} &\le \norm{\bv_\varphi+\bm{\omega}_\varphi\times\X}_{L^2(\T)} \le C\abs{\log\epsilon}^{1/2}\norm{\nabla \bm{\varphi}}_{L^2(\Omega_\epsilon)} \\
&\le C\abs{\log\epsilon}^{1/2}\norm{\E(\bm{\varphi})}_{L^2(\Omega_\epsilon)}
\end{align*}
for $C$ depending only on $c_\Gamma$ and $\kappa_{\max}$. 
\end{proof}

Using Corollary \ref{omegaP} and the variational formulation of \eqref{SB_PDE}, we may now prove Theorem \ref{SB_PDE_well}. 
\begin{proof}[Proof of Theorem \ref{SB_PDE_well}]
We first show the existence of a weak solution $\bu^{\rm p}\in \mc{R}_\epsilon^{\div}$ satisfying Definition \ref{rigid_weak}. Note that the bilinear form appearing on the left hand side of Definition \ref{rigid_weak} is bounded on $\mc{R}_\epsilon^{\div}$, as
\[ \abs{\int_{\Omega_\epsilon} 2\, \E(\bu^{\rm p}): \E(\bm{\varphi}) \, d\bx} \le 2\norm{ \E(\bu^{\rm p})}_{L^2(\Omega_\epsilon)}\norm{\E(\bm{\varphi})}_{L^2(\Omega_\epsilon)} \le 2\norm{ \nabla\bu^{\rm p} }_{L^2(\Omega_\epsilon)}\norm{\nabla\bm{\varphi}}_{L^2(\Omega_\epsilon)}.\]
Coercivity of the bilinear form also follows by the Korn inequality (Lemma \ref{korn}). Furthermore, using Corollary \ref{omegaP}, the linear functional on the right hand side of Definition \ref{rigid_weak} is bounded for $\bm{\varphi}\in \mc{R}_\epsilon^{\div}$, as 
\[ \abs{ \bv_\varphi \cdot\bm{F} + \bm{\omega}_\varphi\cdot \bm{T}} \le C\abs{\log\epsilon}^{1/2}\norm{\E(\bm{\varphi})}_{L^2(\Omega_\epsilon)}(\abs{\bm{F}} +\abs{\bm{T}}) \le C\abs{\log\epsilon}^{1/2}\norm{\nabla\bm{\varphi}}_{L^2(\Omega_\epsilon)}(\abs{\bm{F}} +\abs{\bm{T}}). \]
Then, by the Lax-Milgram theorem, there exists a unique weak solution $\bu^{\rm p}\in \mc{R}_\epsilon^{\div}$ to \eqref{SB_PDE}. \\

In addition, using the variational form of \eqref{SB_PDE} along with Corollary \ref{omegaP}, we have that $\bu^{\rm p}$ satisfies
\begin{align*}
\int_{\Omega_\epsilon} \abs{\E(\bu^{\rm p})}^2 \, d\bx &= \int_{\Gamma_\epsilon} (\bv^{\rm p}+\bm{\omega}^{\rm p}\times \X(s))\cdot(\bm{\sigma}^{\rm p}\bm{n}) \, dS \\
&= \bv^{\rm p}\cdot\int_{\Gamma_\epsilon} \bm{\sigma}^{\rm p}\bm{n} \, dS + \bm{\omega}^{\rm p}\cdot \int_{\T} \X(s)\times \bigg(\int_0^{2\pi} (\bm{\sigma}^{\rm p}\bm{n}) \, \mc{J}_\epsilon(s,\theta) d \theta \bigg) ds \\
&\le \abs{\bv^{\rm p}}\abs{\bm{F}} + \abs{\bm{\omega}^{\rm p}}\abs{\bm{T}} \le C\abs{\log\epsilon}^{1/2}\norm{\E(\bu^{\rm p})}_{L^2(\Omega_\epsilon)}(\abs{\bm{F}} +\abs{\bm{T}}) \\
&\le \frac{1}{2}\norm{\E(\bu^{\rm p})}_{L^2(\Omega_\epsilon)}^2 + C\abs{\log\epsilon}(\abs{\bm{F}}^2 +\abs{\bm{T}}^2),
\end{align*}
where we have used Young's inequality in the last line. We thus obtain the estimate 
\begin{equation}\label{Eup_est}
\norm{\E(\bu^{\rm p})}_{L^2(\Omega_\epsilon)} \le C\abs{\log\epsilon}^{1/2} ( \abs{\bm{F}}+\abs{\bm{T}} ).
\end{equation}

As noted after Definition \ref{rigid_weak}, the existence of a unique corresponding weak pressure $p^{\rm p}\in L^2(\Omega_\epsilon)$ satisfying Definition \ref{rigid_weak_p} as well as Lemma \ref{pressure} follows by an essentially identical proof to that appearing in Section 2.2 of \cite{closed_loop}. \\

Combining \eqref{Eup_est} with Lemmas \ref{korn} and \ref{pressure} then yields the bound \eqref{totalPest}.
\end{proof}

 %%%%%%%%%%%%%%%%%%%%%%%%%%%%%%%%%%%%%%%%%%%%%%%%%%%%%%%%%%%%%%
%%%%%%%%%%%%%%%%%%%%%%%%%%%%%%%%%%%%%%%%%%%%%%%%%%%%%%%%%%%%%%
%%%%%%%%%%%%%%%%%%%%%%%%%%%%%%%%%%%%%%%%%%%%%%%%%%%%%%%%%%%%%%
\section{Classical versus slender body PDE description of rigid motion}\label{3rdLemma}
Using the variational framework of Section \ref{variational0} along with Lemma \ref{v_omega_bd}, we prove Lemma \ref{true_vs_SB} comparing the classical PDE \eqref{rigid} and slender body PDE \eqref{SB_PDE} descriptions of rigid slender body motion.

\begin{proof}[Proof of Lemma \ref{true_vs_SB}:]
The difference $\overline\bu= \bu^{\rm r}- \bu^{\rm p}$, $\overline p= p^{\rm r}-  p^{\rm p}$, $\overline{\bm{\sigma}}=\bm{\sigma}^{\rm r}- \bm{\sigma}^{\rm p}$, $\overline{\bm{\omega}}= \bm{\omega}^{\rm r}- \bm{\omega}^{\rm p}$, $\overline\bv= \bv^{\rm r}- \bv^{\rm p}$ satisfies the PDE
\begin{equation}\label{diff_PDE}
\begin{aligned}
-\Delta \overline\bu +\nabla \overline p &=0, \quad \div \overline\bu =0 \hspace{2.3cm} \text{in } \Omega_\epsilon \\
\overline\bu(\bx) &= \overline\bv+ \overline{\bm{\omega}}\times \bx + \epsilon \bm{\omega}^{\rm p} \times \be_\rho, \qquad \bx \in \Gamma_\epsilon \\
\overline\bu(\bx) &\to 0 \hspace{4.3cm} \text{as }\abs{\bx}\to \infty \\
\int_{\Gamma_\epsilon} \overline{\bm{\sigma}}\bm{n} \, dS &=0, \quad \int_{\Gamma_\epsilon} \bx \times (\overline{\bm{\sigma}}\bm{n}) \, dS = - \epsilon \int_{\Gamma_\epsilon} \be_\rho \times (\bm{\sigma}^{\rm p}\bm{n}) \; dS.
\end{aligned}
\end{equation}

Then, multiplying \eqref{diff_PDE} by $\overline\bu$ and integrating by parts, we have that $\overline\bu$ satisfies 
\begin{equation}\label{diff_bound0}
\begin{aligned}
\int_{\Omega_\epsilon} 2|\E(\overline\bu)|^2 \, d\bx &= \int_{\Gamma_\epsilon} \big( \overline\bv+ \overline{\bm{\omega}}\times \bx + \epsilon\bm{\omega}^{\rm p}\times\be_\rho \big) \cdot(\overline{\bm{\sigma}}\bm{n}) \, dS \\
&= \overline\bv\cdot\int_{\Gamma_\epsilon}\overline{\bm{\sigma}}\bm{n} \, dS + \overline{\bm{\omega}}\cdot\int_{\Gamma_\epsilon} \bx\times(\overline{\bm{\sigma}}\bm{n}) \, dS + \epsilon\bm{\omega}^{\rm p}\cdot\int_{\Gamma_\epsilon} \be_\rho\times(\overline{\bm{\sigma}}\bm{n}) \, dS \\
&= -\epsilon \overline{\bm{\omega}}\cdot\int_{\Gamma_\epsilon} \be_\rho \times (\bm{\sigma}^{\rm p}\bm{n}) \, dS + \epsilon \bm{\omega}^{\rm p}\cdot\int_{\Gamma_\epsilon} \be_\rho \times(\overline{\bm{\sigma}}\bm{n}) \, dS.
\end{aligned}
\end{equation}

To estimate the right hand side of \eqref{diff_bound0}, we first need to define a smooth cutoff function $\phi(\rho)$ satisfying
\begin{equation}\label{cutoff_def}
\phi(\rho) = \begin{cases}
1, & \rho < 2 \\
0, & \rho >4
\end{cases}
\end{equation}
with smooth decay satisfying 
\begin{equation}\label{phi_decay}
\abs{\frac{d\phi}{d\rho}} \le c_\phi.
\end{equation}
Then for $\bx=\X(s)+\rho\be_\rho(\theta,s)$ in a neighborhood of $\Gamma_\epsilon$, we define $\phi_\epsilon(\rho):= \phi(\rho/\epsilon)$.\\
 
We estimate the second term on the right hand side first, noting that the estimation of the first term will be essentially identical. Using index notation (the subscript $\cdot_{,j}$ signifies $\frac{\p \cdot}{\p x_j}$; sum over repeated indices) along with the divergence theorem, we may write 
\begin{equation}\label{cross_est0}
\begin{aligned} 
\int_{\Gamma_\epsilon} \big(\be_\rho \times(\overline{\bm{\sigma}}\bm{n}) \big)_i \, dS &=  \int_{\Gamma_\epsilon} \varepsilon_{ijk} (e_\rho)_j \overline\sigma_{k\ell} n_\ell \, dS = \int_{\Omega_\epsilon} (\phi \, \varepsilon_{ijk}(e_\rho)_j \overline\sigma_{k\ell})_{,\ell} \, d\bx \\
&= \int_{\Omega_\epsilon} \varepsilon_{ijk}\big( \phi_{,\ell}(e_\rho)_j \overline\sigma_{k\ell} + \phi \,(e_\rho)_{j,\ell} \overline\sigma_{k\ell} \big)\, d\bx.
\end{aligned}
\end{equation}
Here $\varepsilon_{ijk}$ is the alternating symbol
\[ \varepsilon_{ijk} = \begin{cases}
1, & \text{ for even permutations of }i,j,k \\
-1, & \text{ for odd permutations of }i,j,k \\
0, & \text{ if } i=j,j=k, \text{ or }k=i, \end{cases} \]
and we have used that $\overline{\bm{\sigma}}$ is divergence-free. 

%\begin{remark}
%Although $\overline{\bm{\sigma}}$ only belongs to $L^2(\Omega_\epsilon)$ {\it a priori}, we may justify equation \eqref{cross_est0} because $\div\overline{\bm{\sigma}}=0\in L^2(\Omega_\epsilon)$, and therefore $\overline{\bm{\sigma}}$ belongs to the space $H_{\div}(\Omega_\epsilon):= \{\bm{\varphi}\in L^2(\Omega_\epsilon) \, : \, \div \bm{\varphi}\in L^2(\Omega_\epsilon) \}$. Then we may use the divergence theorem for $H_{\div}$ functions given in Lemma IV.3.3 of \cite{boyer2012mathematical} to obtain \eqref{cross_est0}.  
%\end{remark}

Now, due to the cutoff $\phi_\epsilon$, the integrand on the right hand side of \eqref{cross_est0} is supported only within the region 
\[ \mc{O}_\epsilon := \big\{\X(s)+\rho \be_\rho(s,\theta) \; : \; s\in\T, \; \epsilon \le \rho\le4\epsilon, \; 0\le \theta<2\pi \big\} \]
with $\abs{\mc{O}_\epsilon}= C\epsilon^2$ for some $C$ depending only on $c_\Gamma$ and $\kappa_{\max}$. \\

Within $\mc{O}_\epsilon$, defining $\wh\kappa(s,\theta):= \kappa_1(s)\cos\theta+\kappa_2(s)\sin\theta$, we have 
\begin{equation}\label{grad_rho}
\begin{aligned}
\abs{\nabla \be_\rho(s,\theta)} &=  \abs{\frac{1}{\rho}\frac{\p \be_\rho}{\p\theta}\be_\theta^{\rm T}+ \frac{1}{1-\rho\wh\kappa}\bigg(\frac{\p\be_\rho}{\p s} - \kappa_3\frac{\p \be_\rho}{\p\theta} \bigg)\be_t^{\rm T} } \\
&= \abs{\frac{1}{\rho}\be_\theta\be_\theta^{\rm T} - \frac{1}{1-\rho\wh\kappa}(\kappa_1\cos\theta+\kappa_2\sin\theta) \be_t\be_t^{\rm T} } \le \frac{1}{\epsilon}+ 4\kappa_{\max},
 \end{aligned}
 \end{equation}
 where the final $\kappa_{\max}$ bound is shown in Appendix \ref{reg_lem}.  \\

Using \eqref{phi_decay}, \eqref{grad_rho}, and Cauchy-Schwarz, we may estimate \eqref{cross_est0} as
\begin{equation}\label{cross_bound}
\begin{aligned}
\abs{\int_{\Gamma_\epsilon} \be_\rho \times(\overline{\bm{\sigma}}\bm{n}) \, dS} &\le \int_{\mc{O}_\epsilon} \big(\abs{\phi_\epsilon\nabla\be_\rho}+\abs{\nabla\phi_\epsilon})\abs{\overline{\bm{\sigma}}} \, d\bx \\
&\le \abs{\mc{O}_\epsilon}^{1/2} \bigg(\frac{1}{\epsilon} + 4\kappa_{\max} + \frac{c_\phi}{\epsilon} \bigg)\bigg(\int_{\Omega_\epsilon} \abs{\overline{\bm{\sigma}}}^2 \, d\bx\bigg)^{1/2} \\
& \le C \bigg(\int_{\Omega_\epsilon} \abs{\overline{\bm{\sigma}}}^2 \, d\bx\bigg)^{1/2}
\end{aligned}
\end{equation}
where $C$ depends only on the shape of $\X$ -- in particular, $c_\Gamma$ and $\kappa_{\max}$. Finally, using Lemma \ref{pressure}, we obtain
\begin{equation}\label{cross_bound2}
\abs{\int_{\Gamma_\epsilon} \be_\rho \times(\overline{\bm{\sigma}}\bm{n}) \, dS} \le C \bigg(\int_{\Omega_\epsilon} \big(\abs{\E(\overline\bu)}^2 + \overline p^2 \big) \, d\bx\bigg)^{1/2} \le C \bigg(\int_{\Omega_\epsilon} \abs{\E(\overline\bu)}^2 \, d\bx\bigg)^{1/2}.
\end{equation}

Following exactly the same procedure, we can also show 
\begin{equation}\label{cross_bound3}
\abs{\int_{\Gamma_\epsilon} \be_\rho \times(\bm{\sigma}^{\rm p}\bm{n}) \, dS} \le C \bigg(\int_{\Omega_\epsilon} \abs{\E(\bu^{\rm p})}^2 \, d\bx\bigg)^{1/2}.
\end{equation}

Furthermore, in the same way as in Lemma \ref{omegaP}, it can be shown that 
\begin{equation}\label{omegav_bound}
\abs{\overline{\bm{\omega}}}+ \abs{\overline\bv} \le C\norm{\overline\bv+\overline{\bm{\omega}}\times\bx+ \epsilon\bm{\omega}^{\rm p}\times \be_\rho}_{L^2(\Gamma_\epsilon)} \le C\sqrt{\epsilon}\abs{\log\epsilon}^{1/2}\norm{\E(\overline\bu)}_{L^2(\Omega_\epsilon)}.
\end{equation}
Note that the first inequality holds via a similar contradiction as in the proof of Lemma \ref{v_omega_bd}, except here we must use that $\overline\bv+\overline{\bm{\omega}}\times\bx+ \epsilon\bm{\omega}^{\rm p}\times \be_\rho=\overline\bv+\overline{\bm{\omega}}\times\X(s)+ \epsilon\bm{\omega}^{\rm r}\times \be_\rho(s,\theta)$ for $\bx\in \Gamma_\epsilon$. The analogous contradiction arises from the fact that $\epsilon\bm{\omega}^{\rm r}\times \be_\rho(s,\theta)$ depends on $\theta$, whereas $\overline\bv+\overline{\bm{\omega}}\times\X(s)$) does not. For the second inequality we have used the $L^2(\Gamma_\epsilon)$ trace estimate (Lemma \ref{trace2}) and the Korn inequality (Lemma \ref{korn}).\\

Then, using \eqref{cross_bound2} and \eqref{cross_bound3} in \eqref{diff_bound0} along with Lemma \ref{omegaP} and \eqref{omegav_bound}, we have
\begin{equation}\label{diff_bound1}
\begin{aligned}
\int_{\Omega_\epsilon} 2|\E(\overline\bu)|^2 \, d\bx &\le \epsilon C\abs{\overline{\bm{\omega}}}\bigg(\int_{\Omega_\epsilon}\abs{\E(\bu^{\rm p})}^2 \, d\bx \bigg)^{1/2} + \epsilon C\abs{\bm{\omega}^{\rm p}} \bigg(\int_{\Omega_\epsilon}\abs{\E(\overline\bu)}^2 \, d\bx \bigg)^{1/2} \\
&\le \epsilon\abs{\log\epsilon}^{1/2}C\bigg(\int_{\Omega_\epsilon}\abs{\E(\bu^{\rm p})}^2 \, d\bx \bigg)^{1/2}\bigg(\int_{\Omega_\epsilon}\abs{\E(\overline\bu)}^2 \, d\bx \bigg)^{1/2} \\
&\le \epsilon^2\abs{\log\epsilon}C\int_{\Omega_\epsilon}\abs{\E(\bu^{\rm p})}^2 \, d\bx + \int_{\Omega_\epsilon}\abs{\E(\overline\bu)}^2 \, d\bx,
\end{aligned}
\end{equation}
where we have used Young's inequality in the last line. Then, using \eqref{Eup_est}, we obtain
\begin{equation}
\norm{\E(\overline\bu)}_{L^2(\Omega_\epsilon)} \le \epsilon\abs{\log\epsilon}C( \abs{\bm{T}}+ \abs{\bm{F}}).
\end{equation}

Finally, using \eqref{omegav_bound} again, we obtain Lemma \ref{true_vs_SB}.
\end{proof}

%%%%%%%%%%%%%%%%%%%%%%%%%%%%%%%%%%%%%%%%%%%%%%%%%%%%%%%%%%%%%%
%%%%%%%%%%%%%%%%%%%%%%%%%%%%%%%%%%%%%%%%%%%%%%%%%%%%%%%%%%%%%%
%%%%%%%%%%%%%%%%%%%%%%%%%%%%%%%%%%%%%%%%%%%%%%%%%%%%%%%%%%%%%%
\section{Proof of Lemma \ref{ups_up_err} }\label{1stLemma}
Finally, we prove Lemma \ref{ups_up_err} comparing the rigid slender body PDE \eqref{SB_PDE} to the intermediary slender body PDE \eqref{SB_PDE_ps}. \\

We begin by defining 
\begin{equation}\label{fp_def}
\bm{f}^{\rm p}(s):=\int_0^{2\pi} (\bm{\sigma}^{\rm p}\bm{n}) \, \mc{J}_\epsilon(s,\theta) d\theta
\end{equation}
for $\bm{\sigma}^{\rm p}$ as in \eqref{SB_PDE}, and establish the following: 
\begin{lemma}\label{fp_FT}
Suppose the slender body $\Sigma_\epsilon$ is as in Section \ref{geometry} -- in particular, $\X\in C^3(\T)$. Let the total force $\bm{F}$ and torque $\bm{T}$ be given, and let $\bm{f}^{\rm p}$ be as defined in \eqref{fp_def}. Then 
\begin{equation}
 \norm{\bm{f}^{\rm p}}_{L^2(\T)} \le C\abs{\log\epsilon}^{3/2}(\abs{\bm{F}}+\abs{\bm{T}})
 \end{equation}
 for $C$ depending only on $c_\Gamma$, $\kappa_{\max}$, and $\xi_{\max}$.  
 \end{lemma}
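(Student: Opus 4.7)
The plan is a three-step reduction: pass from the line force density on $\T$ to the stress traction on $\Gamma_\epsilon$ via Cauchy--Schwarz, pass from the surface trace of the stress to its $D^{1,2}(\Omega_\epsilon)$ norm via the trace inequality of Lemma \ref{trace2}, and then close the estimate using the higher regularity Lemma \ref{high_reg} for solutions of the rigid slender body PDE \eqref{SB_PDE}. Each step is lossy by a factor involving $\epsilon$ and $|\log\epsilon|$, and the budget works out to exactly $|\log\epsilon|^{3/2}$.

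First, using Cauchy--Schwarz in $\theta$ together with the pointwise bound $\mc{J}_\epsilon(s,\theta)\le 2\epsilon$, one gets
\begin{equation*}
\abs{\bm{f}^{\rm p}(s)}^2 \le 2\pi\int_0^{2\pi} \abs{\bm{\sigma}^{\rm p}\bm{n}}^2 \mc{J}_\epsilon^2\, d\theta \le 4\pi\epsilon \int_0^{2\pi} \abs{\bm{\sigma}^{\rm p}\bm{n}}^2 \mc{J}_\epsilon\, d\theta.
\end{equation*}
Integrating in $s$ and recalling $dS = \mc{J}_\epsilon\, d\theta\, ds$, this reduces the claim to a bound on $\sqrt{\epsilon}\norm{\bm{\sigma}^{\rm p}\bm{n}}_{L^2(\Gamma_\epsilon)}$. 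Next, I would apply Lemma \ref{trace2} componentwise to $\bm{\sigma}^{\rm p} = 2\E(\bu^{\rm p}) - p^{\rm p}{\bf I}$, which decays at infinity with $\bu^{\rm p}$ and $p^{\rm p}$, giving
\begin{equation*}
\norm{\bm{\sigma}^{\rm p}}_{L^2(\Gamma_\epsilon)} \le C\sqrt{\epsilon}\abs{\log\epsilon}^{1/2} \norm{\nabla \bm{\sigma}^{\rm p}}_{L^2(\Omega_\epsilon)}.
\end{equation*}
Finally, invoking Lemma \ref{high_reg} to bound $\norm{\nabla \bm{\sigma}^{\rm p}}_{L^2(\Omega_\epsilon)} \le C\epsilon^{-1}\abs{\log\epsilon}(\abs{\bm{F}}+\abs{\bm{T}})$ and chaining the three inequalities yields
\begin{equation*}
\norm{\bm{f}^{\rm p}}_{L^2(\T)}^2 \le 4\pi\epsilon \cdot C\epsilon \abs{\log\epsilon} \cdot C^2\epsilon^{-2}\abs{\log\epsilon}^2 (\abs{\bm{F}}+\abs{\bm{T}})^2,
\end{equation*}
which gives the desired $\abs{\log\epsilon}^{3/2}$ bound after taking square roots.

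The entire non-trivial content lies in Lemma \ref{high_reg}; the Cauchy--Schwarz and trace steps are routine and the $\epsilon$-exponent there is sharp in this approach. The main obstacle is therefore packaged into the appendix proof of Lemma \ref{high_reg}, which, as the introduction indicates, requires working in the tubular local coordinates around $\X(s)$, deriving commutator estimates for tangential derivatives along $\Gamma_\epsilon$, and tracking precisely how normal differentiations cost one inverse power of $\epsilon$ (up to logs) because of the thinness of the body. One subtlety worth flagging is the need to ensure $\bm{\sigma}^{\rm p}$ actually lies in $D^{1,2}(\Omega_\epsilon)$ componentwise (so that Lemma \ref{trace2} applies), which follows from $\nabla \bm{\sigma}^{\rm p}\in L^2$ provided by Lemma \ref{high_reg} together with the decay of $(\bu^{\rm p},p^{\rm p})$ at infinity and the Sobolev embedding $D^{1,2}\subset L^6$ in $\R^3$ (Lemma \ref{sobolev}).
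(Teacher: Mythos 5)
Your proof is correct and follows essentially the same chain as the paper: Cauchy--Schwarz in $\theta$ to pass from $\bm{f}^{\rm p}$ to $\sqrt{\epsilon}\,\norm{\bm{\sigma}^{\rm p}\bm{n}}_{L^2(\Gamma_\epsilon)}$, then Lemma \ref{trace2}, then Lemma \ref{high_reg}, with the $\epsilon$-powers canceling to leave $\abs{\log\epsilon}^{3/2}$. Your closing observation that $\bm{\sigma}^{\rm p}$ must lie in $D^{1,2}(\Omega_\epsilon)$ componentwise for Lemma \ref{trace2} to apply is a point the paper passes over silently, and the justification you give (via $\nabla\bm{\sigma}^{\rm p}\in L^2$, decay, and Lemma \ref{sobolev}) is the right one.
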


%%%%%%%
\begin{proof}
The proof of this lemma relies on a higher regularity estimate for $\bm{\sigma}^{\rm p}$. Note that once Theorem \ref{SB_PDE_well} has been established, we immediately obtain that $\bu^{\rm p}\big|_{\Gamma_\epsilon} = \bv^{\rm p}+\bm{\omega}^{\rm p}\times \X(s)$ is in $C^3(\Omega_\epsilon)$, since $\bv^{\rm p}$ and $\bm{\omega}^{\rm p}$ are just constants in $\R^3$ and the fiber centerline $\X$ is in $C^3(\T)$. Given this $C^3$ Dirichlet data, $\bm{\sigma}^{\rm p}\in H^1(\Omega_\epsilon)$ follows by standard higher regularity arguments for the exterior Stokes Dirichlet boundary value problem (see the proof of Lemma V.4.3 in \cite{galdi2011introduction} or Theorem IV.5.8 in \cite{boyer2012mathematical}). Note that since $\X\in C^3(\T)$, $\bm{\sigma}^{\rm p}$ should in fact be even more regular, but the method we use to show Lemma \ref{high_reg} only allows us to quantify the $\epsilon$-dependence in the estimate for $\norm{\nabla\bm{\sigma}^{\rm p}}_{L^2(\Omega_\epsilon)}$. In particular, we can show the following bound on $\nabla \bm{\sigma}^{\rm p}$. 
\begin{lemma}\label{high_reg}
Given $\Omega_\epsilon$ as in Section \ref{geometry}, the solution $\bm{\sigma}^{\rm p}$ to \eqref{SB_PDE} belongs to $H^1(\Omega_\epsilon)$ and satisfies 
\begin{equation}\label{high_reg_eqn}
\norm{\nabla\bm{\sigma}^{\rm p}}_{L^2(\Omega_\epsilon)} \le \norm{\nabla^2\bu^{\rm p}}_{L^2(\Omega_\epsilon)} + \norm{\nabla p^{\rm p}}_{L^2(\Omega_\epsilon)} \le \frac{C}{\epsilon}\abs{\log\epsilon}^{1/2}\big(\norm{\nabla\bu^{\rm p}}_{L^2(\Omega_\epsilon)} + \norm{p^{\rm p}}_{L^2(\Omega_\epsilon)} \big), 
\end{equation}
where $C$ depends on $c_\Gamma$, $\kappa_{\max}$, and $\xi_{\max}$.
\end{lemma}
The proof of the $\epsilon$-dependence in Lemma \ref{high_reg} is given in Appendix \ref{reg_lem}. \\

Using Lemma \ref{high_reg} and Corollary \ref{omegaP}, we have the higher regularity estimate 
\begin{equation}\label{high_reg2}
\begin{aligned}
\norm{\nabla\bm{\sigma}^{\rm p}}_{L^2(\Omega_\epsilon)} &\le \frac{C}{\epsilon}\abs{\log\epsilon}^{1/2}\bigg(\norm{\nabla\bu^{\rm p}}_{L^2(\Omega_\epsilon)} + \norm{p^{\rm p}}_{L^2(\Omega_\epsilon)} \bigg) \le \frac{C}{\epsilon}\abs{\log\epsilon}(\abs{\bm{F}}+\abs{\bm{T}}).
\end{aligned}
\end{equation}

Now, using that $\mc{J}_\epsilon(s,\theta)>0$ for each $(s,\theta)\in \Gamma_\epsilon$ and the surface measure $\abs{\Gamma_\epsilon} = \int_\T \int_0^{2\pi} \mc{J}_\epsilon(s,\theta) d\theta ds =  \epsilon$, we have 
\begin{align*}
\norm{\bm{f}^{\rm p}}_{L^2(\T)}^2 &= \int_\T \abs{\int_0^{2\pi} \bm{\sigma}^{\rm p}\bm{n} \, \mc{J}_\epsilon(s,\theta) d\theta }^2 ds \le \abs{\Gamma_\epsilon} \int_\T \int_0^{2\pi} \abs{{\rm Tr}(\bm{\sigma}^{\rm p})}^2 \, \mc{J}_\epsilon(s,\theta) d\theta \, ds \\
&\le C\epsilon^2\abs{\log\epsilon}\norm{\nabla\bm{\sigma}^{\rm p}}_{L^2(\Omega_\epsilon)}^2 \le C\abs{\log\epsilon}^3(\abs{\bm{F}}+\abs{\bm{T}})^2.
\end{align*} 
Here we have applied both the $L^2(\Gamma_\epsilon)$ trace inequality (Lemma \ref{trace2}) and the higher regularity estimate \eqref{high_reg2} in the last line. 
\end{proof}

%%%%%%%%%%%%%%%%%%%
With Lemma \ref{fp_FT}, we are now equipped to show Lemma \ref{ups_up_err}.

\begin{proof}[Proof of Lemma \ref{ups_up_err}:]
The proof relies on estimates for the PDE satisfied by the difference between solutions to \eqref{SB_PDE} and \eqref{SB_PDE_ps}. Letting $\wt\bu = \bu^{\rm p,s}-\bu^{\rm p}$, $\wt p = p^{\rm p,s}- p^{\rm p} $, $\wt\bv = \bv^{\rm s}-\bv^{\rm p}$, $\wt{\bm{\omega}} = \bm{\omega}^{\rm s}-\bm{\omega}^{\rm p}$, $\wt{\bm{\sigma}}=\bm{\sigma}^{\rm p,s} - \bm{\sigma}^{\rm p}$, we consider the following boundary value problem:
\begin{equation}\label{wtbu_eqn}
\begin{aligned}
-\Delta \wt\bu +\nabla \wt p &=0, \quad \div \wt \bu =0 \hspace{2.25cm} \text{in } \Omega_\epsilon \\
\wt\bu(\bx) &= \wt\bv + \wt{\bm{\omega}}\times \X(s) + \bm{R}(s), \qquad \bx \in \Gamma_\epsilon \\
\wt\bu(\bx) &\to 0 \hspace{4.3cm} \text{as }\abs{\bx}\to \infty \\
\int_{\Gamma_\epsilon} \wt{\bm{\sigma}}\bm{n} \; dS &= 0, \quad \int_{\T} \X(s)\times \bigg(\int_0^{2\pi}  \wt{\bm{\sigma}}\bm{n} \, \mc{J}_\epsilon(s,\theta) d\theta \bigg) ds = 0,
\end{aligned}
\end{equation}
where $\bm{R}(s):={\rm Tr}(\bu^{\rm p,s})(s) - \big( \bv^{\rm s} + \bm{\omega}^{\rm s}\times \X(s)\big)$ satisfies
\begin{equation}\label{Rest}
\norm{\bm{R}}_{L^2(\T)} \le C\epsilon\abs{\log\epsilon}^{3/2}\norm{\bm{f}^{\rm s}}_{C^1(\T)} + \big\| r_\epsilon[\bm{f}^{\rm s}] \big\|_{L^2(\T)},
\end{equation}
by \eqref{centerline_err}. We consider the variational form of \eqref{wtbu_eqn}: multiplying by \eqref{wtbu_eqn} by $\wt\bu$ and integrating by parts, we have 
\begin{align*}
\int_{\Omega_\epsilon} 2|\E(\wt\bu)|^2 \, d\bx &= \int_{\Gamma_\epsilon} \big(\wt\bv + \wt{\bm{\omega}}\times \X(s) +\bm{R}(s) \big) \cdot(\wt{\bm{\sigma}}\bm{n}) \; dS \\
&= \wt\bv \cdot \int_{\Gamma_\epsilon}(\wt{\bm{\sigma}}\bm{n}) \; dS +  \wt{\bm{\omega}}\cdot\int_{\T} \X(s)\times \bigg(\int_0^{2\pi} (\wt{\bm{\sigma}}\bm{n})\, \mc{J}_\epsilon(s,\theta) d\theta \bigg) ds \\
&\qquad + \int_{\T} \bm{R}(s) \cdot \bigg(\int_0^{2\pi} (\wt{\bm{\sigma}}\bm{n}) \, \mc{J}_\epsilon(s,\theta) d\theta \bigg) ds \\
&=\int_\T \bm{R}(s) \cdot \big( \bm{f}^{\rm s} -\bm{f}^{\rm p} \big) \; ds \le \norm{\bm{R}}_{L^2(\T)} \big( \norm{\bm{f}^{\rm s}}_{L^2(\T)} + \norm{\bm{f}^{\rm p}}_{L^2(\T)} \big) \\
&\le C\big(\epsilon\abs{\log\epsilon}^{3/2}\norm{\bm{f}^{\rm s}}_{C^1(\T)} + \big\| r_\epsilon[\bm{f}^{\rm s}] \big\|_{L^2(\T)}\big)\big(\norm{\bm{f}^{\rm s}}_{L^2(\T)} + \abs{\log\epsilon}^{3/2}(\abs{\bm{F}}+\abs{\bm{T}})  \big) \\
& \le C\big( \epsilon\abs{\log\epsilon}^3\norm{\bm{f}^{\rm s}}_{C^1(\T)}^2+  \epsilon\abs{\log\epsilon}^3(\abs{\bm{F}}+\abs{\bm{T}})^2 + \epsilon^{-1}\big\| r_\epsilon[\bm{f}^{\rm s}] \big\|_{L^2(\T)}^2 \big).
\end{align*}
Here we have used \eqref{Rest} and Lemma \ref{fp_FT} in the second-to-last line.

\begin{remark}
It would seem to make sense to try to bound the difference $\bm{f}^{\rm s} -\bm{f}^{\rm p}$ appearing in the third equality by $\norm{\E(\wt\bu)}_{L^2(\Omega_\epsilon)}$, or try to use an extension $\overline{\bm{R}}(\bx)\in D^{1,2}(\Omega_\epsilon)$ with $\overline{\bm{R}}\big|_{\Gamma_\epsilon} = \bm{R}(s)$ and instead take $\wt\bu - \overline{\bm{R}}$ as a test function in the above variational estimate to get rid of the boundary term. In either case, we run into difficulties in that we only have an $L^2(\T)$ estimate for $\bm{R}(s)$, when at least an $H^{1/2}(\T)$ estimate would be needed. However, as noted in Lemma \ref{high_reg}, bounding the gradient of a function on $\Omega_\epsilon$ incurs an additional factor of $1/\epsilon$. By scaling, an $H^{1/2}(\T)$ estimate for $\bm{R}(s)$ would likely yield the same $\sqrt{\epsilon}$ factor appearing in Lemma \ref{ups_up_err}.   
\end{remark}

Now, using the $L^2(\T)$ trace inequality (Lemma \ref{trace1}), the Korn inequality (Lemma \ref{korn}), and Young's inequality, along the with above $\norm{\E(\wt\bu)}_{L^2(\Omega_\epsilon)}$ estimate, we have 
\begin{align*}
\|{\rm Tr}(\bu^{\rm p,s}) &- (\bv^{\rm p}+\bm{\omega}^{\rm p}\times\X)\|_{L^2(\T)} \le C\abs{\log\epsilon}^{1/2}\norm{\nabla\wt\bu}_{L^2(\Omega_\epsilon)} \le C\abs{\log\epsilon}^{1/2}\norm{\E(\wt\bu)}_{L^2(\Omega_\epsilon)} \\
&\le \sqrt{\epsilon}\abs{\log\epsilon}^{3/2}C\big(\norm{\bm{f}^{\rm s}}_{C^1(\T)}+ \abs{\bm{F}}+\abs{\bm{T}}\big) + C\epsilon^{-1/2}\big\| r_\epsilon[\bm{f}^{\rm s}] \big\|_{L^2(\T)} ,
\end{align*}
yielding Lemma \ref{ups_up_err}. 
\end{proof}

%%%%%%%%%%%%%%%%%%%%%%%%%%%%%%%%%%%%%%%%%%%%%%%%%%%%%%%%%%%%%%
%%%%%%%%%%%%%%%%%%%%%%%%%%%%%%%%%%%%%%%%%%%%%%%%%%%%%%%%%%%%%%
%%%%%%%%%%%%%%%%%%%%%%%%%%%%%%%%%%%%%%%%%%%%%%%%%%%%%%%%%%%%%%
\section{Regularizations}\label{regulars}
In practice, various regularizations $r_\epsilon$ are used to combat the non-invertibility of the slender body integral operator \eqref{SBT_expr}. Here we explore an example from \cite{shelley2000stokesian} which removes (at least the most obvious) invertibility issues of $(\bm{\Lambda}+\bm{K})$ and satisfies Lemma \ref{reps_small}.  \\

In \cite{shelley2000stokesian}, the integral operator $\bm{K}$ is replaced with the operator  
\begin{equation}\label{Kreg} 
\begin{aligned}
\bm{K}_\text{reg}[\bm{f}](s) &=  \frac{1}{4\pi}\big[\be_t\be_t^{\rm T}+({\bf I}+\be_t\be_t^{\rm T})\log(\pi\epsilon/4) \big]{\bm f}(s) \\
&\qquad + \frac{1}{8\pi}\int_{\T} \left(\frac{{\bf I}}{(|\bm{R}_0|^2+\epsilon^2)^{1/2}}+ \frac{\bm{R}_0\bm{R}_0^{\rm T}}{|\bm{R}_0|^2(|\bm{R}_0|^2+\epsilon^2)^{1/2}}\right){\bm f}(s') \, ds'.
\end{aligned}
\end{equation}

We have that this choice of regularization satisfies the following $\epsilon$ bound:
\begin{lemma}\label{reps_small}
For $\bm{K}$ as in \eqref{SBT_expr} and $\bm{K}_\text{reg}$ as in \eqref{Kreg}, we have that $r_\epsilon[\bm{f}](s) = \bm{K}_\text{reg}[\bm{f}](s)-\bm{K}[\bm{f}](s)$ satisfies 
\begin{equation}
\big\|r_\epsilon[\bm{f}]\big\|_{L^2(\T)} \le \epsilon\abs{\log\epsilon}C\norm{\bm{f}}_{C^1(\T)}
\end{equation}
where $C$ depends only on $c_\Gamma$ and $\kappa_{\max}$.
\end{lemma}

\begin{proof}
Denoting a point on the slender body surface $\Gamma_\epsilon$ by $\bm{R}(s,\theta)=\bm{R}_0+\epsilon\be_\rho(s,\theta)$ where $\be_\rho$ is a unit vector normal to $\X(s)$, we can write
\begin{equation}\label{r_eps}
\begin{aligned}
r_\epsilon[\bm{f}](s) &= \bm{K}_\text{reg}[\bm{f}](s)-\bm{K}[\bm{f}](s) \\
&= \bm{K}_\text{reg}[\bm{f}](s) - \bm{K}_R[\bm{f}](s,\theta) + \bm{K}_R[\bm{f}](s,\theta)-\bm{K}[\bm{f}](s)
\end{aligned}
\end{equation}
where
\begin{align*}
\bm{K}_R[\bm{f}](s,\theta) &= \frac{1}{4\pi}\big[\be_t\be_t^{\rm T}+({\bf I}+\be_t\be_t^{\rm T})\log(\pi\epsilon/4) \big]{\bm f}(s) \\
&\qquad + \frac{1}{8\pi}\int_{\T} \left(\frac{{\bf I}}{|\bm{R}|}+ \frac{\bm{R}\bm{R}^{\rm T}-\epsilon^2\be_\rho\be_\rho^{\rm T}}{|\bm{R}|^3}\right){\bm f}(s') \, ds'.
\end{align*}

Now, using Lemma 3.6 and the proof of Proposition 3.10 in \cite{closed_loop}, we have 
\begin{equation}\label{1st_bd}
\abs{\bm{K}_R-\bm{K}} \le \epsilon\abs{\log\epsilon}C\norm{\bm{f}}_{C^1(\T)}
\end{equation}
where $C$ depends on $c_\Gamma$ and $\kappa_{\max}$.\\

To estimate $\abs{\bm{K}_\text{reg}-\bm{K}_R}$, we first define
\begin{equation}\label{IR_def} I_R := \frac{1}{\abs{\bm{R}}} - \frac{1}{(\abs{\bm{R}_0}^2+\epsilon^2)^{1/2}} = \frac{2\epsilon\bm{R}_0\cdot\be_\rho}{\abs{\bm{R}}(\abs{\bm{R}_0}^2+\epsilon^2)^{1/2}((\abs{\bm{R}_0}^2+\epsilon^2)^{1/2}+\abs{\bm{R}})} .
\end{equation}
Now, since we are taking $\X\in C^3(\T)$, we can write 
\begin{equation}\label{expansion}
\bm{R}_0(s,s')=\X(s)-\X(s') = (s-s') \be_t(s) + (s-s')^2\bm{Q}(s,s'), \; \abs{\bm{Q}}\le \frac{\kappa_{\max}}{2},
\end{equation}
and therefore
\begin{equation}\label{IR_bound}
 \abs{I_R} \le C\frac{\epsilon(s-s')^2}{\abs{\bm{R}}^3}.
 \end{equation} 
 
Furthermore, we have the following inequalities: 
\begin{equation}\label{Rineq}
 \abs{\bm{R}} \ge C (\abs{\bm{R}_0}^2+\epsilon^2)^{1/2}, \quad  \abs{\bm{R}} \le \sqrt{2} (\abs{\bm{R}_0}^2+\epsilon^2)^{1/2},
 \end{equation}
 where $C$ depends on $\kappa_{\max}$ and $c_\Gamma$. To prove \eqref{Rineq}, we note that, by \eqref{expansion}, 
 \begin{align*}
  \abs{\bm{R}_0}^2+\epsilon^2 &= (s-s')^2 + 2(s-s')^3 \bm{Q}\cdot\be_{\rm s} +(s-s')^4\abs{\bm{Q}}^2 +\epsilon^2 \\
  & \le (s-s')^2 + \abs{s-s'}^3\kappa_{\max} +(s-s')^4 \frac{\kappa_{\max}}{4} +\epsilon^2\\
  & \le C((s-s')^2+\epsilon^2),
  \end{align*}
  since $\abs{s-s'}\le 1$. Then by Lemma 3.1 in \cite{closed_loop}, we have $\abs{\bm{R}} \ge C((s-s')^2+\epsilon^2)^{1/2} \ge C (\abs{\bm{R}_0}^2+\epsilon^2 )^{1/2}$. Furthermore, by Young's inequality,
 \begin{align*}
 \abs{\bm{R}}^2 = \abs{\bm{R}_0}^2+\epsilon^2 + 2\epsilon \be_\rho\cdot\bm{R}_0 \le 2(\abs{\bm{R}_0}^2+\epsilon^2).
 \end{align*}

Now we write
\begin{align*}
\bm{K}_\text{reg}-\bm{K}_R &=\frac{1}{8\pi}(\bm{J}_1+\bm{J}_2), \\
\bm{J}_1&:= \int_{\T} I_R\bigg[ {\bf I} + \bigg(\frac{1}{\abs{\bm{R}}^{2}}+ \frac{1}{\abs{\bm{R}}(\abs{\bm{R}_0}^2+\epsilon^2)^{1/2}}+ \frac{1}{\abs{\bm{R}_0}^2+\epsilon^2} \bigg)\bm{R}_0\bm{R}_0^{\rm T} \bigg]  \bm{f}(s')\, ds'\\
\bm{J}_2 &:=\int_{\T} \frac{\epsilon(\bm{R}_0\be_\rho^{\rm T}+ \be_\rho\bm{R}_0^{\rm T})}{\abs{\bm{R}}^3} \bigg) \bm{f}(s')\, ds'. 
\end{align*}

Using \eqref{Rineq}, we have
\begin{align*}
\abs{\bm{J}_1} &\le \int_{\T} C \abs{I_R} \norm{\bm{f}}_{C(\T)} \, ds' \le C\epsilon\abs{\log\epsilon}\norm{\bm{f}}_{C(\T)},
\end{align*}
where we have also used \eqref{IR_bound} and Lemma 3.3 in \cite{closed_loop}. \\

Furthermore, by \eqref{expansion} and Lemmas 3.3 and 3.4 in \cite{closed_loop}, we have
\begin{align*}
\abs{\bm{J}_2} &\le \bigg| \int_{\T} \frac{\epsilon(s-s') (\be_{\rm s}\be_\rho^{\rm T}+ \be_\rho\be_{\rm s}^{\rm T})}{\abs{\bm{R}}^3}\bm{f}(s') ds' \bigg| + C\norm{\bm{f}}_{C(\T)}\int_{\T}\frac{ \epsilon (s-s')^2}{\abs{\bm{R}}^3} ds' \\
&\le C\epsilon \abs{\log\epsilon}\norm{\bm{f}}_{C^1(\T)}.
\end{align*}

Thus $\abs{r_\epsilon[\bm{f}]}$ satisfies
\begin{align*}
\abs{r_\epsilon[\bm{f}]} &\le \abs{\bm{K}_\text{reg}-\bm{K}_R}+\abs{\bm{K}_R-\bm{K}}\le \epsilon\abs{\log\epsilon}C\norm{\bm{f}}_{C^1(\T)},
\end{align*}
and therefore
\begin{equation}\label{L2_bd_reps}
\big\|r_\epsilon[\bm{f}]\big\|_{L^2(\T)} = \bigg(\int_\T\abs{r_\epsilon[\bm{f}]}^2 ds \bigg)^{1/2}  \le \epsilon\abs{\log\epsilon}C\norm{\bm{f}}_{C^1(\T)}.
\end{equation}
\end{proof}

A solution theory for the regularized rigid slender body approximation using either the above choice of $r_\epsilon$ or any other regularization is still needed. Given that the slender body PDE framework of \cite{closed_loop,free_ends} {\it is} well-posed, we should be able to use this framework to come up with the `best' regularization for the slender body approximation. However, this is truly a deeper issue that we plan to explore in future work. 

%%%%%%%%%%%%%%%%%%%%%%%%%%%%%%%%%%%%%%%%%%%%%%%%%%%%%%%%%%%%%%
%%%%%%%%%%%%%%%%%%%%%%%%%%%%%%%%%%%%%%%%%%%%%%%%%%%%%%%%%%%%%%
%%%%%%%%%%%%%%%%%%%%%%%%%%%%%%%%%%%%%%%%%%%%%%%%%%%%%%%%%%%%%%

\begin{appendix}
\section{Appendix}\label{appendix}
Here we provide proofs for the $L^2(\Gamma_\epsilon)$ trace inequality (Lemma \ref{trace2}) and the higher regularity estimate (Lemma \ref{high_reg}). \\

We first recall the following lemma, which will be used throughout the appendix. 
 \begin{lemma}\emph{(Sobolev inequality)}\label{sobolev}
Let $\Omega_{\epsilon}=\R^3\backslash\overline{\Sigma_{\epsilon}}$ be as in Section \ref{geometry}. For any $\bu\in D^{1,2}(\Omega_{\epsilon})$, we have
\begin{equation}\label{sobolev_const}
\| \bu\|_{L^6(\Omega_{\epsilon})} \le C\|\nabla\bu\|_{L^2(\Omega_{\epsilon})}
\end{equation}
where $C$ depends only on $c_\Gamma$ and $\kappa_{\max}$.
\end{lemma}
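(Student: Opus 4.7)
The plan is to prove the Sobolev inequality by constructing an $\epsilon$-independent extension operator $E : D^{1,2}(\Omega_\epsilon) \to \dot H^1(\R^3)$ (where $\dot H^1(\R^3)$ denotes functions in $L^6(\R^3)$ with gradient in $L^2(\R^3)$), applying the classical $\R^3$ Sobolev embedding $\|v\|_{L^6(\R^3)} \le C_0 \|\nabla v\|_{L^2(\R^3)}$ with absolute constant $C_0$, and then restricting back to $\Omega_\epsilon$. Since $\|\bu\|_{L^6(\Omega_\epsilon)} \le \|E\bu\|_{L^6(\R^3)}$, the lemma follows as soon as we show $\|\nabla (E\bu)\|_{L^2(\R^3)} \le C \|\nabla \bu\|_{L^2(\Omega_\epsilon)}$ with $C$ depending only on $c_\Gamma$ and $\kappa_{\max}$.

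For the extension, I would use the tubular coordinate system $(s,\rho,\theta)$ of Section \ref{geometry}, which is a valid diffeomorphism in the neighborhood $\{\rho < r_{\max}\}$ with $r_{\max}$ depending only on $c_\Gamma$ and $\kappa_{\max}$. Since $\epsilon < r_{\max}/4$, the reflection map
\[
\bx = \X(s) + \rho\be_\rho(s,\theta) \;\longmapsto\; \bx^* = \X(s) + (2\epsilon - \rho)\be_\rho(s,\theta)
\]
sends $\Sigma_\epsilon = \{\rho < \epsilon\}$ onto the shell $\{\epsilon < \rho < 2\epsilon\} \subset \Omega_\epsilon$, fixing $\Gamma_\epsilon$ pointwise. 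Setting $E\bu(\bx) := \bu(\bx)$ on $\Omega_\epsilon$ and $E\bu(\bx) := \bu(\bx^*)$ on $\Sigma_\epsilon$ produces a function that is continuous across $\Gamma_\epsilon$, hence has a well-defined distributional gradient on $\R^3$. Decay at infinity is inherited from $\bu \in L^6(\Omega_\epsilon)$, so $E\bu \in \dot H^1(\R^3)$.

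To estimate the extension, the key point is that the Jacobian factor $\rho(1 - \rho\wh\kappa(s,\theta))$ of the tubular change of variables is bounded above and below, uniformly in $\epsilon$, by constants depending only on $\kappa_{\max}$ on the shell $\{\rho < 2\epsilon\} \subset \{\rho < r_{\max}/2\}$. In these coordinates the reflection acts simply as $\rho \mapsto 2\epsilon - \rho$ with unit derivative, and the frame derivatives contribute bounded multiplicative terms controlled by $\kappa_{\max}$. Changing variables yields
\[
\int_{\Sigma_\epsilon} |\nabla (E\bu)|^2 \, d\bx \;\le\; C \int_{\Sigma_{2\epsilon} \setminus \Sigma_\epsilon} |\nabla \bu|^2 \, d\bx \;\le\; C \int_{\Omega_\epsilon} |\nabla \bu|^2 \, d\bx,
\]
with $C = C(c_\Gamma, \kappa_{\max})$. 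Combining with the trivial bound on $\Omega_\epsilon$ and applying the $\R^3$ Sobolev inequality to $E\bu$ then concludes the proof.

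The main obstacle is verifying the $\epsilon$-uniformity of the extension constant; this is precisely why the reflection has to take place within a tubular neighborhood of fixed (i.e., $\epsilon$-independent) thickness $r_{\max}$, where the coordinate system is non-degenerate. A naive extension by zero would fail (it is not in $H^1$ near $\Gamma_\epsilon$) and a harmonic extension into $\Sigma_\epsilon$ would require an $\epsilon$-uniform trace and extension estimate on a shrinking domain, which is harder to quantify. The reflection approach sidesteps both difficulties by exploiting the fact that $\Sigma_\epsilon$ and the reflected shell $\Sigma_{2\epsilon} \setminus \Sigma_\epsilon$ are comparable subsets of a coordinate patch whose geometry is controlled by $c_\Gamma$ and $\kappa_{\max}$ alone.
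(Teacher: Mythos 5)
The paper does not give a proof of Lemma \ref{sobolev}; it defers to Appendix A.2.4 of \cite{closed_loop}. Nevertheless, your proposed extension argument has a genuine gap, independent of what that reference does.

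The problem is that the reflection map $\bx=\X(s)+\rho\be_\rho(s,\theta)\mapsto\bx^*=\X(s)+(2\epsilon-\rho)\be_\rho(s,\theta)$ degenerates at the fiber centerline $\rho=0$: a single point $\X(s)$ is sent to the entire circle $\{\rho=2\epsilon,\ 0\le\theta<2\pi\}$, with the image depending on a coordinate $\theta$ that has no meaning at $\rho=0$. Thus $E\bu$ is generically not single-valued, hence not even continuous, along the centerline, and $\nabla(E\bu)$ develops a non-$L^2$ singularity there. Concretely, the angular component of the Euclidean gradient in tubular coordinates is $\rho^{-1}\p_\theta$, so
\[
\bigl|\nabla(E\bu)(\rho,\theta,s)\bigr|^2 \;\ge\; \frac{1}{\rho^2}\,\bigl|\p_\theta\bu(2\epsilon-\rho,\theta,s)\bigr|^2,
\]
and with the volume element $\rho(1-\rho\wh\kappa)\,d\rho\,d\theta\,ds$ one gets
\[
\int_{\Sigma_\epsilon}|\nabla(E\bu)|^2\,d\bx \;\gtrsim\; \int_{\T}\int_0^{2\pi}\int_0^\epsilon \frac{|\p_\theta\bu(2\epsilon-\rho,\theta,s)|^2}{\rho}\,d\rho\,d\theta\,ds,
\]
which diverges logarithmically whenever $\p_\theta\bu$ does not vanish near $\rho=2\epsilon$ (take e.g.\ $\bu$ equal to $\cos\theta$ times a smooth bump in $\rho$ on the shell). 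So $E\bu\notin D^{1,2}(\R^3)$ and the classical Sobolev inequality gives nothing. Relatedly, the claim that the Jacobian factor $\rho(1-\rho\wh\kappa)$ is "bounded above and below, uniformly in $\epsilon$" on $\{\rho<2\epsilon\}$ is false: it vanishes at $\rho=0$, so the ratio of volume elements under the reflection, $\rho/(2\epsilon-\rho)$, is not bounded below, and the change-of-variables comparison $\int_{\Sigma_\epsilon}|\nabla(E\bu)|^2\le C\int_{\text{shell}}|\nabla\bu|^2$ fails.

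The underlying obstruction is topological: $\Sigma_\epsilon$ has disk cross-sections while the shell has annular cross-sections, so there is no diffeomorphism between them fixing $\Gamma_\epsilon$. Any extension into $\Sigma_\epsilon$ must kill the $\theta$-dependence at the centerline. A constant-in-$\rho$ extension $E\bu(\rho,\theta,s)=\bu(\epsilon,\theta,s)$ fails for the same $\rho^{-1}\p_\theta$ reason; what does work is to interpolate in $\rho$ between the trace on $\Gamma_\epsilon$ and its angular average $\bar\gamma(s)=\frac{1}{2\pi}\int_0^{2\pi}\bu(\epsilon,\theta,s)\,d\theta$, so that $\p_\theta(E\bu)=O(\rho)$ near the centerline and the angular gradient stays bounded. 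Alternatively, one can split $\bu$ with a cutoff supported in $\{\rho<r_{\max}\}$, treat the far part by extension by zero, and handle the near part directly in tubular coordinates. In either case the $\epsilon$-uniformity requires the trace bounds of Section \ref{ineq} (or their precursors in \cite{closed_loop}); it does not follow from the reflection alone.
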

The proof of $\epsilon$-independence of $C$ appears in Appendix A.2.4 of \cite{closed_loop}. 

%%%%%%%%%%%%%%%%%%%%%
\subsection{Proof of Lemma \ref{trace2}}
The proof of the $L^2(\Gamma_\epsilon)$ trace inequality follows the same outline as the proof of Lemma \ref{trace1}, contained in Appendix A.2.1 of \cite{closed_loop}. In particular, using the $\epsilon$-independent $C^2$-diffeomorphisms $\psi_j$ (defined in Appendix A.2.1, \cite{closed_loop}) which map segments of the curved slender body $\Sigma_\epsilon$ to a straight cylinder, it suffices to show the $\sqrt{\epsilon}\abs{\log\epsilon}$ dependence of the trace constant for a straight cylinder. \\

Accordingly, let $\D_\rho\subset\R^2$ denote the open disk of radius $\rho$ in $\R^2$, centered at the origin, and, for some $a<\infty$, define the cylindrical surface $\Gamma_{\epsilon,a}=\p \D_\epsilon\times [-a,a]$ and the cylindrical shell $\mc{C}_{\epsilon,a}= (\D_1\backslash\overline{\D_\epsilon}) \times [-a,a]$. Consider the function space 
\[ D^{1,2}_\Gamma(\mc{C}_{\epsilon,a})= \big\{ \bu\in D^{1,2}(\mc{C}_{\epsilon,a}) \; : \; \bu\big|_{\p \mc{C}_{\epsilon,a}\backslash\Gamma_{\epsilon,a}} = 0 \big\}. \]
As in the proof of Lemma \ref{trace1}, it suffices to show the $\sqrt{\epsilon}\abs{\log\epsilon}$ dependence of the $L^2(\Gamma_{\epsilon,a})$ trace constant for functions belonging to $D^{1,2}_\Gamma(\mc{C}_{\epsilon,a})$. \\

By estimate (A.4) in \cite{closed_loop}, any $\bu\in C^1(\mc{C}_{\epsilon,a}) \cap C^0(\overline{\mc{C}_{\epsilon,a}})\cap D^{1,2}_\Gamma(\mc{C}_{\epsilon,a})$ satisfies 
\begin{align*}
\abs{{\rm Tr}(\bu)}^2 \le \abs{\log\epsilon} \int_\epsilon^1 \abs{\frac{\p\bu}{\p\rho}}^2 \rho \, d\rho.
\end{align*}
Then, noting that the surface element on $\Gamma_{\epsilon,a}$ is simply $\epsilon$, we have
\begin{align*}
\norm{{\rm Tr}(\bu)}_{L^2(\Gamma_{\epsilon,a})}^2 &= \int_{-a}^a \int_0^{2\pi} \abs{{\rm Tr}(\bu)}^2 \epsilon \, d\theta\, ds \\
&\le \epsilon\abs{\log\epsilon}  \int_{-a}^a \int_0^{2\pi}\int_\epsilon^1 \abs{\frac{\p\bu}{\p\rho}}^2 \rho \, d\rho\, d\theta\, ds  
\le \epsilon\abs{\log\epsilon}\norm{\nabla\bu}_{L^2(\mc{C}_{\epsilon,a})}^2.
\end{align*}
The same result for $\bu\in D^{1,2}_\Gamma(\mc{C}_{\epsilon,a})$ follows by density.

%%%%%%%%%%%%%%%%%%%%%%%%%%%%%%%%%%%%%%%%%%%%%%%%%%%%%%%%%%%%%%
%%%%%%%%%%%%%%%%%%%%%%%%%%%%%%%%%%%%%%%%%%%%%%%%%%%%%%%%%%%%%%
%%%%%%%%%%%%%%%%%%%%%%%%%%%%%%%%%%%%%%%%%%%%%%%%%%%%%%%%%%%%%%
%%%%%%%%%%%%%%%%%%%%%%%%%%%%%%%%%%%%%%%%%%%%%%%%%%%%%%%%%%%%%%
\subsection{Proof of Lemma \ref{high_reg}}\label{reg_lem}
To determine the $\epsilon$-dependence of the constant in \eqref{high_reg_eqn}, it suffices to work locally near the slender body surface and show that Lemma \ref{high_reg} holds within an $\epsilon$-independent region about the slender body centerline. We define the region 
\begin{equation}\label{mcO}
\mc{O} = \big\{\bx \in \Omega_{\epsilon} \; : \; \bx= \X(s) + \rho \be_{\rho}(s,\theta), \quad \epsilon < \rho<r_{\max} \big\},
\end{equation}
where $r_{\max}$ is as in Section \ref{geometry}. Within $\mc{O}$, we can use the orthonormal frame \eqref{C2_frame}. We will use the notation $\p_s,\p_\theta,\p_\rho$ to denote derivatives $\p/\p s$, $\p/\p\theta$, $\p/\p\rho$ with respect to the variables $s,\theta,\rho$, defined with respect to the orthonormal frame. We verify the $\epsilon$-dependence in the bound for $\nabla^2\bu^{\rm p}$ and $\nabla p^{\rm p}$ in two parts: we first show an $L^2$ bound for derivatives $\nabla(\p_s\bu^{\rm p})$, $\nabla(\p_\theta\bu^{\rm p})$, $\p_s p^{\rm p}$, and $\p_\theta p^{\rm p}$ in directions tangent to the slender body surface $\Gamma_\epsilon$, and then use these bounds to estimate the derivatives $\nabla(\p_\rho\bu^{\rm p})$, $\p_\rho p^{\rm p}$ normal to $\Gamma_\epsilon$. \\ 

We begin by estimating the tangential derivatives $\nabla(\p_s\bu^{\rm p})$ and $\nabla(\p_\theta\bu^{\rm p})$. Since the derivatives $\p_s$ and $\p_\theta$ with respect to the orthonormal frame \eqref{C2_frame} do not commute with the ``straight'' differential operators $\nabla$ and $\div$, we will need to make use of the following commutator bounds. 
\begin{proposition}[Commutator estimates]\label{comm_ests}
For any function $\bu\in D^{1,2}_0(\mc{O})$ and for each of the differential operators $D=\div,\, \nabla,\, \E(\cdot)$, the following commutator estimates hold:
\begin{align*}
\norm{[D,\p_\theta]\bu}_{L^2(\mc{O})} &\le C\norm{\nabla\bu}_{L^2(\mc{O})}, \quad \norm{[D,\p_s]\bu}_{L^2(\mc{O})} \le C\norm{\nabla\bu}_{L^2(\mc{O})}, 
\end{align*}
where the constant $C$ depends only on $c_\Gamma$, $\kappa_{\max}$, and $\xi_{\max}$. 
\end{proposition}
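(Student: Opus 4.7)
The plan is to compute each commutator explicitly in the local $(s,\theta,\rho)$ coordinates and verify that every term which survives the cancellation of second-order derivatives is a bounded multiple of a first-order derivative of $\bu$. By density it suffices to consider $\bu \in C_c^\infty(\mc{O})$, and because $\E = (\nabla+\nabla^{\rm T})/2$ and $\div$ is a trace of $\nabla$, the cases $D=\E$ and $D=\div$ follow by linearity and by taking symmetric/trace parts once the case $D=\nabla$ is established.

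For $D=\nabla$, the key input is the expression of the Cartesian gradient in local coordinates, already implicit in \eqref{grad_rho}. From the coordinate tangent vectors $\p_\rho \bx = \be_\rho$, $\p_\theta \bx = \rho \be_\theta$, and $\p_s \bx = (1-\rho\wh\kappa)\be_t + \rho\kappa_3 \be_\theta$ (with $\wh\kappa = \kappa_1\cos\theta + \kappa_2 \sin\theta$, and $1-\rho\wh\kappa \ge 1/2$ throughout $\mc{O}$ since $\rho\le r_{\max}\le 1/(2\kappa_{\max})$), one reads off
\[
\nabla f = (\p_\rho f)\be_\rho + \tfrac{1}{\rho}(\p_\theta f)\be_\theta + \tfrac{1}{1-\rho\wh\kappa}(\p_s f - \kappa_3 \p_\theta f)\be_t,
\]
and the analogous formula for $\nabla \bu$ acting on vector fields, expanded via the frame decomposition of $\bu$. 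Because $\p_s,\p_\theta,\p_\rho$ pairwise commute as coordinate derivatives, in $[\nabla,\p_\theta]\bu$ and $[\nabla,\p_s]\bu$ every second-order derivative of $\bu$ cancels, and only terms in which $\p_\theta$ or $\p_s$ has fallen on a scalar coefficient $1/\rho$, $1/(1-\rho\wh\kappa)$, $\kappa_3$, or on a frame vector $\be_\rho,\be_\theta,\be_t$ survive. Each such quantity is bounded by a constant depending only on $\kappa_{\max}$ and $\xi_{\max}$: we use $\p_\theta(1/\rho)=\p_s(1/\rho)=0$, the identities $\p_\theta \be_\rho = \be_\theta$, $\p_\theta \be_\theta = -\be_\rho$, $\p_\theta \be_t = 0$, the frame ODEs \eqref{C2_frame} for $\p_s \be_t,\p_s \be_{n_j}$, and the bounds $|\p_\theta \wh\kappa|\le \kappa_{\max}$, $|\p_s \wh\kappa|\le \xi_{\max}+2(\kappa_{\max}+\pi)$ stated after \eqref{C2_frame}. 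The surviving first-order operators on $\bu$ always appear as one of $\p_\rho \bu$, $\tfrac{1}{\rho}\p_\theta \bu$, or $\tfrac{1}{1-\rho\wh\kappa}(\p_s\bu - \kappa_3\p_\theta\bu)$, each of which is an orthonormal-frame component of $\nabla \bu$ and hence pointwise bounded by $|\nabla \bu|$.

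The one subtle point, and the main obstacle worth flagging, is that the factor $1/\rho$ in the expression for $\nabla$ can be as large as $1/\epsilon$, so a priori it threatens to spoil the bound. This concern is resolved by the observation that both $\p_s$ and $\p_\theta$ annihilate $1/\rho$, so it never reappears unpaired in the commutators; whenever $1/\rho$ does occur it is multiplied by $\p_\theta \bu$ and is therefore absorbed into $|\nabla \bu|$. Granted this, the pointwise estimate
\[
|[\nabla,\p_\theta]\bu| + |[\nabla,\p_s]\bu| \le C\,|\nabla \bu|, \qquad C = C(c_\Gamma,\kappa_{\max},\xi_{\max}),
\]
is immediate, and integration over $\mc{O}$ yields the claimed $L^2$ commutator bounds. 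The $D=\div$ and $D=\E$ statements follow at once by tracing/symmetrizing.
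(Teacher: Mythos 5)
Your proposal is correct, and it takes a cleaner route than the paper for the crucial step. You work with $\bu$ as a Cartesian vector field and express $\nabla$ in local coordinates as a sum over the orthonormal frame, so every term that survives in $[\nabla,\p_\theta]\bu$ and $[\nabla,\p_s]\bu$ is a bounded scalar or frame-vector factor multiplying one of the frame components $\p_\rho\bu$, $\tfrac{1}{\rho}\p_\theta\bu$, $\tfrac{1}{1-\rho\wh\kappa}(\p_s\bu-\kappa_3\p_\theta\bu)$ of $\nabla\bu$, and the commutator is therefore bounded \emph{pointwise} by $C|\nabla\bu|$; the $D=\div$ and $D=\E$ cases then really do follow by tracing and symmetrizing since those operations commute with $\p_s$, $\p_\theta$. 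The paper's proof instead decomposes $\bu$ into the scalar frame components $u_\rho,u_\theta,u_s$ and computes the commutators explicitly in those variables; because $\p_\theta$ and $\p_s$ hit the frame vectors $\be_\rho,\be_\theta$ hidden inside $u_\rho,u_\theta$, the resulting expressions (see the last lines of the paper's explicit formulas for $[\div,\p_\bullet]\bu$) contain undifferentiated terms such as $\tfrac{\p_\theta^2\wh\kappa}{1-\rho\wh\kappa}u_\theta$, which are not controlled pointwise by $|\nabla\bu|$. The paper therefore invokes the Sobolev inequality (Lemma \ref{sobolev}) via $\norm{u_\theta}_{L^2(\mc{O})}\le\abs{\mc{O}}^{1/3}\norm{\bu}_{L^6(\mc{O})}\le C\norm{\nabla\bu}_{L^2(\mc{O})}$ to close the $L^2$ estimate. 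Your grouping avoids this step entirely and gives a slightly stronger (pointwise) conclusion, at the cost of a small ambiguity in the phrase ``expanded via the frame decomposition of $\bu$'': for the pointwise bound to hold you must keep the quantities $\tfrac{1}{\rho}\p_\theta\bu$ etc.\ intact as Cartesian-vector derivatives rather than splitting them further into frame components, since individually the pieces $\tfrac{1}{\rho}u_\theta$, $\tfrac{1}{\rho}u_\rho$ are not bounded by $|\nabla\bu|$. Given that your surviving-term list is exactly the intact frame components, I read you as intending the Cartesian formulation, so the argument goes through.
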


\begin{proof} 
We begin by denoting
\begin{align*}
 \be_\theta(s,\theta) &= -\sin\theta\be_{n_1}(s) + \cos\theta \be_{n_2}(s),\\
 u_\rho &= \bu\cdot\be_\rho, \; u_\theta=\bu\cdot\be_\theta, \; u_s = \bu\cdot\be_t.
 \end{align*}
 
Then, with respect to the orthonormal frame \eqref{C2_frame}, the divergence and gradient are given by 
\begin{align*}
\div \bu &= \frac{1}{1-\rho\wh\kappa} \bigg( \frac{1}{\rho} \frac{\p(\rho(1-\rho\wh\kappa)u_\rho)}{\p\rho} + \frac{1}{\rho}\frac{\p ((1-\rho\wh\kappa)u_\theta)}{\p\theta} + \frac{\p u_s}{\p s} \bigg) \\
\nabla \bu &= \be_\rho(s,\theta)\frac{\p\bu}{\p\rho}^{\rm T} + \be_\theta(s,\theta)\frac{1}{\rho}\frac{\p\bu}{\p\theta}^{\rm T} + \be_t(s)\frac{1}{1-\rho\wh\kappa} \bigg(\frac{\p \bu}{\p s} -\kappa_3 \frac{\p \bu}{\p\theta} \bigg)^{\rm T},
\end{align*}
where 
\begin{equation}\label{kappahat}
\wh\kappa(s,\theta) = \kappa_1(s)\cos\theta + \kappa_2(s)\sin\theta.
\end{equation}

Direct computation of the commutators yields 
\begin{align*}
[\div,\p_\theta]\bu &= \frac{(\p_\theta\wh\kappa)}{1-\rho\wh\kappa}\bigg( \rho\,\div\bu - \frac{1}{\rho}\frac{\p}{\p\rho} \big(\rho^2 u_\rho \big)  - \frac{\p u_\theta}{\p\theta} \bigg) - \frac{(\p_\theta^2\wh\kappa)}{1-\rho\wh\kappa}u_\theta  \\
[\div,\p_s]\bu &= \frac{(\p_s\wh\kappa)}{1-\rho\wh\kappa}\bigg( \rho\,\div\bu - \frac{1}{\rho}\frac{\p}{\p\rho} \big(\rho^2 u_\rho \big)  - \frac{\p u_\theta}{\p\theta} \bigg) - \frac{(\p_\theta\p_s\wh\kappa)}{1-\rho\wh\kappa}u_\theta  \\
[\nabla,\p_\theta]\bu &= \be_\theta\frac{\p\bu}{\p\rho}^{\rm T} - \be_\rho\frac{1}{\rho}\frac{\p\bu}{\p\theta}^{\rm T} + \be_t\frac{\rho(\p_\theta\wh\kappa)}{(1-\rho\wh\kappa)^2} \bigg(\frac{\p \bu}{\p s} -\kappa_3 \frac{\p \bu}{\p\theta} \bigg)^{\rm T} \\
[\nabla,\p_s]\bu &= (\p_s\be_\rho)\frac{\p\bu}{\p\rho}^{\rm T} + (\p_s\be_\theta)\frac{1}{\rho}\frac{\p\bu}{\p\theta}^{\rm T} + \bigg(\be_t \frac{\rho(\p_s\wh\kappa)}{1-\rho\wh\kappa}+(\p_s\be_t) \bigg)\frac{1}{1-\rho\wh\kappa} \bigg(\frac{\p \bu}{\p s} -\kappa_3 \frac{\p \bu}{\p\theta} \bigg)^{\rm T} 
\end{align*}

Using \eqref{kappahat} and the orthonormal frame ODEs \eqref{C2_frame}, we have
\begin{align*}
\abs{\p_\theta \wh\kappa} &= \abs{-\kappa_1\sin\theta+  \kappa_2\cos\theta}\le \kappa_{\max}, \quad \abs{\p_s \wh\kappa} = \abs{\kappa_1'\cos\theta+\kappa_2'\sin\theta}\le \xi_{\max} + 2(\kappa_{\max}+\pi), \\
\abs{\p_\theta^2\wh\kappa} &= \abs{-\wh\kappa}\le \kappa_{\max}, \quad \abs{\p_\theta\p_s \wh\kappa} = \abs{-\kappa_1'\sin\theta+  \kappa_2'\cos\theta}\le \xi_{\max}+ 2(\kappa_{\max}+\pi),  \\
\abs{\p_s \be_\rho} &= \abs{-\wh\kappa\be_t + \kappa_3\be_\theta}\le \kappa_{\max} +\pi, \quad \abs{\p_s\be_\theta} = \abs{-(\p_\theta \wh\kappa)\be_t -\kappa_3\be_t}\le \kappa_{\max} +\pi, \\
\abs{\frac{1}{1-\rho\wh\kappa} } &\le \frac{1}{1- r_{\max}\kappa_{\max}(\cos\theta+\sin\theta) } \le \frac{1}{1- \frac{1}{2\kappa_{\max}}\kappa_{\max}\sqrt{2} } \le 4 .
\end{align*}

Finally, noting that, by Lemma \ref{sobolev},
\[ \norm{u_\theta}_{L^2(\mc{O})} \le \abs{\mc{O}}^{1/3} \norm{\bu}_{L^6(\mc{O})} \le C\norm{\nabla\bu}_{L^2(\mc{O})}, \]
the desired $L^2(\Omega)$ bounds follow for each of $D=\div,\nabla$. The estimate for the symmetric gradient $\E(\bu)$ then follows from the gradient commutator bound. 
\end{proof}

%%%%%%%%%%%%%%%%%%%%%%%%%%%%%%%%%%%%%%%%%
 
Now, to derive an estimate for $\nabla(\p_s\bu^{\rm p})$, we will make use of Definition \ref{rigid_weak_p} with a particular test function $\bm{\varphi}$, which we will construct here. First, we want our test function to be supported only within $\mc{O}$. We define a smooth cutoff function 
\begin{equation}\label{Ocutoff}
\psi(\rho) = \begin{cases}
1, & \rho < r_{\max}/4 \\
0, & \rho > r_{\max}/2, 
\end{cases} \quad \abs{\frac{\p\psi}{\p\rho} } \le C,
\end{equation}
where $C$ depends only on $r_{\max}$. Note that $\psi(\rho)$ commutes with both $\p_\theta$ and $\p_s$. \\

We would like to use $\p_s^2(\psi\bu^{\rm p})$ as a test function in Definition \ref{rigid_weak_p}, but it will be more convenient to work with a function which vanishes on $\Gamma_\epsilon$. We therefore construct a correction $\bg \in C^2(\Omega_\epsilon)$ supported only in $\mc{O}$ and satisfying
\begin{equation}\label{g_correction}
\bg\big|_{\Gamma_\epsilon} = (\p_s \bu^{\rm p})\big|_{\Gamma_\epsilon} = \bm{\omega}^{\rm p}\times\be_t(s), \quad \norm{\nabla \bg}_{L^2(\mc{O})} \le C\abs{\bm{\omega}},
\end{equation}
where $C$ depends on $c_\Gamma$ and $\kappa_{\max}$. To build $\bg$, we follow a similar construction used in Section 4.1 of \cite{closed_loop}. We define
\[ \bg_0(\rho,\theta,s) = \begin{cases}
\bm{\omega}^{\rm p}\times \be_t(s) & \text{if } \rho<4\epsilon \\
0 & \text{otherwise}
\end{cases} \] 
and take
\[ \bg(\rho,\theta,s):= \phi_\epsilon(\rho)\bg_0(\rho,\theta,s), \]
where $\phi_\epsilon(\rho)$ is the smooth cutoff defined in \eqref{cutoff_def}-\eqref{phi_decay}. Note that $\bg\in C^2$ and is supported within the region
\[ \mc{O}_\epsilon := \big\{ \X(s) + \rho \be_\rho(s,\theta) \; : \; s\in \T, \, \epsilon \le \rho \le 4\epsilon, \, 0\le \theta <2\pi \big\}, \]
where $\abs{\mc{O}_\epsilon} \le C\epsilon^2$. Then, using \eqref{phi_decay} and \eqref{C2_frame}, we have
\begin{align*} 
\norm{\nabla \bg}_{L^2(\mc{O})} &\le \sqrt{\abs{\mc{O}_\epsilon}}\norm{\nabla \bg}_{C(\mc{O}_\epsilon)} \\
& \le \sqrt{\abs{\mc{O}_\epsilon}}\bigg( \norm{\frac{\p\phi_\epsilon}{\p\rho}}_{C(\mc{O}_\epsilon)}\norm{\bg_0}_{C(\mc{O}_\epsilon)}+ \norm{\frac{1}{1-\rho\wh\kappa}\frac{\p\bg_0}{\p s}}_{C(\mc{O}_\epsilon)} \bigg) \le C\abs{\bm{\omega}^{\rm p}}.
\end{align*}

Now, we could just use $\p_s(\p_s(\psi\bu^{\rm p}) -\bg)$ as a test function in Definition \ref{rigid_weak_p}, but it will actually be useful to include a second correction term in the following way. We consider $\bz\in D^{1,2}_0(\mc{O})$ satisfying
\begin{equation}\label{zee_def}
\begin{aligned}
\div\bz &= \div(\psi\p_s\bu^{\rm p} -\bg) \quad \text{in } \mc{O} \\
\norm{\nabla \bz}_{L^2(\mc{O})} &\le C\norm{\div(\psi\p_s\bu^{\rm p} -\bg)}_{L^2(\mc{O})} 
\end{aligned}
\end{equation}
for $C$ depending only on $c_\Gamma$ and $\kappa_{\max}$. We know that such a $\bz$ exists due to \cite{galdi2011introduction}, Section III.3, and the constant $C$ is independence of $\epsilon$ due to Appendix A.2.5 of \cite{closed_loop}. Furthermore, since $\div\bu^{\rm p}=0$, by Proposition \ref{comm_ests} we have  
\begin{align*}
\norm{\div(\psi\p_s\bu^{\rm p}-\bg)}_{L^2(\mc{O})} &\le \norm{\div(\p_\theta\bu^{\rm p})}_{L^2(\mc{O})}+ C\norm{\p_\theta\bu^{\rm p}}_{L^2(\mc{O})}+\norm{\nabla \bg}_{L^2(\mc{O})} \\
 &\le \norm{[\div,\p_\theta]\bu^{\rm p}}_{L^2(\mc{O})} + C\norm{\p_\theta\bu^{\rm p}}_{L^2(\mc{O})} + C\abs{\bm{\omega}^{\rm p}} \\
 &\le C\norm{\nabla\bu^{\rm p}}_{L^2(\mc{O})} + C\abs{\bm{\omega}^{\rm p}}.
\end{align*}
Here we have also used that $\norm{\p_\theta\bu^{\rm p}}_{L^2(\mc{O})} \le \norm{\rho \nabla\bu^{\rm p}}_{L^2(\mc{O})} \le r_{\max}\norm{\nabla\bu^{\rm p}}_{L^2(\mc{O})}$. In particular, $\bz$ satisfying \eqref{zee_def} also satisfies
\begin{equation}\label{zestimate}
\norm{\nabla\bz}_{L^2(\mc{O})} \le C\norm{\nabla\bu^{\rm p}}_{L^2(\mc{O})}  + C\abs{\bm{\omega}^{\rm p}}.
\end{equation}

Using extension by zero to consider $\bz$ as a function over all $\Omega_\epsilon$, we can now construct our desired test function for use in Definition \ref{rigid_weak_p}. In particular, we will use the function $\p_s(\p_s(\psi\bu^{\rm p})-\bg-\bz)$ in place of $\bm{\varphi}$ in Definition \ref{rigid_weak_p}. Note that by definition of $\bz$, this function may only belong to $L^2(\Omega_\epsilon)$. In this case, we can make sense of the following integration-by-parts argument using finite differences rather than full derivatives (see \cite{boyer2012mathematical}, Section III.2.7 for construction of finite difference operators along a curved boundary). Thus we really only need $\p_s(\psi\bu^{\rm p})-\bg-\bz\in D^{1,2}(\Omega_\epsilon)$ to make sense of the following result. Note that in integrating by parts, we will also need to make use of the fact that, for $i=s,\theta$,
\begin{equation}\label{jacfac_i}
 \p_i (d\bx) = -\frac{\rho\p_i\wh\kappa}{1-\rho\wh\kappa} d\bx := \mc{J}_i \, d\bx, \quad \abs{\mc{J}_i} \le C; \quad i=s,\theta ,
 \end{equation}
where $C$ depends on $c_\Gamma$, $\kappa_{\max}$, and $\xi_{\max}$. \\

Then, using $\p_s(\p_s(\psi\bu^{\rm p})-\bg-\bz)$ in Definition \ref{rigid_weak_p}, we have 
\begin{align*}
0 &= \int_{\mc{O}} \bigg(2\E(\bu^{\rm p}): \E\big(\p_s(\p_s(\psi\bu^{\rm p})-\bg -\bz) \big) - p^{\rm p} \,\div(\p_s(\p_s(\psi\bu^{\rm p})-\bg -\bz)) \bigg) \, d\bx \\
&= \int_{\mc{O}} 2\E(\bu^{\rm p}): \p_s\E(\p_s(\psi\bu^{\rm p})-\bg-\bz) \, d\bx + \int_{\mc{O}} 2\E(\bu^{\rm p}): [\E(\cdot),\p_s](\p_s(\psi\bu^{\rm p})-\bg -\bz) \, d\bx \\
&\qquad - \int_{\mc{O}} p^{\rm p} \,\p_s(\div(\p_s(\psi\bu^{\rm p})-\bg -\bz)) \, d\bx - \int_{\mc{O}} p^{\rm p} \,[\div,\p_s](\p_s(\psi\bu^{\rm p})- \bg -\bz) \, d\bx \\
&= -\int_{\mc{O}} 2\p_s\E(\bu^{\rm p}): \E(\p_s(\psi\bu^{\rm p})-\bg -\bz) \, d\bx -\int_{\mc{O}} 2\E(\bu^{\rm p}): \E(\p_s(\psi\bu^{\rm p})-\bg -\bz) \, \mc{J}_s \, d\bx \\
&\qquad + \int_{\mc{O}} 2\E(\bu^{\rm p}): [\E(\cdot),\p_s](\p_s(\psi\bu^{\rm p})-\bg -\bz) \, d\bx - \int_{\mc{O}} p^{\rm p} \,[\div,\p_s](\p_s(\psi\bu^{\rm p})- \bg -\bz) \, d\bx \\
&= -\int_{\mc{O}} 2\E(\p_s\bu^{\rm p}):\E(\p_s(\psi\bu^{\rm p})-\bg -\bz) \, d\bx - \int_{\mc{O}} 2\E(\bu^{\rm p}): \E(\p_s(\psi\bu^{\rm p})-\bg -\bz) \, \mc{J}_s \, d\bx \\
&\qquad +\int_{\mc{O}} 2[\E(\cdot),\p_s]\bu^{\rm p}: \E(\p_s(\psi\bu^{\rm p})- \bg -\bz) \, d\bx + \int_{\mc{O}} 2\E(\bu^{\rm p}): [\E(\cdot),\p_s](\p_s(\psi\bu^{\rm p})-\bg -\bz) \, d\bx \\
&\qquad  - \int_{\mc{O}} p^{\rm p} \,[\div,\p_s](\p_s(\psi\bu^{\rm p})- \bg -\bz) \, d\bx.
\end{align*}
Note that the first integral in the third line vanishes due to the definition of $\bz$. In this way we we can avoid having to deal with a $\p_s p^{\rm p}$ term in the resulting estimate.  \\

Then, using Proposition \ref{comm_ests}, estimates \eqref{zestimate} and \eqref{g_correction}, and Lemma \ref{korn}, we have 
\begin{align*}
\norm{\E(\p_s\bu^{\rm p})}_{L^2(\mc{O})}^2 &\le  C\norm{\E(\p_s\bu^{\rm p})}_{L^2(\mc{O})} \big( \norm{\p_s \bu^{\rm p}}_{L^2(\mc{O})} + \norm{\E(\bz)}_{L^2(\mc{O})} + \norm{\E(\bg)}_{L^2(\mc{O})} \big) \\
&\quad + C\norm{\E(\bu^{\rm p})}_{L^2(\mc{O})} \big(\norm{\E(\psi\p_s\bu^{\rm p})}_{L^2(\mc{O})} +\norm{\E(\bg)}_{L^2(\mc{O})} +\norm{\E(\bz)}_{L^2(\mc{O})} \big) \\
&\quad + 2\norm{[\E(\cdot),\p_s]\bu^{\rm p}}_{L^2(\mc{O})}\big( \norm{\E(\psi\p_s\bu^{\rm p})}_{L^2(\mc{O})} + \norm{\E(\bz)}_{L^2(\mc{O})} + \norm{\E(\bg)}_{L^2(\mc{O})} \big) \\
&\quad +2\norm{\E(\bu^{\rm p})}_{L^2(\mc{O})} \big( \norm{[\E(\cdot),\p_s](\psi\p_s\bu^{\rm p})}_{L^2(\mc{O})}  + \norm{[\E(\cdot),\p_s](\bz)}_{L^2(\mc{O})} + \norm{[\E(\cdot),\p_s](\bg)}_{L^2(\mc{O})} \big) \\
&\quad + \norm{ p^{\rm p}}_{L^2(\mc{O})} \big( \norm{[\div,\p_s](\psi\p_s\bu^{\rm p})}_{L^2(\mc{O})} + \norm{[\div,\p_s](\bz)}_{L^2(\mc{O})} +\norm{[\div,\p_s](\bg)}_{L^2(\mc{O})} \big)  \\
&\le C(\norm{\E(\p_s\bu^{\rm p})}_{L^2(\mc{O})} + \norm{\nabla\bu^{\rm p}}_{L^2(\mc{O})} +\abs{\bm{\omega}})\big(\norm{\nabla\bu^{\rm p}}_{L^2(\mc{O})}+\norm{p^{\rm p}}_{L^2(\mc{O})} +\abs{\bm{\omega}^{\rm p}} \big) \\
&\le \delta\norm{\E(\p_s\bu^{\rm p})}_{L^2(\mc{O})}^2 + C(\delta) \big(\norm{\nabla\bu^{\rm p}}_{L^2(\mc{O})}^2+\norm{p^{\rm p}}_{L^2(\mc{O})}^2 + \abs{\bm{\omega}^{\rm p}}^2\big)
\end{align*}
for any $0<\delta\in\R$, by Young's inequality. Taking $\delta=\frac{1}{2}$ and using Lemma \ref{korn}, we obtain
\begin{equation}\label{ps_est}
\begin{aligned}
\norm{\nabla(\p_s\bu^{\rm p})}_{L^2(\mc{O})} &\le \norm{\E(\p_s\bu^{\rm p})}_{L^2(\mc{O})} \le C \big(\norm{\nabla\bu^{\rm p}}_{L^2(\mc{O})}+\norm{p^{\rm p}}_{L^2(\mc{O})} + \abs{\bm{\omega}^{\rm p}} \big) \\
& \le C\abs{\log\epsilon}^{1/2} \big(\norm{\nabla\bu^{\rm p}}_{L^2(\Omega_\epsilon)}+\norm{p^{\rm p}}_{L^2(\Omega_\epsilon)} \big),
\end{aligned}
\end{equation}
where we have used Corollary \ref{omegaP} to bound $\abs{\bm{\omega}^{\rm p}}$. Here $C$ depends only on $c_\Gamma$, $\kappa_{\max}$, and $\xi_{\max}$. \\

%%%
We may estimate $\p_\theta\bu^{\rm p}$ in a similar way. In fact, the construction of the analogous test function is simpler since $(\p_\theta\bu^{\rm p})\big|_{\Gamma_\epsilon} = \p_\theta(\bv+\bm{\omega}\times \X(s))=0$ and thus we do not need to correct for a nonzero boundary value. Following the same steps used to estimate $\p_s\bu^{\rm p}$, we obtain
\begin{equation}\label{ptheta_est}
\norm{\nabla(\p_\theta\bu^{\rm p})}_{L^2(\mc{O})} \le C \big(\norm{\nabla\bu^{\rm p}}_{L^2(\Omega_\epsilon)}+\norm{p^{\rm p}}_{L^2(\Omega_\epsilon)} \big),
\end{equation}
where $C$ depends only on $c_\Gamma$, $\kappa_{\max}$, and $\xi_{\max}$. \\

In addition to the estimates \eqref{ps_est} and \eqref{ptheta_est}, we need bounds for the tangential derivatives $\p_s p^{\rm p}$ and $\p_\theta p^{\rm p}$ of the pressure. We begin by estimating $\p_s p^{\rm p}$; the bound for $\p_\theta p^{\rm p}$ is similar. Since we already know that $\p_s p^{\rm p}\in L^2(\Omega_\epsilon)$, we may consider $\wt\bz \in D^{1,2}_0(\mc{O})$ satisfying
\begin{equation}\label{wtzee_def}
\begin{aligned}
\div\wt\bz &= \psi \p_s p^{\rm p} \quad \text{in }\mc{O}, \\
\norm{\nabla\wt\bz}_{L^2(\mc{O})} &\le C\norm{\psi \p_s p^{\rm p}}_{L^2(\mc{O})},
\end{aligned}
\end{equation}
where $\psi$ is as in \eqref{Ocutoff}. Again, we know that such a $\wt\bz$ exists due to \cite{galdi2011introduction}, Section III.3 and \cite{closed_loop}, Appendix A.2.5. \\

Using $\p_s\wt\bz$ as a test function in Definition \ref{rigid_weak_p} (again, we can make sense of the following computation using finite differences, and thus only require $\wt\bz\in D^{1,2}(\mc{O})$), we have
\begin{align*}
0&= \int_{\mc{O}} \bigg(2\E(\bu^{\rm p}): \E(\p_s \wt\bz) - p^{\rm p}\, \div(\p_s\wt\bz)\bigg) \, d\bx = \int_{\mc{O}}2\E(\bu^{\rm p}): \p_s\E(\wt\bz) \, d\bx \\
&\quad +  \int_{\mc{O}}2\E(\bu^{\rm p}): [\E(\cdot),\p_s]\wt\bz \, d\bx - \int_{\mc{O}} p^{\rm p} \,\p_s\div\wt\bz \, d\bx  - \int_{\mc{O}} p^{\rm p} \, [\div,\p_s]\wt\bz \, d\bx \\
&= -\int_{\mc{O}}2\p_s\E(\bu^{\rm p}): \E(\wt\bz) \, d\bx - \int_{\mc{O}}2\E(\bu^{\rm p}): \E(\wt\bz) \, \mc{J}_s \, d\bx +  \int_{\mc{O}}2\E(\bu^{\rm p}): [\E(\cdot),\p_s]\wt\bz \, d\bx \\
&\quad - \int_{\mc{O}} p^{\rm p} \, [\div,\p_s]\wt\bz \, d\bx  + \int_{\mc{O}}(\p_s p)\div\wt\bz \, d\bx + \int_{\mc{O}} p^{\rm p} \,\div\wt\bz \, \mc{J}_s \, d\bx \\
&= \int_{\mc{O}}\psi(\p_s p)^2 \, d\bx -\int_{\mc{O}}2\p_s\E(\bu^{\rm p}): \E(\wt\bz) \, d\bx - \int_{\mc{O}}2\E(\bu^{\rm p}): \E(\wt\bz) \, \mc{J}_s \, d\bx \\
&\quad  +  \int_{\mc{O}}2\E(\bu^{\rm p}): [\E(\cdot),\p_s]\wt\bz \, d\bx - \int_{\mc{O}} p^{\rm p} \, [\div,\p_s]\wt\bz \, d\bx  + \int_{\mc{O}} p^{\rm p} \,\div\wt\bz \, \mc{J}_s \, d\bx,
\end{align*}
where $\mc{J}_s \, d\bx$ is as in \eqref{jacfac_i} and we have used \eqref{wtzee_def}. Then, using that $\psi^2\le \psi$, we have
\begin{align*}
\norm{\psi\p_s p^{\rm p}}_{L^2(\mc{O})}^2 &\le 2\norm{\E(\p_s\bu^{\rm p})}_{L^2(\mc{O})} \norm{\E(\wt\bz)}_{L^2(\mc{O})} + 2\norm{[\E(\cdot),\p_s]\bu^{\rm p}}_{L^2(\mc{O})} \norm{\E(\wt\bz)}_{L^2(\mc{O})} \\
&\quad + C\norm{\E(\bu^{\rm p})}_{L^2(\mc{O})} \norm{\E(\wt\bz)}_{L^2(\mc{O})} + 2\norm{\E(\bu^{\rm p})}_{L^2(\mc{O})}\norm{ [\E(\cdot),\p_s]\wt\bz}_{L^2(\mc{O})} \\
&\quad +\norm{ p^{\rm p}}_{L^2(\mc{O})} \norm{ [\div,\p_s]\wt\bz}_{L^2(\mc{O})} + C\norm{p^{\rm p}}_{L^2(\mc{O})} \norm{\div\wt\bz}_{L^2(\mc{O})}\\
&\le C\big(\norm{\nabla(\p_s\bu^{\rm p})}_{L^2(\mc{O})} + \norm{\nabla\bu^{\rm p}}_{L^2(\mc{O})} +\norm{ p^{\rm p}}_{L^2(\mc{O})} \big) \norm{ \psi\p_s p^{\rm p}}_{L^2(\mc{O})} \\
&\le \delta \norm{ \psi\p_s p^{\rm p}}_{L^2(\mc{O})}^2 + C(\delta)\big(\norm{\nabla(\p_s\bu^{\rm p})}_{L^2(\mc{O})}^2 + \norm{\nabla\bu^{\rm p}}_{L^2(\mc{O})}^2 +\norm{ p^{\rm p}}_{L^2(\mc{O})}^2 \big)
\end{align*}
for $0<\delta\in \R$. Here we have used \eqref{jacfac_i}, \eqref{wtzee_def}, Proposition \ref{comm_ests}, and Young's inequality. Taking $\delta=\frac{1}{2}$ and using \eqref{ps_est}, we obtain
\[ \norm{\psi\p_s p^{\rm p}}_{L^2(\mc{O})} \le C\abs{\log\epsilon}^{1/2}\big( \norm{\nabla\bu^{\rm p}}_{L^2(\Omega_\epsilon)} +\norm{ p^{\rm p}}_{L^2(\Omega_\epsilon)} \big). \]
Then, using \eqref{Ocutoff}, within the region
\[ \mc{O}' =  \bigg\{\bx \in \Omega_{\epsilon} \; : \; \bx= \X(s) + \rho \be_{\rho}(s,\theta), \quad \epsilon < \rho<\frac{r_{\max}}{4} \bigg\}, \]
we have 
\begin{equation}\label{psp_est}
\norm{\p_s p^{\rm p}}_{L^2(\mc{O}')} \le C\abs{\log\epsilon}^{1/2}\big( \norm{\nabla\bu^{\rm p}}_{L^2(\Omega_\epsilon)} +\norm{ p^{\rm p}}_{L^2(\Omega_\epsilon)} \big)
\end{equation}
for $C$ depending only on $c_\Gamma$, $\kappa_{\max}$, and $\xi_{\max}$. \\

We can similarly use \eqref{ptheta_est} to show
\begin{equation}\label{pthetap_est}
\norm{\p_\theta p^{\rm p}}_{L^2(\mc{O}')} \le C\big( \norm{\nabla\bu^{\rm p}}_{L^2(\Omega_\epsilon)} +\norm{ p^{\rm p}}_{L^2(\Omega_\epsilon)} \big).
\end{equation}

%%%%%%%%%%%%%%%%%%%%%%%%%%%%%%%%%%%%%%%%%%%%%%%%%
%%%%%%%%%%%%%%%%%%%%%%%%%%%%%%%%%%%%%%%%%%%%%%%%%
%%%%%%%%%%%%%%%%%%%%%%%%%%%%%%%%%%%%%%%%%%%%%%%%%

Now we can use the tangential bounds \eqref{ps_est}, \eqref{ptheta_est}, \eqref{psp_est}, and \eqref{pthetap_est} to obtain an estimate for derivatives $\nabla(\p_\rho\bu^{\rm p})$ normal to $\Gamma_\epsilon$. For this, we will use the full Stokes equations \eqref{SB_PDE}, written with respect to the orthonormal frame $\be_t$, $\be_\rho$, $\be_\theta$ in $\mc{O}$ as
\begin{align*}
- \Delta \bu^{\rm p} +\nabla p^{\rm p} &= -\Delta \bu^{\rm p} + \frac{\p p^{\rm p}}{\p \rho}\be_{\rho} + \frac{1}{\rho}\frac{\p p^{\rm p}}{\p\theta}\be_{\theta} +
\frac{1}{1-\rho\wh\kappa}\bigg(\frac{\p p^{\rm p}}{\p s}-\kappa_3\frac{\p p^{\rm p}}{\p\theta}\bigg)\be_t =0 \\
\div \bu^{\rm p} &= \frac{1}{1-\rho\wh\kappa}\bigg(\frac{1}{\rho}\frac{\p(\rho (1-\rho\wh\kappa) u_{\rho})}{\p \rho}+\frac{1}{\rho}\frac{\p ((1-\rho\wh\kappa) u_{\theta})}{\p\theta} + \frac{\p u_s}{\p s} \bigg) = 0.
\end{align*}
Here $\wh\kappa$ is as in \eqref{kappahat} and we recall the notation $u_\rho = \bu^{\rm p}\cdot\be_\rho$, $u_\theta=\bu^{\rm p}\cdot\be_\theta$, $u_s=\bu^{\rm p}\cdot\be_t$. \\

From the divergence-free condition on $\bu^{\rm p}$, after multiplying through by $\rho (1-\rho\wh\kappa)$ and differentiating once with respect to $\rho$, we obtain
\begin{align*}
\norm{ \frac{\p^2 u_{\rho}}{\p^2 \rho}}_{L^2(\mathcal{O})} &\le C\bigg(\norm{\frac{1}{\rho} \nabla \bu^{\rm p} }_{L^2(\mathcal{O})} + \bigg\|\frac{1}{\rho}\bigg\|_{L^{\infty}(\mathcal{O})}|\mathcal{O}|^{1/3}\big\|\bu^{\rm p} \big\|_{L^6(\mathcal{O})} \\
&\hspace{2cm}+ \bigg\|\frac{1}{\rho}\frac{\p}{\p \rho}\bigg(\frac{\p u_{\theta}}{\p\theta}\bigg)\bigg\|_{L^2(\mathcal{O})} +\bigg\|\frac{\p}{\p \rho}\bigg(\frac{\p u_s}{\p s}\bigg)\bigg\|_{L^2(\mathcal{O})} \bigg)\\
&\le \frac{C}{\epsilon}\abs{\log\epsilon}^{1/2}\big( \|\nabla \bu^{\rm p}\|_{L^2(\Omega_\epsilon)} +\norm{ p^{\rm p}}_{L^2(\Omega_\epsilon)} \big),
\end{align*}
where we have used \eqref{ps_est} and \eqref{ptheta_est} along with the Sobolev inequality on $\Omega_\epsilon$. \\

Furthermore, using the $\be_\rho$ component of $-\Delta \bu^{\rm p} +\nabla p=0$, we have 
\begin{align*} 
\frac{\p p^{\rm p}}{\p \rho} &= (\Delta\bu^{\rm p}) \cdot\be_{\rho} \\
&= \frac{1}{\rho (1-\rho\wh\kappa)}\frac{\p}{\p \rho}\left(\rho (1-\rho\wh\kappa)\frac{\p \bu^{\rm p}}{\p \rho}\right)\cdot\be_{\rho} +\frac{1}{\rho^2(1-\rho\wh\kappa)}\frac{\p}{\p \theta}\bigg((1-\rho\wh\kappa)\frac{\p \bu^{\rm p}}{\p\theta}\bigg)\cdot\be_{\rho} \\
&\hspace{4cm} +\frac{1}{1-\rho\wh\kappa} \frac{\p}{\p s}\bigg( \frac{1}{1-\rho\wh\kappa}\bigg[ \frac{\p \bu^{\rm p}}{\p s}- \kappa_3\frac{\p \bu^{\rm p}}{\p \theta} \bigg] \bigg)\cdot\be_{\rho} \\
&= \frac{1}{\rho (1-\rho\wh\kappa)}\frac{\p}{\p \rho}\left(\rho (1-\rho\wh\kappa)\frac{\p u_{\rho}}{\p \rho}\right) +\frac{1}{\rho^2(1-\rho\wh\kappa)}\frac{\p}{\p \theta}\bigg((1-\rho\wh\kappa)\frac{\p \bu^{\rm p}}{\p\theta}\bigg)\cdot\be_{\rho} \\
&\hspace{4cm} +\frac{1}{1-\rho\wh\kappa} \frac{\p}{\p s}\bigg( \frac{1}{1-\rho\wh\kappa}\bigg[ \frac{\p \bu^{\rm p}}{\p s}- \kappa_3\frac{\p \bu^{\rm p}}{\p \theta} \bigg] \bigg)\cdot\be_{\rho}, 
\end{align*}
since $\be_{\rho}(s,\theta)$ does not vary with $\rho$. Therefore, using \eqref{ps_est}, \eqref{ptheta_est}, \eqref{psp_est}, and \eqref{pthetap_est}, along with the the bound on $\frac{\p^2 u_{\rho}}{\p \rho^2}$, we have
\[ \|\nabla p^{\rm p}\|_{L^2(\mathcal{O}')} \le \frac{C}{\epsilon}\abs{\log\epsilon}^{1/2}\big( \|\nabla \bu^{\rm p}\|_{L^2(\Omega_\epsilon)} +\norm{ p^{\rm p}}_{L^2(\Omega_\epsilon)} \big). \]

Finally, to estimate $\frac{\p^2 u_j}{\p \rho^2}$, $j=\theta,s$, we again use that 
\begin{align*} 
 \nabla p^{\rm p}\cdot\be_j &= (\Delta \bu^{\rm p})\cdot \be_j(s,\theta) \\
&= \frac{1}{\rho (1-\rho\wh\kappa)}\frac{\p}{\p \rho}\bigg(\rho (1-\rho\wh\kappa)\frac{\p u_j}{\p \rho}\bigg) +\frac{1}{\rho^2(1-\rho\wh\kappa)}\frac{\p}{\p \theta}\bigg((1-\rho\wh\kappa)\frac{\p \bu^{\rm p}}{\p\theta}\bigg)\cdot\be_j  \\
&\hspace{4cm}+\frac{1}{1-\rho\wh\kappa} \frac{\p}{\p s}\bigg( \frac{1}{1-\rho\wh\kappa}\bigg[ \frac{\p \bu^{\rm p}}{\p s}- \kappa_3\frac{\p \bu^{\rm p}}{\p \theta} \bigg] \bigg)\cdot\be_j , \quad j=\theta,s,
\end{align*}
since each of $\be_t(s)$, $\be_\rho(s,\theta)$ and $\be_\theta(s,\theta)$ are independent of $\rho$. Then we have
\begin{align*}
 \bigg\|\frac{\p^2 u_j}{\p \rho^2}\bigg\|_{L^2(\mathcal{O}')} &\le C \bigg( \bigg\|\frac{1}{\rho}\bigg\|_{L^{\infty}(\mathcal{O}')} \| \nabla \bu^{\rm p}\|_{L^2(\mathcal{O}')} + \bigg\|\frac{\p^2\bu^{\rm p}}{\p s^2}\bigg\|_{L^2(\mathcal{O}')} \\
 &\hspace{2cm} + \bigg\|\frac{\p^2\bu^{\rm p}}{\p s\p\theta}\bigg\|_{L^2(\mathcal{O}')} +\bigg\|\frac{\p^2\bu^{\rm p}}{\p \theta^2}\bigg\|_{L^2(\mathcal{O}')}+\|\nabla p^{\rm p}\|_{L^2(\mathcal{O}')} \bigg) \\
&\le  \frac{C}{\epsilon}\abs{\log\epsilon}^{1/2} \big(\norm{\nabla\bu^{\rm p}}_{L^2(\Omega_\epsilon)} + \norm{p^{\rm p}}_{L^2(\Omega_\epsilon)} \big), \quad j=\theta, s, 
 \end{align*}
 where $C$ depends only on $c_\Gamma$, $\kappa_{\max}$, and $\xi_{\max}$. Altogether, we obtain Lemma \ref{high_reg}. \hfill $\square$
 
 \begin{remark}  
We note that the factor of $\frac{1}{\epsilon}$ in Lemma \ref{high_reg} is necessary. As a heuristic, we consider an infinite straight cylinder of radius $\epsilon$ and take $\bu = (\frac{1}{\rho}-\frac{1}{\epsilon}) \be_{\theta}$, where $\be_{\theta}$ is now the (constant) angular vector in straight cylindrical coordinates, and $p\equiv$ constant. Ignoring decay conditions toward infinity along the cylinder, $(\bu,p)$ solves the Stokes equations with $\bu=0$ on the cylinder surface. Then
\begin{align*}
|\nabla^2\bu| =\bigg| \frac{\p^2}{\p \rho^2} \frac{1}{\rho}\bigg| = \bigg|\frac{2}{\rho^3}\bigg| = \frac{2}{\rho}\big| \nabla \bu \big|,
\end{align*}
and within the region $\epsilon < \rho \le 2 \epsilon$, we have $|\nabla^2\bu| \ge \frac{1}{\epsilon}|\nabla \bu|$. 
\end{remark}

\end{appendix}

%%%%%%%%%%%%%%%%%%%%%%%%%%%%%%%%%%%%%%%%%%%%%%%%%%%%%%%%%%%%%%
%%%%%%%%%%%%%%%%%%%%%%%%%%%%%%%%%%%%%%%%%%%%%%%%%%%%%%%%%%%%%%
\bibliography{rigid_bib.bib}{}
\bibliographystyle{abbrv}

%%%%%%%%%%%%%%%%%%%%%%%%%%%%%%%%%%%%%%%%%%%%%%%%%%%%%%%%%%%%%%
%%%%%%%%%%%%%%%%%%%%%%%%%%%%%%%%%%%%%%%%%%%%%%%%%%%%%%%%%%%%%%
%%%%%%%%%%%%%%%%%%%%%%%%%%%%%%%%%%%%%%%%%%%%%%%%%%%%%%%%%%%%%%
%%%%%%%%%%%%%%%%%%%%%%%%%%%%%%%%%%%%%%%%%%%%%%%%%%%%%%%%%%%%%%
%%%%%%%%%%%%%%%%%%%%%%%%%%%%%%%%%%%%%%%%%%%%%%%%%%%%%%%%%%%%%%

\end{document}